\documentclass[10pt]{article}

\usepackage{anyfontsize, fix-cm}
\usepackage[english]{babel}
\usepackage{amssymb, amsthm, mathtools, microtype, xcolor} 
\usepackage{enumitem}

\usepackage[pagebackref, pdfencoding=auto,  psdextra]{hyperref}
\usepackage[abbrev, nobysame]{amsrefs} 

\renewcommand{\MR}[1]{} 
\renewcommand{\PrintDOI}[1]{}

\usepackage{tikz-cd}
\usetikzlibrary{babel, shapes.geometric, math, calc}

\newtheorem{theorem}{Theorem}[section]
\newtheorem{lemma}[theorem]{Lemma}

\newtheorem{corollary}[theorem]{Corollary}
\newtheorem{proposition}[theorem]{Proposition}

\newtheorem{Definition}[theorem]{Definition}
\newtheorem*{singerconjecture}{Singer Conjecture for $W_L$}

\theoremstyle{definition}
\newtheorem*{example}{Example}

\theoremstyle{remark}
\newtheorem{remark}[theorem]{Remark}

\newcommand{\zz}{\Z}

\newcommand{\rr}{\mathbb{R}} 
\newcommand{\Z}{\mathbb{Z}}
\newcommand{\Q}{\mathbb{Q}}

\DeclareMathOperator{\Sub}{Sub}
\DeclareMathOperator{\Lk}{Lk}
\DeclareMathOperator{\Cone}{Cone}
\DeclareMathOperator{\St}{St}

\newcommand{\cU}{\mathcal{U}}
\newcommand{\cu}{\mathcal{U}}

\newcommand{\gs}{\sigma} 
\newcommand{\gS}{\Sigma} 
\newcommand{\gt}{\tau}  
\newcommand{\gG}{\Gamma}

\newcommand{\F}{\mathbb{F}}
\newcommand{\Fp}{\F_{p}} 

\renewcommand{\d}{\partial} 

\DeclarePairedDelimiter\abs{\lvert}{\rvert}

 \tikzset{ 
 	every node/.style={font=\scriptsize},
	vertex/.style={color = black, draw, fill = black, inner sep = 0, minimum size=2pt, shape=circle }, 
	hexagon/.style={color = black, shape=regular polygon, regular polygon sides=6, minimum size=3.5cm, draw, outer sep=0 }, 
	every pin edge/.style={<-,shorten <=1pt, thin }, 
	deg5/.pic={
	\coordinate (v1) at (2,2);
	\coordinate (v2) at (-2,2);
	\coordinate (v3) at (-2,-2);
	\coordinate (v4) at (2,-2);
	\coordinate (y1) at (0,-2);
	\coordinate (y2) at (0,2);
	\coordinate (x1) at (-2,0);
	\coordinate (x2) at (2,0);
	\coordinate (a2) at (2,1);
	\coordinate (a1) at (-2,1);
	\coordinate (b1) at (-1,-2);
	\coordinate (b2) at (-1,2);
	\coordinate (c1) at (-2,-1);
	\coordinate (c2) at (2,-1);
	\coordinate (d2) at (1,2);
	\coordinate (d1) at (1,-2);
	\coordinate (e) at (1.5,1.5);
       \node[vertex, label=above right:$v$] at (v1) {}; 
       \node[vertex, label=above left:$v$] at (v2) {}; 
       \node[vertex, label=below left:$v$] at (v3) {}; 
       \node[vertex, label=below right:$v$] at (v4) {}; 
       \node[vertex, label=above:$y$] at (y2) {}; 
       \node[vertex, label=below:$y$] at (y1) {}; 
       \node[vertex, label=right:$x$] at (x2) {}; 
       \node[vertex, label=left:$x$] at (x1) {}; 
       
       \node[vertex, label=right:$a$] at (a2) {}; 
       \node[vertex, label=left:$a$] at (a1) {}; 
       \node[vertex, label=above:$b$] at (b2) {}; 
       \node[vertex, label=below:$b$] at (b1) {}; 
       \node[vertex, label=left:$c$] at (c1) {}; 
       \node[vertex, label=right:$c$] at (c2) {}; 
	\node[vertex, label=above:$d$] at (d2) {}; 
       \node[vertex, label=below:$d$] at (d1) {}; 
       
      \node[vertex, label=left:$e$] at (e) {};          
       \draw (v1)--(v2)--(v3)--(v4)--(v1);

        	\draw (a1)--(b2);
         \draw (b1)--(c1);
         \draw (c2)--(d1);
        	 \draw (d2)--(a2);
	 \draw (e)--(v1);
	 }
 }

\author{Grigori Avramidi
\and Boris Okun
\and Kevin Schreve}
\title{Reflections on branched covers}

\begin{document}
\maketitle
\begin{abstract}
    A flag complex $L$ determines a locally CAT(0) cube complex $P_L$ with the links of all vertices isomorphic to $L$.
    The fundamental group of $P_L$ is the commutator subgroup of the right-angled Coxeter group $W_L$.
    We observe that an edge contraction $L \rightarrow L/e$ determines a $2$-fold branched cover $P_L \rightarrow P_{L/e}$ (up to a homotopy equivalence).
    We use this branched cover to give an inequality between the $\F_2$-$L^2$-Betti numbers of $W_L$ and $W_{L/e}$.
    This reduces an $\F_2$-version of the Singer Conjecture for right-angled Coxeter groups to minimal flag triangulations of even dimensional spheres.
    We also use this inequality to compute the $L^2$-Betti numbers of $W_L$ when $L$ is a flag triangulation of the projective plane or the torus.
\end{abstract}

\section{Introduction} A classical result of Alexander is that two simplicial complexes are PL homeomorphic if and only if they can be connected by a sequence of stellar subdivisions and their inverses.
Lutz and Nevo showed this result still holds for the subclass of flag simplicial complexes: two flag complexes are PL homeomorphic if and only if they can be connected by a sequence of edge subdivisions and their inverses with the intermediate simplicial complexes all flag \cite{ln16}.

Associated to any flag simplicial complex $L$ there is a right-angled Coxeter group $W_L$ and a proper cocompact action of $W_L$ on a CAT(0) cube complex, the Davis complex $\Sigma_L$.
The commutator subgroup $C_L$ is torsion-free and finite index in $W_L$, and the quotient $P_L := \Sigma_L/C_L$ is thus a compact locally CAT(0) cube complex.
The link of each vertex in $\Sigma_L$ is isomorphic to $L$, so if $L$ is a flag triangulation of $S^{n-1}$ then $P_L$ is a closed aspherical $n$-manifold.

If $L/e$ is the result of contracting an edge $e \in L$ (this is more general than inverting an edge subdivision), then we observe that $P_L$ is homotopy equivalent to a $2$-fold branched cover of $P_{L/e}$.
In this paper, we use this branched cover to relate certain homological invariants of $P_L$ and $P_{L/e}$.

We now describe the invariants.
The commutator $C_L$ is residually torsion-free nilpotent, so the group ring $\F C_L$ embeds into its Linnell skew field $D_{\F C_L}$.
The local homology of $P_L$ with $D_{\F C_L}$-coefficients then gives associated skew field Betti numbers:
\[
b_{*}(P_{L}; D_{\F C_L}):=\dim_{D_{\F C_L}} H_{*}(P_L; D_{\F C_L}) = \dim_{D_{\F C_L}} H_{*}(D_{\F C_L} \otimes_{\F C_{L}}C_{*}(\gS_L)).
\]
If $\F = \Q$, then these Betti numbers are equal to the classical $L^2$-Betti numbers of $\Sigma_L$ with respect to the $C_L$-action.
If $\F = \F_p$, then these Betti numbers can be viewed as finite field analogues of the $L^2$-Betti numbers, and we shall denote $b^{(2)}_{*}(W_{L}; \F) :=\frac{1}{[W_L:C_L]} b_{*}(\gS_{L}; D_{\F C_L})$, as they are well-defined invariants of $W_L$.

In \cite{aos25}, we showed that subdividing edges of $L$ gives an inequality between $\F$-$L^2$-Betti numbers for $W_L$ for all $\F$.
In particular, if $L$ is a flag complex and an edge $e \in L$ has $b_{i-1}^{(2)}(W_{\Lk e}; \F) = 0$, then $ b_i^{(2)}(W_{\Sub_eL}; \F) \le b_i^{(2)}(W_{L}; \F) $.

If a flag complex $L/e$ is obtained by contracting an edge $e \in L$, then we get a similar inequality, but only for the $\F_2$-$L^2$-Betti numbers.
\begin{theorem}\label{t:contraction}
    If $e$ is an edge in a flag complex $L$ such that $L/e$ is flag and $b^{(2)}_{*}(W_{\Lk e}; \F_{2})=0$, then
    \[
    b^{(2)}_{*}(W_{L} ; \F_{2} ) \leq b^{(2)}_{*}(W_{L/e}; \F_{2}).
    \]
\end{theorem}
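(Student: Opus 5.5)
The plan is to realize the edge contraction geometrically as a $2$-fold branched cover and then compare skew-field Betti numbers through a transfer argument. Write $e=\{s,t\}$ and let $v$ be the vertex of $L/e$ that $e$ collapses to. Define a homomorphism $\phi\colon W_L\to W_{L/e}$ by sending $s,t\mapsto v$ and every other generator to itself. This is well defined because adjacency in $L$ implies adjacency in $L/e$, and $L/e$ is flag. The map $\phi$ carries $C_L$ into $C_{L/e}$, and the induced map on abelianizations $(\Z/2)^{V(L)}\to(\Z/2)^{V(L/e)}$ has kernel of order $2$, generated by $st$. Geometrically, one builds a $\phi$-equivariant map $\Sigma_L\to\Sigma_{L/e}$ that descends to a map $P_L\to P_{L/e}$. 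Away from the cubes involving the deleted edge, this map is a $2$-fold covering. The branching happens along the subcomplex coming from $\St e$, and that subcomplex is built from copies of $P_{\Lk e}$ (times cubes), possibly after replacing $P_L$ by a homotopy equivalent complex. Concretely, the plan is to exhibit $P_L$, up to homotopy, as obtained from a genuine double cover $\widetilde X\to X$ of the complement $X=P_{L/e}\smallsetminus N(B)$ of a neighborhood of the branch locus $B$, glued back along $B$.

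Next, work with $\F_2$ coefficients and the field $D=D_{\F_2 C_{L/e}}$. Pull $D$ back to $P_L$ through $\phi$, or more precisely use the induced map $C_L\to C_{L/e}$ and the corresponding map of Linnell skew fields, which needs care. Over $\F_2$, a double cover $\widetilde X\to X$ gives a short exact sequence of local coefficient systems
\[
0\to D\to \pi_*D\to D\to 0,
\]
because the regular representation of $\Z/2$ over $\F_2$ is a nonsplit extension of the trivial module by itself. This is exactly why the argument only works at $p=2$: over other fields the transfer splits differently and no inequality in this direction follows. The long exact sequence then gives $b_i(\widetilde X)\le 2\,b_i(X)$. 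Finally, glue in the branch locus with Mayer--Vietoris. The branch locus and its boundary pieces are built from $P_{\Lk e}$, and by the hypothesis $b^{(2)}_*(W_{\Lk e};\F_2)=0$ they have vanishing skew-field homology. (This needs the induction/restriction property of Linnell-field Betti numbers for the subgroups $W_{\Lk e}\subset W_L$ and $W_{\Lk v}\subset W_{L/e}$, as in \cite{aos25}.) Hence $b_i(P_L)=b_i(\widetilde X)$ and $b_i(P_{L/e})=b_i(X)$. Normalizing by $[W_L:C_L]=2[W_{L/e}:C_{L/e}]$ converts $b_i(P_L)\le 2\,b_i(P_{L/e})$ into the stated inequality.

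The main obstacle is the coefficient comparison. One has to check that the homology of $P_L$ with coefficients in $D_{\F_2C_L}$ agrees with its homology with coefficients pulled back from $D_{\F_2 C_{L/e}}$ via the index-$2$ structure; equivalently, that the dimensions match the intrinsic invariant of $W_L$. I would first try to avoid the problem by working throughout with the common subgroup $\ker(W_L\to W_{L/e}^{ab})$, or with the kernel of the composite to $(\Z/2)^{V(L/e)}$, which has index $2$ in $W_L$ over $C_L$. I would then use that skew-field Betti numbers of residually torsion-free-nilpotent groups scale under finite-index subgroups and under the Hughes-free embeddings, taking the normalized invariants as well defined, as the paper asserts.
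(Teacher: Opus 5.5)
\textbf{There is a genuine gap, in the coefficient comparison you yourself flag.} The transfer step is fine: over $\F_2$, the filtration $0\to D\to D\otimes_{\F_2}\F_2[\Z/2]\to D\to 0$ is exactly the $p$-group monotonicity the paper uses. But you misdiagnose the last step. Your argument bounds $b_*(P_L;D)$ for $D=D_{\F_2C_{L/e}}$, pulled back along $\phi\colon C_L\to C_{L/e}$. You then look for an \emph{equality} with $b_*(P_L;D_{\F_2C_L})$. No equality is needed, and your suggested routes cannot produce one.

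\emph{Passing to $C_L\langle st\rangle$ fails.} The group $\phi^{-1}(C_{L/e})=C_L\langle st\rangle$ contains the involution $st$. So it is not torsion-free, has no Linnell skew field in this sense, and does not act freely on $\Sigma_L$; its quotient is $P_L/\langle st\rangle$.

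\emph{Hughes-free embeddings do not apply.} The identification $D_{\F G}\subset D_{\F H}$, and the resulting equality of Betti numbers, holds only for injective $G\to H$. But $C_L\to C_{L/e}$ kills, for example, $[a,st]$ for any vertex $a\notin\St e$.

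\emph{The missing ingredient.} What you need is the \emph{inequality} $b_i(X;D_{\F G})\le b_i(X;D_{\F H})$ for an arbitrary homomorphism $G\to H$ of residually torsion-free nilpotent groups (weak universality). It follows from the infimum formula $b_i(X;D_{\F G})=\inf_{X'\to X} b_i(X';\F)/\abs{X'\to X}$, because the finite covers pulled back from $BH$ are among those pulled back from $BG$. Applied to $C_L\to C_{L/e}$, it gives $b_*(P_L;D_{\F_2C_L})\le b_*(P_L;D_{\F_2C_{L/e}})$, which is exactly the direction you need.

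\textbf{The geometric step is also underdetermined.} The evident $\phi$-equivariant map $P_L\to P_{L/e}$ (induced by sending a simplex of $L$ to its image in $L/e$) is not a branched cover. On the square spanned by $s,t$ it is $(x_s,x_t)\mapsto\max(x_s,x_t)$, so it collapses the squares $[0,2]^2$ around $K_s\cap K_t$ onto intervals. Hence it is the base, not $P_L$, that must be replaced.

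\emph{The paper's construction.} It merges the two mirrors into $K_q=K_s\cup K_t$ and sets $Q_{L/e}=\cU((\Z/2)^{S/e},K'_L)=P_L/\langle st\rangle$. Over this, $P_L$ is an honest double cover branched along copies of $P_{\Lk e}$.

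\emph{Where flagness of $L/e$ enters.} Your proposal never identifies this; it is not needed for $\phi$ to be well defined, since $W_{L/e}$ depends only on the $1$-skeleton. In the paper it ensures that spherical subsets of $W_{L/e}$ are simplices of $L/e$. For each such simplex $q\gt$, the merged intersection $K_{s\gt}\cup_{K_{st\gt}}K_{t\gt}$ is contractible. Davis's criterion then makes $Q_{L/e}$ aspherical with $\pi_1=C_{L/e}$, hence homotopy equivalent to $P_{L/e}$.

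\emph{Smaller points.} The paper works with the pairs $(P_L,B)\to(Q_{L/e},B)$, whose relative chains are free over $\F_2[\Z/2]$; this replaces your Mayer--Vietoris with boundary pieces. Your route also works, since those pieces are products $S^1\times P_{\Lk e}$ with coefficients pulled back from $P_{\Lk e}$. Finally, vanishing on the branch locus comes from $C_{\Lk e}\hookrightarrow C_{L/e}$, which is injective because $\Lk e$ is full in $L/e$. It does not come from $W_{\Lk v}\subset W_{L/e}$.
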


We do not know an analogue of Theorem \ref{t:contraction} for $\F$-$L^2$-Betti numbers over other fields (in particular $\F = \Q$).

The theorem has some applications to the study of (generalizations of) the Singer conjecture, in particular in low dimensions.
Since $P_{L}$ is a closed aspherical manifold when $L$ is a flag PL triangulation of a sphere, we have:
\begin{singerconjecture}
    If $L$ is a flag PL triangulation of $S^{n-1}$, then
    \[
    b^{(2)}_{i}(W_L ; \Q ) = 0 \text{ for } i \ne \frac{n}{2}.
    \]
\end{singerconjecture}

Davis and the second author conjectured that if $L$ is a triangulation of a closed genus $g$ surface, then $b^{(2)}_{2}(W_L; \Q) = g$ and all other $\Q$-$L^{2}$-Betti numbers vanish \cite{do01}*{Conjecture 11.5.1}.
(If the surface is $S^{2}$ this is just the $3$-dimensional case of the Singer conjecture.) The conjectures can also be stated over $\F_{2}$, and the universal coefficients imply that the $\F_{2}$ versions are stronger than the usual ones.

For surfaces, since the links of edges are two points, the corresponding Coxeter group is $\F$-$L^{2}$-acyclic, so by Theorem~\ref{t:contraction}, contracting edges raises $\F_2$-$L^2$-Betti numbers.
Therefore, it suffices to show the conjecture for triangulations where no edges can be contracted so that $L/e$ is flag.
We call these \emph{minimal} flag triangulations, and the condition is equivalent to every edge being contained in a full $4$-cycle in $L$.

It is not hard to see that the only minimal flag triangulation of $S^2$ is the octahedron $O^2$.
Since $P_{O^2}$ is a $3$-torus and hence satisfies the $\F_2$-Singer conjecture, Theorem~\ref{t:contraction} implies the $\F_2$-Singer conjecture, and therefore the usual Singer conjecture, for all $W_{S^2}$.
This gives a new, and from our perspective, more elementary proof of the Singer Conjecture for $W_{S^2}$.
Previous proofs were by combining either geometrization in the form of Andreev's theorem \cites{a70,a70a} with a computation by Dodziuk of the $L^2$-cohomology of $\mathbb{H}^3$ \cite{d79}, or Agol's fibering criterion \cite{a08} with L\"{u}ck's mapping torus theorem~\cite{l94a}.

For the higher genus surfaces, it is known that there are only finitely many minimal flag triangulations in each genus \cites{mm92, mn95}.
Fisk, Mohar, and Nedela showed that the real projective plane has only five minimal flag triangulations \cite{fmn94}, and it is easy to see that the conjecture holds for them.
This way we obtain the following:
\begin{theorem}\label{t:introRP2}
    If $L$ is a flag triangulation of $\rr\mathrm{P}^{2}$, then $b^{(2)}_{2}(W_L ; \F) = 1/2$ and $b^{(2)}_{\neq 2}(W_L ; \F) = 0$.
\end{theorem}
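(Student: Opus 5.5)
Our plan combines Euler characteristic considerations with Theorem~\ref{t:contraction}. First we fix the value of the alternating sum. For a flag complex $L$, the orbifold Euler characteristic of $W_L$ is $\chi(W_L)=\sum_{\sigma}(-1/2)^{\dim\sigma+1}$, summed over all simplices including the empty one. With $f_0=V$, $f_1=E$, $f_2=F$, this gives $1 - V/2 + E/4 - F/8$. Since $L$ is a triangulated closed surface, $2E=3F$, and $V-E+F=\chi(\rr\mathrm{P}^2)=1$. Hence $F=2(V-1)$ and $E=3(V-1)$, so $\chi(W_L)=1-V/2+3(V-1)/4-(V-1)/4=1/2$. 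The $\F$-$L^2$-Betti numbers (being dimensions over a skew field of the homology of a finite free $C_L$-complex, divided by the index) satisfy $\sum(-1)^i b^{(2)}_i(W_L;\F)=\chi(W_L)=1/2$. It therefore suffices to show that $b^{(2)}_i(W_L;\F)=0$ for $i\neq 2$, and then $b_2=1/2$ is forced.

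Next come the vanishing of $b_0$ and of $b_3$ and above. Here $b_0=0$ because $C_L$ is infinite. Also $b_i=0$ for $i\ge 3$ since $\Sigma_L$ is $3$-dimensional, and $b_3=0$ because $H_3(P_L)$ with skew field coefficients vanishes: $P_L$ is a closed $3$-dimensional pseudomanifold with nonorientable links, so top-dimensional cycles would have to be supported on a locally finite $3$-cycle in $\Sigma_L$ with a nonorientable vertex link. Alternatively, one can argue via Poincaré–Lefschetz-type vanishing for $\F_2$. The remaining task is $b_1=0$.

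To handle $b_1$, we first work over $\F_2$. Since every edge link is two non-adjacent points, $W_{\Lk e}=D_\infty$, so $b^{(2)}_*(W_{\Lk e};\F_2)=0$. Theorem~\ref{t:contraction} then gives $b_1^{(2)}(W_L;\F_2)\le b_1^{(2)}(W_{L/e};\F_2)$ whenever $L/e$ is flag, and $L/e$ is again a flag triangulation of $\rr\mathrm{P}^2$. We iterate contractions until we reach a minimal flag triangulation, which by \cite{fmn94} is one of five explicit complexes. For each of these five, we must verify $b_1^{(2)}(W_{L_0};\F_2)=0$. This is the main obstacle. The first approach to try is to find, in each of the five, a vertex whose removal, or a decomposition into full subcomplexes glued along $\F_2$-$L^2$-acyclic pieces (cones, or suspensions of $D_\infty$-type links), that allows a Mayer–Vietoris argument to kill $b_1$. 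Alternatively, one can exhibit in each case a sequence of edge subdivisions, and use the inequality from \cite{aos25} to compare with a complex whose $b_1$ is known to vanish. Failing these, one computes directly: $L_0-\{v\}$ is a disk, and $W_{L_0}$ splits as an amalgam over $W_{\Lk v}\times\Z/2$ (with the link a cycle of length at least $4$, hence a virtual surface group with $b_1$ computable). The Mayer–Vietoris sequence then bounds $b_1$ by data about $W_{\text{disk}}$, which is $\F_2$-acyclic except possibly in degree $1$ for planar flag disks, a case handled by the same contraction argument reducing to a cone.

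Finally, we pass to arbitrary $\F$. Universal coefficients (the $\F_2$ vanishing implies $\Q$ vanishing, and more generally $\F_p$ compared with $\Q$) give $b_1^{(2)}(W_L;\F)\le b_1^{(2)}(W_L;\F_2)$ only in the direction from $\F_2$ to $\Q$. For odd $p$ we instead rely on $b_1^{(2)}(W_L;\F)\le$ the value obtained from $\Q$ via the structure of $H_1$ with its explicit dependence on the characteristic, or we repeat the Mayer–Vietoris computation above characteristic-free. Combined with the Euler characteristic, this yields $b_2=1/2$ over every $\F$.
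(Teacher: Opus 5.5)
Your overall frame matches the paper's. The Euler characteristic is $1/2$, $b_0$ and $b_3$ vanish, and $b_1$ over $\F_2$ is reduced via Theorem~\ref{t:contraction} to the five minimal triangulations of \cite{fmn94}. But the step that actually carries the proof, $b_1^{(2)}(W_{L_0};\F_2)=0$ for those five complexes, is left open, and the routes you list do not work as stated:
\begin{itemize}
\item Removing an open vertex star from $\rr\mathrm{P}^2$ leaves a M\"obius band, not a disk.
\item In $W_{L_0}=W_{L_0-v}*_{W_{\Lk v}}W_{\St v}$ the amalgamated subgroup is $W_{\Lk v}$, whose $b_1^{(2)}$ is positive once $\deg v\ge 5$.
\item Reducing a disk to a cone does not produce vanishing, and cones are not acyclic gluing pieces: $b^{(2)}_*(W_{\Cone K})=\tfrac12 b^{(2)}_*(W_K)$, so the cone on an $n$-cycle has $b_1^{(2)}=(n-4)/8$.
\item The inequality of \cite{aos25} only bounds $b_*(W_{\Sub_e L})$ above by $b_*(W_L)$. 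This cannot help, because a minimal complex is never an edge subdivision of a flag complex (undoing the subdivision would be an admissible contraction).
\end{itemize}
The missing idea is the seed argument. Each of the five complexes contains a $4$-cycle $C$ with $M(C)=L$, i.e.\ $L$ is obtained from $C$ by adding vertices one at a time, each with link meeting the current full subcomplex in a non-simplex. Since $W_C=D_\infty\times D_\infty$ is $L^2$-acyclic, and Mayer--Vietoris shows that adjoining such a vertex cannot create $b_1$, one gets $b_1^{(2)}(W_{L_0};\F)=0$. The only new contribution in that Mayer--Vietoris step is $b_0$ of the group of the link intersection, which is $0$ because that group is infinite; the gluing pieces need only be infinite, not acyclic.

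Your passage from $\F_2$ to an arbitrary field is also not justified. Universal coefficients only yield vanishing over $\Q$. Theorem~\ref{t:contraction} is only available over $\F_2$, so a characteristic-free Mayer--Vietoris computation on the five minimal complexes says nothing about a non-minimal flag $\rr\mathrm{P}^2$ over $\F_p$ with $p$ odd. The paper closes this with Lemma~\ref{l:fields}: $b_1^{(2)}(W_L;\Q)=0$ implies by Kielak's theorem that $W_L$ virtually algebraically fibers, and L\"uck's mapping torus theorem then gives $b_1^{(2)}(W_L;\F)=0$ for every $\F$.

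Finally, your justification of $b_3=0$ through nonorientable links fails precisely in characteristic $2$, where a mod $2$ fundamental class exists locally. The correct reason is that $P_L$ is a normal $3$-pseudomanifold, so every connected finite cover $X'$ has $b_3(X';\F)\le 1$. The infimum formula over covers of unbounded degree then gives $b_3=0$.
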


Mezler in his thesis \cite{m19} claims that an exhaustive computer search shows that there are $1196$ minimal flag triangulations of the torus.
For the higher genus, the exact numbers seem to be unknown.

The bulk of this paper is a combinatorial argument (avoiding case-by-case analysis of $1196$ triangulations!) showing the conjecture for the minimal triangulations of the torus.
We prove:
\begin{theorem}\label{t:itorii}
    If $L$ is a flag triangulation of $T^{2}$, then $b^{(2)}_{2}(W_L ; \F) = 1$ and $b^{(2)}_{\neq 2}(W_L ; \F) = 0$.
\end{theorem}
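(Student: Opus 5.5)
We reduce to minimal flag triangulations, bound those from above using the conjecture for the octahedron and other known cases, and recover the exact values from the Euler characteristic.

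\textbf{Euler characteristic.} For any flag triangulation $L$ of $T^2$, the $\F$-$L^2$-Euler characteristic equals the orbifold Euler characteristic of $W_L$:
\[
\chi(W_L)=\sum_{\sigma\in L}(-1/2)^{\dim\sigma+1}\cdot(-1)^{\dim \sigma+1}\ldots = 1-\frac{f_0}{2}+\frac{f_1}{4}-\frac{f_2}{8}.
\]
For a torus we have $f_1=3f_0$ and $f_2=2f_0$, so this is $1-\tfrac{f_0}{2}+\tfrac{3f_0}{4}-\tfrac{f_0}{4}=1$.

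\textbf{Vanishing outside degree $2$.} Since $P_L$ is a $3$-manifold-like complex of dimension $3$, we have $b_0^{(2)}=0$ because $W_L$ is infinite. Also $b_3^{(2)}=0$, since $L$ is not a sphere and so $H_3$ of the Davis complex vanishes. It then suffices to show $b_1^{(2)}(W_L;\F)=0$, after which $b_2^{(2)}=\chi=1$ follows.

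\textbf{Reduction to $\F_2$ and to minimal triangulations.} By universal coefficients, the vanishing of $b_1^{(2)}$ over $\F_2$ implies its vanishing over every $\F$, through the comparison $b_1^{(2)}(\,\cdot\,;\Q)\le b_1^{(2)}(\,\cdot\,;\F_2)$. Over $\F_p$ with $p$ odd we need a separate argument. I would use the fact, true for $\F_2$ and expected in general via subdivision, that all torus flag triangulations are connected by edge subdivisions through flag complexes (Lutz–Nevo). Combining this with the \cite{aos25} inequality, which applies since links of edges are $S^0$ and so $\F$-acyclic, reduces everything to a single triangulation, provided the inequality goes both ways. In fact that inequality only goes one way. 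So the real route is to handle $\F_2$ via Theorem~\ref{t:contraction}: the links of edges are $S^0$, $W_{S^0}$ is $D_\infty$, which is $L^2$-acyclic, and hence $b_1^{(2)}(W_L;\F_2)\le b_1^{(2)}(W_{L'};\F_2)$ for a minimal $L'$ obtained by successive flag contractions. It therefore suffices to prove $b_1^{(2)}(W_{L'};\F_2)=0$ for minimal flag tori, meaning those in which every edge lies in a full (induced) $4$-cycle.

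\textbf{Minimal tori (main obstacle).} Here I would use the other inequality, from \cite{aos25}: $b_1^{(2)}(W_{\Sub_e L})\le b_1^{(2)}(W_L)$. The plan is to show combinatorially that every minimal flag torus $L'$ can be reached, by subdividing edges and contracting edges in directions that only decrease $b_1^{(2)}$, from a complex known to be acyclic in degree $1$. Concretely, I would find inside $L'$ a full $4$-cycle that is essential, i.e.\ not null-homotopic on the torus. Cutting along it decomposes $W_{L'}$ as an amalgam over the special subgroup of that $4$-cycle, which is $D_\infty\times D_\infty$ and hence $L^2$-acyclic over every $\F$. This reduces the computation to an annulus, where $W$ of an annulus has vanishing $b_1^{(2)}$ by a Mayer–Vietoris argument over square (octahedral-type) pieces. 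Proving that every minimal flag torus contains such an essential full $4$-cycle, together with controlling the Mayer–Vietoris sequence, is the step I expect to be hardest. The natural way in is a counting argument on the $4$-cycles through each edge to rule out all of them being contractible. Once this is in place, the argument applies over every field $\F$ directly, with no need for the reduction to $\F_2$.
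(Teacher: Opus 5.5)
Your Euler characteristic computation and the reduction over $\F_2$ to minimal tori via Theorem~\ref{t:contraction} match the paper (edge links are $S^0$, and $W_{S^0}=D_\infty$). The proof still has two genuine gaps.

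\textbf{The minimal-torus step fails as stated.} An essential $4$-cycle $C$ does not separate the torus. So there is no decomposition $L=L_1\cup_C L_2$ into full subcomplexes, and $W_L$ is not an amalgam $W_{L_1}*_{W_C}W_{L_2}$. The annulus you get by cutting is not a subcomplex of $L$, since both of its boundary circles are the same cycle $C$. Hence there is no Mayer--Vietoris sequence reducing $b^{(2)}_1(W_L)$ to that of an annulus.

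What you actually need is a cover of $L$ by full subcomplexes to which Mayer--Vietoris applies. Two disjoint essential $4$-cycles are not enough in general, because an edge joining them can run through the ``other'' annulus and destroy fullness. The paper therefore uses one of two configurations:
\begin{itemize}
\item three pairwise disjoint essential $4$-cycles, which cut $L$ into three automatically full $4$-annuli (Lemma~\ref{l:ss});
\item a full seed $A$ (a $4$-annulus, pinched, or with a simplex) with $b^{(2)}_1(W_A)=0$ whose sated closure $M(A)$ is all of $L$.
\end{itemize}
Producing these configurations is the real combinatorial content. It uses Lemmas~\ref{l:M<L}, \ref{l:full annulus}, \ref{l:pinchedannulus} and a case split: a degree~$4$ vertex, a degree~$5$ vertex, or $6$-regular.

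By contrast, the step you flag as hardest is easy. In a minimal flag surface a trivial full $4$-cycle is a vertex link (Lemma~\ref{l:4-cycle}). Pushing across that vertex gives an essential $4$-cycle through every edge (Corollary~\ref{c:edge}). So your counting argument is unnecessary, and a single essential cycle is far from sufficient.

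Acyclicity of a $4$-annulus also needs a real argument rather than ``square pieces''. The paper cones off the two boundary $4$-cycles, which does not change $b^{(2)}_*$, to get a flag $S^2$, and then contracts it to the octahedron.

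\textbf{You never handle $\F_p$ with $p$ odd.} Universal coefficients only transfer vanishing from $\F_2$ to $\Q$. Your closing claim that the argument ``applies over every field directly'' is false. The reduction from an arbitrary $L$ to a minimal one uses Theorem~\ref{t:contraction}, which holds only over $\F_2$. The missing ingredient is Lemma~\ref{l:fields}:
\begin{itemize}
\item once $b^{(2)}_1(W_L;\Q)=0$, Kielak's theorem shows $W_L$ virtually algebraically fibres;
\item the mapping-torus theorem together with multiplicativity then gives $b^{(2)}_1(W_L;\F')=0$ for every field $\F'$.
\end{itemize}
Only after this does $b^{(2)}_2=\chi=1$ follow over every $\F$.

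A minor point: $b^{(2)}_3=0$ does not hold because ``$H_3$ of the Davis complex vanishes'', since $\Sigma_L$ is contractible for every $L$. It holds because $P_L$ is a $3$-dimensional pseudomanifold and the group is infinite.
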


In the last section, we discuss an application of Theorem~\ref{t:contraction} to the $\F_{2}$-Singer conjecture in higher dimensions.
Note that we have previously shown in \cite{aos21}, \cite{aos25} that for odd $p$, the $\F_p$-version of the Singer conjecture is false for certain $W_L$, where $L$ is a flag triangulation of $S^6$.
However, it is still possible that it holds over $\F_{2}$.
Using inductive arguments, we obtain the following:
\begin{theorem}
    The $\F_2$-Singer conjecture holds for all PL flag triangulations of spheres if and only if it holds for all minimal PL flag triangulations of even-dimensional spheres.
\end{theorem}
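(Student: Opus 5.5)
The plan is to argue by induction on the dimension $n-1$ of the sphere, with the inductive step driven by Theorem~\ref{t:contraction}. One direction is trivial, since minimal PL flag triangulations of even-dimensional spheres are in particular PL flag triangulations of spheres. For the converse, assume the $\F_2$-Singer conjecture holds for all minimal PL flag triangulations of even-dimensional spheres. We prove by induction on $d=n-1$ that it holds for every PL flag triangulation $L$ of $S^d$. The base cases are $d=0$ and $d=1$ (the latter being polygons, where $W_L$ is virtually a surface group and the computation is direct), together with the observation that $S^0$ is itself minimal.

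\textbf{Reduction to minimal triangulations.} Let $L$ be a flag PL triangulation of $S^{d}$.
\begin{enumerate}
\item If $L$ is not minimal, pick an edge $e$ with $L/e$ flag. Since $L$ is a PL sphere and $L/e$ is flag, the link condition holds (flagness of $L/e$ forces $\Lk e = \Lk u\cap \Lk v$), so $L/e$ is again a PL sphere of dimension $d$ with fewer vertices.
\item The link $\Lk e$ is a flag PL triangulation of $S^{d-2}$, so by induction it satisfies the $\F_2$-Singer conjecture. We need $b^{(2)}_*(W_{\Lk e};\F_2)=0$, which requires the middle degree not to survive.
\begin{enumerate}
\item When $d-2$ is even, $\dim P_{\Lk e}=d-1$ is odd, so the conjecture gives acyclicity, and Theorem~\ref{t:contraction} applies.
\item When $d-2$ is odd, the middle degree can be nonzero, and the inequality is not available directly.
\end{enumerate}
\end{enumerate}
This parity split is why only even-dimensional spheres appear. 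For $d$ even (so $n=d+1$ odd, and the conjecture asserts full acyclicity), every link $\Lk e$ is an even-dimensional sphere $S^{d-2}$, hence $L^2$-acyclic over $\F_2$ by induction. Then
\[
b^{(2)}_*(W_L;\F_2)\le b^{(2)}_*(W_{L/e};\F_2).
\]
Iterating contractions, which terminate since the vertex count drops, reduces to a minimal $L'$, where the hypothesis gives acyclicity. Hence $L$ is acyclic.

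\textbf{Odd-dimensional spheres.} For $d$ odd, $n=d+1$ is even, and we must show vanishing outside degree $n/2$. Here I would avoid contractions and instead use the known structure for odd $d$. The $\F_2$-Euler characteristic gives no constraint away from the middle, so I would use Poincaré duality for the manifold $P_L$, with $\F_2$ needing no orientation. This gives $b_i=b_{n-i}$, so it suffices to show vanishing for $i<n/2$.

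For this I would use the standard Mayer--Vietoris/deletion argument: write $L=(L-v)\cup \St v$ for a vertex $v$. Then $W_L$ is built as an amalgam of $W_{L-v}$ and $W_{\St v}=W_{\Lk v}\times \Z/2$ over $W_{\Lk v}$. Here $\Lk v$ is an even-dimensional sphere $S^{d-1}$, acyclic by the previous paragraph, so $W_{\St v}$ is acyclic. The resulting long exact sequence shows that $b^{(2)}_i(W_L)$ agrees with $b^{(2)}_i(W_{L-v})$ in low degrees. Since $L-v$ is a $d$-disk, one deletes vertices successively: by Alexander duality-type arguments, full subcomplexes of a PL $d$-sphere have nonvanishing homology only in a controlled range, and one shows vanishing below the middle degree.

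\textbf{Main obstacle.} This last step, the odd-dimensional case, is where I expect the difficulty. If the deletion argument does not close, the fallback is to prove the stronger inductive hypothesis used in \cite{aos25}: that full subcomplexes of PL spheres $S^{d}$ have $b^{(2)}_i(W_K;\F_2)=0$ for $i>(d+1)/2$ (a Singer-type statement for subcomplexes). One would then check that it propagates along both deletion and contraction, using Theorem~\ref{t:contraction} only when $d$ is even.
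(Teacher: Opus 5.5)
Your even-dimensional step is the paper's argument. Edge links in a flag PL $S^{2m}$ are flag PL $S^{2m-2}$, hence $\F_2$-acyclic by induction, so Theorem~\ref{t:contraction} lets you contract down to a minimal sphere.

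\textbf{The gap is the odd-dimensional step, which carries all the remaining content.} Write $b_i(W_K)$ for $b^{(2)}_i(W_K;\F_2)$.

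Your identity $b_*(W_L)=b_*(W_{L-v})$ uses that $\Lk_L v$ is acyclic, and it cannot be iterated. For $w\in\Lk_L v$, Mayer--Vietoris in the acyclic even sphere $\Lk_L w$ gives $b_i(W_{\Lk_L w-v})=\tfrac12 b_i(W_{\Lk_L(vw)})$. This is nonzero, for example, when $d=3$ and $\Lk_L(vw)$ is an $\ell$-gon with $\ell\ge 5$.

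More basically, full subcomplexes do not have vanishing below the middle: already $K=\{u\}$ has $b_0(W_K)=\tfrac12$. So no vertex-deletion induction can prove vanishing for $i<n/2$. The property that does pass to full subcomplexes is vanishing \emph{above} the middle, so after Poincar\'e duality you should aim at $i>n/2$ instead.

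Your fallback $(\mathrm{FS}_d)$ is the right invariant: $b_i(W_K)=0$ for $i>(d+1)/2$ and every full $K$ in a flag PL $S^d$. However, you neither verify it nor identify the correct mechanism.

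Mayer--Vietoris for $K=(K-v)\cup\St_K v$ gives $b_i(W_K)\le b_i(W_{K-v})+\tfrac12 b_i(W_{\Lk_K v})+b_{i-1}(W_{\Lk_K v})$, where $\Lk_K v$ is full in $\Lk_L v\cong S^{d-1}$. Hence $(\mathrm{FS}_{d-1})$ yields $b_i(W_K)=0$ for $i>d/2+1$.
\begin{itemize}
\item For $d$ odd, this is exactly $(\mathrm{FS}_d)$.
\item For $d$ even, it misses the degree $i=d/2+1$.
\end{itemize}
Theorem~\ref{t:contraction} does not fill this degree. It does not apply to a proper full subcomplex $K$, because $\Lk_K e$ is a full subcomplex of $S^{d-2}$ and need not be acyclic. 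So ``propagating along contraction'' is not available.

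The missing idea is to use the acyclicity of the ambient even sphere $L$, which your contraction step does provide.
\begin{enumerate}
\item Since $b_i(W_L)=0$, we get $b_i(W_K)\le b_{i+1}(W_L,W_K)$.
\item Add the vertices of $L-K$ back one at a time. This bounds $b_{i+1}(W_L,W_K)$ by a sum of terms $\tfrac12 b_{i+1}(W_B)+b_i(W_B)$, each with $B$ full in some $\Lk_L u\cong S^{d-1}$.
\item All these terms vanish for $i>d/2$ by $(\mathrm{FS}_{d-1})$.
\end{enumerate}
With this, $(\mathrm{FS}_d)$ holds for every $d$. For $d$ odd, taking $K=L$ and applying Poincar\'e duality gives the conjecture.

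The paper does not reprove any of this. It invokes the Davis--Okun Mayer--Vietoris argument \cite{do01} in the form of Lemma~12 of \cite{aos21}. That lemma turns a counterexample for a flag $S^{2n+1}$ into a counterexample for the link of a simplex, which is already excluded by induction.
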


\subsection*{Acknowledgements} We thank Karim Adiprasito for pointing out that the unsolvability of the sphere recognition problem implies the existence of infinitely many minimal flag triangulations of $S^n$,  $n \geq 5$.
This is explained in Remark \ref{r:1}.
The second author was partially supported by Simons Foundation grant MPS-TSM-00007773. 
The third author was partially supported by NSF grants DMS-2203325 and DMS-2505290.
No AI tool was used at any stage of this work.

\section{Edge contractions and branched covers}

Let $L$ be a flag simplicial complex.
Let $e = vw$ be an edge in $L$.
The \emph{edge subdivision} of $L$ along $e$, denoted $\Sub_e(L)$, is obtained by replacing the open star of $e$ with the cone on the suspended link of $e$.
The \emph{edge contraction} of $L$ along $e$, denoted $L/e$, is obtained by removing the interior of the union of stars $\St_L v \cup \St_L w$, and then coning off its boundary to a new vertex $q$.
If $L/e$ is itself a flag complex, then we say that $e$ is an \emph{admissible edge}.
If $L$ has no admissible edges, then we say $L$ is a \emph{minimal flag complex}.

Note that since $L$ is flag, the links of $v$ and $w$ intersect in the link of $e$, and hence the boundary of $\St_L v \cup \St_L w$ is $(\Lk_{L} v - w )\mathop{\cup}\limits_{\Lk_{L}e} (\Lk _{L}w - v)$.
Admissibility of $e$ is equivalent to the boundary being a full subcomplex of $L-e$, which in turn is equivalent to the boundary being full in $L$.

\subsection{Davis complexes}
\begin{Definition}
    A \emph{mirror structure} on a CW-complex $X$ consists of an index set $S$ and a family of subcomplexes $\{X_s\}_{s \in S}$.
    The subcomplexes $X_s$ are the \emph{mirrors} of $X$.
    We say $X$ is a mirrored space over $S$.
    Given $x \in X$, let $S(x) = \{s \in S \mid x \in X_s\}$.
\end{Definition}

Given a flag complex $L$ with vertex set $S$, let $K_L$ be the geometric realization of the poset of simplices of $L$.
Then $K_L$ is isomorphic to the cone on the barycentric subdivision of $L$, with the empty simplex corresponding to the cone point.
There is a canonical mirror structure on $K_L$ where the $s$-mirror $K_s$ is the geometric realization of the subposet of simplices containing the vertex $s$, which is isomorphic to the star of $s$ in the barycentric subdivision of $L$.
Let $W_L$ be the right-angled Coxeter group associated to $L$, and given a subset $T \subset S$, let $W_T$ be the right-angled Coxeter group with generators $t \in T$.
The \emph{Davis complex} $\Sigma_L$ is the basic construction
\[
\cu(W_L, K_L) := (W_L \times K_L) /\sim,
\]
where $(w,x) \sim (w',x')$ if and only if $x = x'$ and $wW_{S(x)} = w'W_{S(x)}$.

The Davis complex admits a natural cubical structure.
Firstly, $K_L$ can be naturally identified with a CAT(0) cubical subcomplex of $[0,1]^{S}$; it is precisely the union of subcubes of $[0,1]^{S}$ containing $(0,0, \dots, 0)$ corresponding to the collections of vertices which span simplices in $L$.
The link of the vertex $(0,0,\dots,0)$ is isomorphic to $L$, and the vertices of each cube can be identified with vertices of the barycentric subdivision of $L$.

For a vertex $s$, the mirror $K_s$ is the intersection of $K_L$ with the hyperplane $x_s = 1$.
It is naturally isomorphic to the cubical complex $K_{\Lk s}$.
This cubical structure naturally extends to $\Sigma_L$.

The commutator subgroup $C_L$ of $W_{L}$ has index $2^{\abs{S}}$ and acts freely on $\Sigma_L$; we will denote the quotient by $P_L$.
The abelianization $(\zz/2)^{S}$ acts by reflections on $P_L$ with strict fundamental domain $K_L$, in other words, $P_L$ can also be defined as the basic construction:
\[
P_L = \cu((\zz/2)^{S}, K_L),
\]
where the mirror structure on $K_L$ is the same as the above.

\subsection{Double branched covers} Now, suppose that $e=st$ is an admissible edge of $L$, $L/e$ the $e$-contraction of $L$, and $q$ the new vertex of $L/e$.
Define a mirror structure on $K_L$ with index set $S/e = S \cup \{q\} - \{s,t\}$ by letting $K_{q}$ be the union of $K_s$ and $K_t$, and keeping all other mirrors the same.
Denote the resulting mirrored complex by $K'_{L}$, see Figure~\ref{f:1}.
Let
\[
Q_{L/e} = \cu((\zz/2)^{S/e}, K'_L).
\]
Note that $Q_{L/e}$ is still a cube complex, but is no longer combinatorial.
In particular, cubes in $Q_{L/e}$ may meet along multiple faces.
\begin{figure}
    \centering
    \begin{tikzpicture}
        \begin{scope}[xshift = 3cm, yshift=-.82cm, scale=1.2]
            \draw[step=1cm] (0,0) grid (4,4);
            \foreach \x in {0,...,4} \foreach \y in {0,...,4}
            	\node[vertex] at (\x,\y) {};
            \foreach \x in {0.5,...,3.5} \foreach \y in {1,3}
            	\node[vertex] at (\x,\y) {};
            \node[above] at (.5, 1) {$K_s \cup K_t$};
            \node[below=0.16cm] at (2, 0) {$Q_{L/e} = \cu((\zz/2)^{S/e}, K'_L)$};
        \end{scope}
        \begin{scope}[yshift = 3cm]
            \node[regular polygon, regular polygon sides=5, minimum size=2 cm, draw, name=a] {};
            \foreach \i in {1,...,5} {
            	\draw (a.side \i) -- (a.center);
            	\node[vertex] at (a.corner \i) {}; 
	    }
            \node[above left] at (a.side 1) {$K_s$};
            \node[above right] at (a.side 5) {$K_t$};
            \node[below=0.2cm] at (a.side 3) {$K_L$};
        \end{scope}
        \begin{scope}
            \node[regular polygon, regular polygon sides=4, minimum size=2.3 cm, draw, name=a] {};
            \foreach \i in {side 2,side 3,side 4, 60, 120} 
            	\draw (a.\i) -- (a.center); 
            \foreach \i in {1,...,4} 
            	\node[vertex] at (a.corner \i) {}; 
            \node[vertex] at (a.side 1) {};
            \node[above] at (a.side 1) {$K_{s}\cup K_t$};
            \node[below=0.2cm] at (a.side 3) {$K'_L$};
        \end{scope}
    \end{tikzpicture}
    \caption{If $L$ is a $5$-cycle and we collapse an edge $e=st$, then $Q_{L/e}$ and $P_{L/e}$ are tori (with different cubical structures, and we have omitted the cubical structure on $Q_{L/e}$).
    The hyperbolic surface $P_L$ is a double branched cover of $Q_{L/e}$ at eight points, the $(\zz/2)^4$-orbit of $K_s \cap K_t$.}\label{f:1}
\end{figure}
\begin{lemma}\label{l:branched1}
    $P_L$ is a double branched cover of $Q_{L/e}$, where the branching locus is a disjoint union of complexes each isomorphic to $P_{\Lk e}$.
\end{lemma}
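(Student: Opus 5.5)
We compare the two basic constructions directly. Let $\phi\colon (\zz/2)^{S}\to(\zz/2)^{S/e}$ be the surjection sending $s,t\mapsto q$ and fixing the other generators; its kernel is the order-two subgroup generated by $st$. Define
\[
f\colon P_L=\cu((\zz/2)^S,K_L)\to Q_{L/e}=\cu((\zz/2)^{S/e},K'_L),\qquad [g,x]\mapsto[\phi(g),x].
\]
The first step is to check that $f$ is well defined. If $[g,x]=[g',x]$, then $g^{-1}g'$ lies in the subgroup generated by $S(x)$. Its image under $\phi$ lies in the subgroup generated by $\phi(S(x))$, which is exactly the set of mirrors of $K'_L$ containing $x$, because $K_q=K_s\cup K_t$. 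So $f$ is a well-defined, continuous, cellular map.

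Next, $f$ is $\phi$-equivariant and surjective. The deck involution is the action of $\tau=st\in\ker\phi$ on $P_L$, and $f\circ\tau=f$. We then compute the fibers. Let $S'(x)$ denote the $S/e$-mirror set of $x$. The preimage of $[h,x]$ is $\{[g,x]:\phi(g)=h\}$, which consists of the two points $[g,x]$ and $[g\,st,x]$. These two points coincide exactly when $st$ lies in the subgroup generated by $S(x)$. Since $S(x)$ spans a simplex of $L$, that subgroup is $(\zz/2)^{S(x)}$, so the points coincide iff $\{s,t\}\subseteq S(x)$, that is, iff $x\in K_s\cap K_t$. So over points outside the orbit of $K_s\cap K_t$ the map is two-to-one, and it is one-to-one on the orbit of $K_s\cap K_t$.

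To see that $f$ is a genuine branched cover, I would check that away from the branch locus it is a local homeomorphism. This is the main obstacle, because a point $x\in K_s\setminus K_t$ has $q\in S'(x)$, so the local model changes.
\begin{itemize}
\item \textbf{Neighbourhoods away from the branch locus.} Near such an $x$ in $Q_{L/e}$, the point has $2^{|S'(x)|}$ copies of a neighbourhood of $x$ in $K'_L$ glued together. In $P_L$ it has $2^{|S(x)|}$ copies, and $|S(x)|=|S'(x)|$ because exactly one of $s,t$ lies in $S(x)$. The group $\phi$ restricts to an isomorphism $(\zz/2)^{S(x)}\to(\zz/2)^{S'(x)}$, so the local charts match and $f$ is a local homeomorphism there.
\item \textbf{Neighbourhoods on the branch locus.} For $x\in K_s\cap K_t$, the group $\phi|_{(\zz/2)^{S(x)}}$ is two-to-one onto $(\zz/2)^{S'(x)}$ and identifies the copies indexed by $st$. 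This is precisely the local model of a double branched cover, namely $z\mapsto z^2$ in the $(s,t)$-quarter-plane directions: the four quadrants in $P_L$ map onto the two half-planes of $K_s\cup K_t$.
\end{itemize}

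Finally, we identify the branch locus. It is the image in $P_L$ of $(\zz/2)^S\times (K_s\cap K_t)$. By the cubical description, $K_s\cap K_t$ is the face $x_s=x_t=1$ of $K_L$. This face is isomorphic to $K_{\Lk e}$, since the simplices containing both $s$ and $t$ correspond to simplices of $\Lk e$ by flagness. Its mirror structure is induced by the vertices of $\Lk e$.

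The stabilizer of this face contains $\langle s,t\rangle$, and the mirrors meeting it are indexed by the vertex set $V$ of $\Lk e$. Each component of the branch locus is therefore $\cu((\zz/2)^{V},K_{\Lk e})=P_{\Lk e}$. The components are indexed by cosets of $(\zz/2)^{\{s,t\}\cup V}$, and distinct cosets give disjoint copies because the face is only glued along mirrors indexed by $V\cup\{s,t\}$. So the branch locus is a disjoint union of $2^{|S|-|V|-2}$ copies of $P_{\Lk e}$, which completes the proof.
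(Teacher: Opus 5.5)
Your proof is correct and follows the same route as the paper: the map $[g,x]\mapsto[\phi(g),x]$ induced by $s,t\mapsto q$, whose fibers $\{[g,x],[g\,st,x]\}$ collapse to a single point exactly when $x\in K_s\cap K_t$. Beyond what the paper writes out, you also check the local $z\mapsto z^2$ model along the branch locus and explicitly identify the branch locus as $2^{|S|-|V|-2}$ disjoint copies of $P_{\Lk e}$.
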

\begin{proof}
    Let $\phi: (\zz/2)^{S} \rightarrow (\zz/2)^{S/e}$ send the generators $s$ and $t$ to the generator $q$.
    This induces a $(\zz/2)^{S}$ action on $Q_{L/e}$, and the inclusion $K_L' \rightarrow Q_{L/e}$ sending $x$ to $[1,x]$ has $i(K_s)$ contained in the fixed set of $s$ for each generator $s$.
    Therefore, the identity map $K_L \rightarrow K_L'$ induces a continuous map $f: P_L \rightarrow Q_{L/e}$ which sends $[w,x]$ to $[\phi(w), x]$.

    The preimage of a point $[w,x]$ in $Q_{L/e}$ are the points $[w,x]$ and $[wst,x]$ in $P_L$.
    These points are the same if and only if $st \in W_{S(x)}$, i.e.
    if $x$ is in the intersection of mirrors $K_s \cap K_t$.
    Therefore, $f$ is a homeomorphism restricted to the orbit of $K_s \cap K_t$ in $P_L$, which is a disjoint union of copies of $P_{\Lk e}$, and a $\Z/2$-cover outside the orbit.
\end{proof}
\begin{lemma}\label{l:branched2}
    $Q_{L/e}$ is homotopy equivalent to $P_{L/e}$.
\end{lemma}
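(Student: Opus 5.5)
The plan is to compare the two mirrored spaces $K'_L$ and $K_{L/e}$ over the same index set $S/e$. Both give basic constructions for $(\zz/2)^{S/e}$, so it suffices to find a map of mirrored spaces $K'_L \to K_{L/e}$ that is a homotopy equivalence on every intersection of mirrors. The basic construction $\cu(G, -)$ is a homotopy colimit-type construction, so such a map induces a homotopy equivalence $Q_{L/e} = \cu((\zz/2)^{S/e}, K'_L)\to \cu((\zz/2)^{S/e},K_{L/e}) = P_{L/e}$. One can see this, for example, by writing both as unions of translates $gK$ glued along mirror intersections and applying the gluing lemma inductively over cells of the nerve.

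The key steps are as follows.

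\textbf{Step 1.} Record the mirror intersections. For $T\subseteq S/e$ with $q\notin T$, the intersection in $K'_L$ is $K_T$. This is nonempty iff $T$ spans a simplex of $L$, and then it is contractible, being a cone $K_{\Lk T}$. For $T\ni q$, set $T_0=T-q$. Then
\[ K'_T=(K_{T_0\cup s})\cup(K_{T_0\cup t}),\qquad \text{with intersection } K_{T_0\cup\{s,t\}}. \]
Each piece is either empty or contractible, so $K'_T$ is empty or contractible. It is nonempty iff $T_0\cup s$ or $T_0\cup t$ is a simplex of $L$, which says exactly that $T$ is a simplex of $L/e$. The union of two contractible complexes is contractible when their intersection is either contractible or empty-with-one-piece-empty. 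The bad case is when both pieces are nonempty but $T_0\cup\{s,t\}$ is not a simplex. Since $L$ is flag, that case means $T_0\cup s$ and $T_0\cup t$ are simplices while $T_0\cup\{s,t\}$ is not. This is impossible: $st$ is an edge and all other pairs are edges, so flagness forces $T_0\cup\{s,t\}$ to be a simplex. Admissibility is used for the analogous statement for $q\notin T$: a set $T\subseteq S-\{s,t\}$ is a simplex of $L/e$ iff it is one of $L$, and $T\cup q$ is a simplex of $L/e$ iff $T\cup s$ or $T\cup t$ is a simplex of $L$, using that $L/e$ is flag. So the nerves of the two mirror structures agree with $L/e$, and all nonempty intersections are contractible in both.

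\textbf{Step 2.} Deduce the homotopy equivalence. Since $K'_L$ and $K_{L/e}$ are mirrored spaces with the same nerve and contractible nonempty intersections, there is a map of mirrored spaces between them. One way to get it is to build it cell by cell over the barycentric subdivision of the nerve, exploiting contractibility. Alternatively, both are homotopy equivalent, compatibly with mirrors, to the cone on the barycentric subdivision of the nerve. Then apply the standard fact (Davis's book) that the basic construction depends up to equivariant homotopy only on this data, or equivalently that $\cu(G,K)\simeq \cu(G,\text{cone on nerve})$ when all mirror intersections are acyclic/contractible.

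I expect the main obstacle to be making this standard fact precise for the nonstandard, non-combinatorial mirror $K'_q$. The cleanest route is to present $\cu(G,K)$ as the colimit over the poset of pairs $(gW_T, T)$ of the spaces $K_T$ and check that it is a homotopy colimit, since the inclusions of subcomplexes are cofibrations. The equivalence then follows from the levelwise contractibility established in Step 1.
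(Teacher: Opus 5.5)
Your proof is correct, but it takes a different route from the paper's.

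\textbf{The paper's route.} It never compares the two chambers. It shows that the universal cover $\cU(W_{L/e},K'_L)$ is contractible by Davis's criterion (Theorem 9.1.4 of his book). Because $L/e$ is flag, this reduces to checking that $K'_\sigma$ is contractible for each simplex $\sigma\in L/e$. That check is exactly your Step 1, including the same use of flagness of $L$ to rule out the case where both pieces are nonempty but $T_0\cup\{s,t\}$ is not a simplex. The paper then notes $\pi_1(Q_{L/e})=C_{L/e}$, so $Q_{L/e}$ and $P_{L/e}$ are both $K(C_{L/e},1)$'s.

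\textbf{Your route.} You use homotopy invariance of the basic construction. Both spaces are colimits, over the same finite coset poset, of Reedy-cofibrant diagrams of subcomplex inclusions with contractible entries. Hence both are homotopy equivalent to the nerve of that poset.

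\textbf{What your route buys.} It gives an explicit equivalence, and you need not build the mirrored map cell by cell. The simplicial collapse $L\to L/e$, $s,t\mapsto q$, is order-preserving on simplices and sends simplices containing $s$ or $t$ to ones containing $q$. It therefore induces a map of mirrored spaces $K'_L\to K_{L/e}$. Your route also avoids both the $\pi_1$ computation, which the paper only sketches, and any appeal to asphericity.

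It further shows that flagness of $L/e$ plays no role in the equivalence itself. The identification of the nerve of $K'_L$ with $L/e$ follows from the definition of the contraction alone, so your remark attributing that step to admissibility is misplaced. Admissibility is needed only to identify $\cU((\zz/2)^{S/e},K_{L/e})$ with $\Sigma_{L/e}/C_{L/e}$. In the paper's argument, by contrast, it is essential for applying Davis's criterion.

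\textbf{The cost.} You must justify the homotopy-colimit step, which you correctly flag as the main point and which the paper sidesteps by citation.
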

\begin{proof}
    We will show that $Q_{L/e}$ is aspherical and $\pi_1(Q_{L/e}) = C_{L/e}$.
    The universal cover of $Q_{L/e}$ is the basic construction $\cU(W_{L/e}, K'_L)$, with the same mirror structure on $K'_L$.
    To show $\cU(W_{L/e}, K'_L)$ is contractible, since $L/e$ is flag it suffices to show that for each simplex $\gs \in L/e$, $K'_{\gs} = \bigcap_{s \in \gs} K'_s$ is contractible \cite{dbook}*{Theorem 9.1.4}.
    Note that an intersection of mirrors in the Davis chamber $K_{L}$ is either contractible or empty, depending on whether it corresponds to a simplex of $L$ or not.

    If $q \notin \gs$, then $K'_{\gs}=K_{\gs}$ is unchanged, and hence contractible.
    Therefore, suppose $q \in \gs$.
    Let $\gt = \gs - v_{e}$, we have
    \[
    K'_{\gs} = K'_{q} \cap K'_{\gt} = (K_s \mathop{\cup}\limits_{K_{st}} K_t) \cap K_{\gt} = K_{s \gt} \mathop{\cup}\limits_{K_{st\gt}} K_{t \gt}.
    \]
    Since
    \[
    \Lk_{L/e} q= (\Lk_{L}s - t) \mathop{\cup}\limits_{\Lk_{L}e} (\Lk_{L}t - s)
    \]
    and $\gt \in \Lk_{L/e} q$, if $\gt \notin \Lk e$, then exactly one of the terms in the above decomposition of $K'_{\gs}$ is empty and the other is contractible.
    If $\gt \in \Lk e$, then both terms and their intersection are contractible.
    Therefore, in both cases the union is contractible.

    The fundamental group of $Q_{L/e}$ can be computed just as the fundamental group of $P_{L/e}$, and is equal to the commutator subgroup $C_{L/e}$.
\end{proof}

\section{\texorpdfstring{$L^2$-Betti numbers of branched covers}{L2-Betti numbers of branched covers}}

Suppose $G$ is a residually torsion-free nilpotent group, and thus bi-orderable.
Therefore, $\F G$ embeds into a Malcev--Neumann skew field $D_<\F G$, and the division closure $D_{\F G}$ is independent of the choice of ordering.
If $X$ is a finite connected complex and $\pi_1X \rightarrow G$ is a homomorphism, then $D_{\F G}$ is a right $\pi_{1}X$-module and we can take homology of $X$ with local coefficients in $D_{\F G}$.
There are associated Betti numbers
\[
b_i(X; D_{\F G}):= \dim_{D_{\F G}}H_i(X; D_{\F G}) = \dim_{D_{\F G}}H_i( D_{\F G} \otimes_{\F \pi_1(X)} C_{*}(\widetilde X) ).
\]
If $X = \bigsqcup X_k$ is disconnected and each component has a map $\pi_{1} X_{k} \to G$, then we put
\[
b_i(X; D_{\F G}):= \sum b_i(X_{k}; D_{\F G}).
\]

Recall the following facts about these Betti numbers.
\begin{itemize}
    \item (Infimum formula) \cite{aos24}*{Theorem 3.6}
    \[
    b_i(X; D_{\F G}) = \inf_{X' \rightarrow X} \frac{b_i(X'; \F)}{\abs{X' \rightarrow X}},
    \]
    where $X'$ ranges over all finite index covers of $X$ pulled back from $BG$.

    \item (Weak universality) The infimum formula implies that if $H$ is also residually torsion-free nilpotent and $G \to H$ is a homomorphism, then
    \[
    b_i(X; D_{\F G}) \leq b_i(X; D_{\F H}).
    \]
    Moreover, if $G \to H$ is injective, then $D_{\F G} \subset D_{\F H}$, so
    \[
    D_{\F H} \otimes_{\F \pi_1(X)} C_{*}(\widetilde X) = D_{\F H} \otimes_{D_{\F G}} (D_{\F G} \otimes_{\F \pi_1(X)} C_{*}(\widetilde X)),
    \]
    and
    \[
    b_i(X; D_{\F G}) = b_i(X; D_{\F H}),
    \]
    since tensoring over skew fields is exact.

    \item (Universal coefficients) The infimum formula and the universal coefficient theorem imply that for any field $\F$,
    \[
    b_i(X; D_{\Q G}) \leq b_i(X; D_{\F G}).
    \]

    \item (Monotone in $p$-covers) \cite{blls14}*{Lemma 4.1 and Theorem 1.6} If $\gG$ is a $p$-group and $C_{*}$ is a chain complex of finitely generated free $\F_p\gG$-modules, then
    \[
    \dim_{\F_p}H_i(C_{*}) \le \abs{\gG}\dim_{\F_p} H_i(\F_p \otimes_{\F_{p}\gG} C_{*}).
    \]
    In particular, if $X'$ is a regular $p^r$-cover of $X$, then $b_i(X'; \F_p) \le p^rb_i(X; \F_p)$, and the infimum formula implies that $b_{*}(X'; D_{\F_{p} G}) \leq p^{r} b_{*}(X; D_{\F_{p} G})$.

    \item (Multiplicativity) \cite{aos24}*{Lemma 3.3} If $G'$ is a finite index subgroup of $G$ and $X'$ is the pullback of the cover $BG' \to BG$ to $X$, then
    \[
    b_i(X'; D_{\F G'}) =\abs{G:G'} b_i(X; D_{\F G}).
    \]
\end{itemize}
\begin{theorem}\label{t:branched}
    Suppose $X$ is a finite complex, and $X'$ is a degree $p^r$ regular branched cover of $X$.
    Suppose that the components $B_i$ of the branching locus $B$ are $\pi_1$-injective subcomplexes of $X$.
    Suppose that $X$ and $X'$ both have residually torsion-free nilpotent fundamental groups $G$ and $G'$ respectively, and $b_{*}(B_i; D_{\F_p \pi_{1}B_i}) = 0$.
    Then
    \[
    b_{*}(X'; D_{\F_p G'}) \leq p^{r} b_{*}(X; D_{\F_p G}).
    \]
\end{theorem}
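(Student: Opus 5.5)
Our plan is to compare both spaces with the unbranched parts and use Mayer--Vietoris, since the branching locus is acyclic with these coefficients. Let $N(B)$ be a regular neighborhood of $B$ in $X$, which deformation retracts to $B$, and let $Y = X - \mathrm{int}\,N(B)$. Write $B'$ for the preimage of $B$ in $X'$ (mapped homeomorphically onto $B$), $N(B')$ for the preimage of $N(B)$, and $Y'$ for the preimage of $Y$. Then $Y' \to Y$ is a genuine regular $p^r$-cover, and the preimage of $\partial N(B)$ is a regular cover of $\partial N(B)$.

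All Betti numbers will be taken with coefficients pulled back along $\pi_1 X' \to G'$ and $\pi_1 X \to G$. We first show that $B$, $B'$, and the pieces of $N(B)$, $N(B')$ contribute nothing. By hypothesis $b_*(B_i; D_{\F_p \pi_1 B_i})=0$. Since $B_i$ is $\pi_1$-injective in $X$, the map $\pi_1 B_i \to G$ is injective. Residual torsion-free nilpotence passes to subgroups, so weak universality gives $b_*(B_i; D_{\F_p G}) = 0$.

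For $B'$ the same argument should work, with one more point to check. Each component $B'_i \cong B_i$, and $\pi_1 B'_i \to G'$ composed with $G' \to G$ is the map $\pi_1 B_i\to G$. That composite is injective, so $\pi_1 B'_i \to G'$ is injective too, and again $b_*(B'; D_{\F_p G'})=0$.

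The main obstacle is that Mayer--Vietoris needs the boundary pieces $\partial N(B)$, and their preimages, to be acyclic as well, not just $B$. To avoid this, we would instead use the long exact sequences of the pairs $(X', B')$ and $(X, B)$, which only involve $B$ and $B'$. The vanishing above gives
\[
b_*(X'; D_{\F_p G'}) = b_*(X', B'; D_{\F_p G'}), \qquad b_*(X; D_{\F_p G}) = b_*(X, B; D_{\F_p G}).
\]
So it suffices to prove $b_*(X',B'; D_{\F_pG'}) \le p^r\, b_*(X,B;D_{\F_pG})$.

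For the relative groups we use cellular chains. Choose cell structures so that the cover $X'\to X$ is cellular. Then $C_*(X',B')$ is a chain complex of free $\F_p\Gamma$-modules, where $\Gamma$ is the deck group of order $p^r$: every relative cell lies off the branching locus and so has $p^r$ lifts permuted freely. This holds at every finite-index level. Take a finite-index cover $\hat X\to X$ pulled back from $BG$ and the corresponding cover $\hat X'$ pulled back from $BG'$ via $G'\to G$; there are still free $\Gamma$-orbits on the relative cells. The $p$-group inequality then gives
\[
b_i(\hat X', \hat B'; \F_p) \le p^r\, b_i(\hat X, \hat B; \F_p).
\]

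The delicate step is matching covers. We would run the infimum formula in its relative form, which we would justify by the same argument as in \cite{aos24}. It expresses $b_i(X',B';D_{\F_pG'})$ as an infimum over covers pulled back from $BG'$, and those pulled back from $BG$ form a subfamily. This may give only $b(X',B'; D_{\F_pG}) \le p^r b(X,B;D_{\F_pG})$, with $G'$ acting through its image in $G$. Weak universality then closes the gap, since $b(X',B';D_{\F_pG'}) \le b(X',B';D_{\F_pG})$ via $G'\to G$ (apply it relatively, or first pass to absolute groups using the vanishing on $B'$). Combining these yields the stated inequality.
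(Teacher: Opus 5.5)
Your argument is correct and is essentially the paper's proof. You kill $B$ via $\pi_1$-injectivity and weak universality, use the long exact sequences of pairs to pass to relative Betti numbers, apply $p$-group monotonicity to the relative chains (on which the deck group acts freely), and finish with weak universality along $G'\to G$. The paper streamlines this by using $D_{\F_p G}$-coefficients on both $X$ and $X'$ from the start, so it needs neither your separate injectivity check for $\pi_1 B'_i\to G'$ nor the $D_{\F_p G'}$-version of the pair sequence; your option of ``first passing to absolute groups'' is exactly the paper's chain.
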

\begin{proof}
    Denote $D = D_{\F_p G}$.
    As the components of $B$ are $\pi_1$-injective, by weak universality, $b_{*}(B; D) = \sum_i b_{*}(B_i; D_{\F_p \pi_{1}B_i}) = 0$.
    The long exact sequences of pairs $(X,B)$ and $(X',B)$ with coefficients in $D$
    \[
    \begin{tikzcd}[sep = small] \dots \arrow[r] &H_{*}(B; D) \arrow[r] & H_{*}(X; D) \arrow[r] &H_{*}(X, B; D) \arrow[r] &\dots\\
        \dots \arrow[r] &H_{*}(B; D) \arrow[r] & H_{*}(X'; D) \arrow[r] &H_{*}(X', B; D) \arrow[r] & \dots
    \end{tikzcd}
    \]
    show that $b_{*}(X; D) = b_{*}(X, B; D)$ and $b_{*}(X'; D) = b_{*}(X', B; D)$.
    The pair $(X', B)$ is a regular $p^{r}$-cover of $(X, B)$, so monotonicity implies that $b_{*}(X'; D) \leq p^{r} b_{*}(X; D)$.
    By weak universality, $b_{*}(X'; D_{\F_p G'}) \leq b_{*}(X'; D)$, hence the claim.
\end{proof}

Given a flag complex $L$, the commutator subgroup $C_L$ of $W_L$ is residually torsion-free nilpotent, and $[W_L:C_L] = 2^{|S|}$.
We define $b^{(2)}_{*}(W_{L}; \F) :=\frac{1}{2^{|S|}} b_{*}(P_{L}; D_{\F C_L})$.
\begin{theorem}\label{t:inequality}
    If an admissible edge $e$ in a flag complex $L$ has $b^{(2)}_{*}(W_{\Lk e}; \F_{2})=0$, then
    \[
    b^{(2)}_{*}(W_{L} ; \F_{2}) \leq b^{(2)}_{*}(W_{L/e}; \F_{2}).
    \]
\end{theorem}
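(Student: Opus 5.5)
We apply Theorem~\ref{t:branched} with $p=2$, $r=1$, $X = Q_{L/e}$ and $X' = P_L$, using the branched cover $f\colon P_L \to Q_{L/e}$ of Lemma~\ref{l:branched1}.

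First we check the hypotheses. The map $f$ is the quotient by the deck involution $st$: the action of $s t\in(\zz/2)^S$ commutes with $f$, since $\phi(st)=q^2=1$. So $f$ is a regular double branched cover, and its branching locus $B$ is the $(\zz/2)^{S}$-orbit of $K_s\cap K_t$. This orbit is a disjoint union of copies of $P_{\Lk e}$. The groups $G' = \pi_1 P_L = C_L$ and $G=\pi_1 Q_{L/e}= C_{L/e}$ (Lemma~\ref{l:branched2}) are residually torsion-free nilpotent, being commutator subgroups of right-angled Coxeter groups. For $\pi_1$-injectivity of each component $B_i\cong P_{\Lk e}$, we view $B_i$ inside $P_L$ as a vertex-link type subcomplex, the fixed set of $s$ and $t$ near the chamber. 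Concretely, $\Lk e$ is a full subcomplex of $L$, since $L$ is flag, and its image in $L/e$ is $\Lk_{L/e}q \cap (\text{stuff})$, a full subcomplex. Hence $W_{\Lk e}\to W_{L/e}$ is injective, as a special subgroup, and on commutator subgroups $C_{\Lk e}\hookrightarrow C_{L/e}$. I would realize this geometrically: in the universal cover $\cU(W_{L/e},K'_L)$, the lift of $B_i$ is $\cU(W_{\Lk e}, K_{st})$, and the induced map on fundamental groups is the restriction of the special subgroup inclusion. Finally, $b_*(B_i; D_{\F_2\pi_1 B_i}) = 2^{|V(\Lk e)|}\, b^{(2)}_*(W_{\Lk e};\F_2)=0$ by hypothesis.

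Theorem~\ref{t:branched} then gives
\[
b_*(P_L; D_{\F_2 C_L}) \le 2\, b_*(Q_{L/e}; D_{\F_2 C_{L/e}}).
\]
By Lemma~\ref{l:branched2}, together with homotopy invariance of twisted homology (the identification of $\pi_1$ is compatible), the right side equals $2\, b_*(P_{L/e}; D_{\F_2 C_{L/e}})$. Dividing by $2^{|S|}$ and using $|S/e| = |S|-1$ yields
\[
b^{(2)}_*(W_L;\F_2)\le \frac{2}{2^{|S|}}\, b_*(P_{L/e};D_{\F_2C_{L/e}}) = b^{(2)}_*(W_{L/e};\F_2).
\]

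The main obstacle is the $\pi_1$-injectivity of the branching components in $Q_{L/e}$, and making sure the coefficient systems match. The vanishing hypothesis is given in terms of $C_{\Lk e}$ acting on $P_{\Lk e}$, whereas Theorem~\ref{t:branched} uses $D_{\F_2\pi_1B_i}$; these agree only if $\pi_1 B_i \cong C_{\Lk e}$ and it injects into $C_{L/e}$. I would prove the injectivity by exhibiting a retraction. The special subgroup $W_{T}$, with $T$ the vertex set of $\Lk e$ in $L/e$, is a retract of $W_{L/e}$ (kill the other generators), and this retraction restricts to commutator subgroups. Checking that the geometric inclusion of $B_i$ induces exactly this subgroup inclusion, with $q$ playing no role, requires some care with the mirror structure on $K'_L$.
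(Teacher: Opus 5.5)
Your proposal is correct and is essentially the paper's proof: apply Theorem~\ref{t:branched} with $p=2$, $r=1$ to the double branched cover $P_L \to Q_{L/e}$ of Lemma~\ref{l:branched1}, replace $Q_{L/e}$ by $P_{L/e}$ via Lemma~\ref{l:branched2}, and divide by $2^{\abs{S}}$ using $\abs{S/e}=\abs{S}-1$. Your checks of the hypotheses, which the paper leaves implicit, are sound: regularity via the deck involution $st$, and $\pi_1$-injectivity of each branch component, which lifts to a contractible copy of $\Sigma_{\Lk e}$ in $\cU(W_{L/e},K'_L)$ with stabilizer $C_{L/e}\cap(W_{\Lk e}\times\langle q\rangle)=C_{\Lk e}$.
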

\begin{proof}
    Combining Lemma \ref{l:branched1}, Lemma \ref{l:branched2}, and Theorem \ref{t:branched}, we get
    \[
    b^{(2)}_{*}(W_{L} ; \F_{2}) = \frac{1}{2^{\abs{S}}} b_{*}(P_{L}; D_{\F_2C_{L}}) \le \frac{2}{2^{\abs{S}}} b_{*}(P_{L/e}; D_{\F_2C_{L/e}}) = b^{(2)}_{*}(W_{L/e} ; \F_{2}),
    \]
    since $\abs{S/e} = \abs{S}-1$.
\end{proof}

\section{Minimal flag triangulations of surfaces}

Let $L$ be a simplicial complex.
An \emph{$n$-cycle} in $L$ is a subcomplex with $n$ vertices homeomorphic to a circle.
A cycle is \emph{trivial} if it is nullhomotopic in $L$, and \emph{nontrivial} otherwise.
Note that if $L$ is flag, then a nontrivial $4$-cycle is automatically full.
An \emph{annulus} in $L$ is a subcomplex homeomorphic to a planar region between two disjoint nested circles.
A \emph{pinched annulus} in $L$ is a subcomplex homeomorphic to a planar region between two tangent nested circles.
An \emph{edge-pinched annulus} in $L$ is a subcomplex homeomorphic to a planar region between two nested circles sharing an arc.
An \emph{annulus with a simplex} in $L$ is a subcomplex which is the union of an annulus with one extra simplex, obtained by choosing a vertex in one boundary component and joining it with a simplex in the other component.
A \emph{$4$-annulus} (pinched, edge-pinched, with a simplex) is a homotopically nontrivial annulus (pinched, edge-pinched, with a simplex, respectively) with $4$-cycle boundary components.

We will often use the following well-known characterization of admissible edges.
\begin{lemma}
    An edge $e$ in a flag complex $L$ is admissible if and only if it is not contained in a full $4$-cycle.
\end{lemma}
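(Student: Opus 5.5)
We prove both directions by working with the criterion stated earlier: $e=st$ is admissible if and only if the boundary $\d := (\Lk_L s - t)\cup_{\Lk_L e}(\Lk_L t - s)$ of $\St_L s\cup \St_L t$ is a full subcomplex of $L$ (equivalently of $L-e$). Since $L/e$ is obtained by coning $\d$ off to the new vertex $q$, it is flag if and only if every minimal non-face of $L/e$ has two elements. The only new minimal non-faces can involve $q$, or can come from $\d$ failing to be full. So the task is to show that a failure of fullness of $\d$ is the same as the existence of a full $4$-cycle through $e$.

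\textbf{If $e$ lies in a full $4$-cycle, then $e$ is not admissible.} Let the full $4$-cycle be $s,t,x,y$, with edges $st,tx,xy,ys$. Fullness means $sx$ and $ty$ are not edges. Then $y\in \Lk_L s - t$ and $x\in\Lk_L t - s$, so both $x,y$ lie in $\d$. Neither lies in $\Lk_L e$, because $y$ is not adjacent to $t$ and $x$ is not adjacent to $s$. In $L/e$ the vertices $q,x,y$ are pairwise adjacent: $qx$ and $qy$ come from the cone, and $xy$ is an edge of $L$ that survives because it lies outside $\St s\cup\St t$ or on $\d$. However, $\{q,x,y\}$ spans a simplex of $L/e$ only if the edge $xy$ lies in $\d$, that is, only if $xy$ lies in $\Lk s$ or in $\Lk t$. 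That would force $x\sim s$ or $y\sim t$, a contradiction. So $L/e$ contains an empty triangle and is not flag.

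\textbf{If $e$ is not admissible, then $e$ lies in a full $4$-cycle.} Suppose $\d$ is not full in $L$. Take a simplex $\sigma$ of $L$ whose vertices all lie in $\d$ but which is not in $\d$, and choose it minimal. Consider the pieces $\Lk s - t$ and $\Lk t - s$. Each is full in $L$, since links in a flag complex are full and deleting a vertex preserves fullness. If all vertices of $\sigma$ were adjacent to $s$, then flagness would put $\sigma\cup\{s\}$ in $L$, so $\sigma\in\Lk s$. Then $\sigma\in \Lk s - t$ unless $t\in\sigma$, which is impossible because $t\notin\d$. So $\sigma$ has a vertex $x$ not adjacent to $s$, and symmetrically a vertex $y$ not adjacent to $t$. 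Since $x\in\d$, it lies in $\Lk t$; likewise $y\in\Lk s$. In particular $x\neq y$. Because $x,y\in\sigma$, $xy$ is an edge. Thus $s,t,x,y$ is a $4$-cycle with $sx$ and $ty$ non-edges, that is, a full $4$-cycle containing $e$.

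\textbf{Main obstacle.} The step needing the most care is checking that non-flagness of $L/e$ can only come from $\d$ failing to be full. The required observation is that a missing simplex of $L/e$ with pairwise adjacent vertices either avoids $q$, in which case it is a missing simplex of $L - (\text{open stars})$, which is full in $L$ and so impossible, or contains $q$, in which case its other vertices lie in $\d$ and are pairwise adjacent but do not span a simplex of $\d$. Once this is recorded, the rest follows as above.
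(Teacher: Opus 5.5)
Your proof is correct and takes the same route as the paper: a full $4$-cycle $stxy$ becomes the empty triangle $qxy$ in $L/e$, and conversely a failure of fullness of the boundary $(\Lk_L s - t)\cup_{\Lk_L e}(\Lk_L t - s)$ produces a full $4$-cycle through $e$. The only difference is that you write out the details the paper's two-line proof leaves implicit, namely the reduction of flagness of $L/e$ to fullness of this boundary and the extraction of $x,y$ from a simplex whose vertices lie in the boundary but which is not contained in it.
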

\begin{proof}
    If $e$ is in a full $4$-cycle, then contracting $e$ results in a full $3$-cycle, hence $L/e$ is not flag.
    On the other hand, if $e$ is not in an induced $4$-cycle then $(\Lk_{L} v - w )\mathop{\cup}\limits_{\Lk_{L}e} (\Lk _{L}w - v)$ is full in $L-v-w$, so $L/e$ is flag.
\end{proof}

Suppose now $L$ is a minimal flag triangulation of a surface and $v$ is a degree $4$ vertex, $\Lk v= abcd$.
By minimality, the edge $av$ is contained in a full $4$-cycle of the form $avcx$ with $x \notin \St v$.
Similarly, we have another full $4$-cycle $bvdy$.
We will refer to these $4$-cycles as \emph{diagonal full $4$-cycles}.
\begin{lemma}\label{l:octahedron}
    Suppose $L$ is a minimal flag triangulation of a surface and $v$ is a degree $4$ vertex.
    If one of the diagonal full $4$-cycles is trivial, then $L$ is an octahedron.
\end{lemma}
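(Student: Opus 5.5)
Suppose the diagonal full $4$-cycle $C = avcx$ is trivial (nullhomotopic) in the surface $L$. The plan is to show that $C$ bounds a disk $D$ containing very little, and then use minimality together with the fact that $L$ is a surface to force $x$ to be a degree-$4$ vertex whose link is $abcd$, i.e.\ the octahedron.

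First, a nullhomotopic simple closed curve on a surface bounds an embedded disk. So $C$ separates $L$, and one side is a closed disk $D$ with $\partial D = C$. The vertices $b$ and $d$ lie on opposite sides of $C$ near $v$, since the link of $v$ is the cycle $abcd$ and $C$ passes through $a,v,c$. So one of them, say $b$, lies in the interior of $D$.

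Next, consider the triangulated disk $D$ with boundary the full $4$-cycle $C$. The key step is to show that $D$ is the cone on $C$ from a single interior vertex, namely $b$. Suppose $D$ has an interior edge. Since $C$ is full, no interior edge joins two vertices of $C$, so every interior edge has an interior endpoint. Among all trivial full $4$-cycles in $L$, choose one whose disk $D$ has the fewest triangles. We will argue that $D$ contains an interior vertex $u$. If $u$ has degree $\ge 5$ or is not adjacent to $C$ appropriately, then some interior edge at $u$ is admissible unless it lies in a full $4$-cycle. A full $4$-cycle through an interior edge of $D$ is either contained in $D$ or crosses $C$; in the first case it is trivial and bounds a strictly smaller disk, contradicting the choice. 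In the second case, we exploit the fact that $C$ has only four vertices, so any crossing $4$-cycle must pass through two opposite vertices of $C$. This gives a shorter-diagonal decomposition of $D$ into two disks each bounded by a full or non-full $4$-cycle, and we argue recursively. We expect that the only configuration surviving minimality is the one where $D$ is the cone on $C$ from $b$.

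So $\Lk b = avcx$, and by the same argument the other side near $v$ works as well. Granting that $D$ is the cone from $b$, we then analyze the complement $L - \operatorname{int} D$. It contains $v$ with the triangles $vcd$ and $vda$, together with $x$. Now $v$ and $b$ have degree $4$, and by the symmetric argument applied to the diagonal cycle at $b$, we want $\Lk x \supseteq \{a,b,c\}$ together with $d$. The cycle $C' = a d c x$ (obtained by swapping $v$ for $d$ in the picture) is homotopic to $C$ through the disk formed by the triangles at $v$, so it is also trivial. Applying the same reduction to $C'$ on the side not containing $v$, that side is the cone from some vertex. Since $\Lk_L x$ must close up, that vertex is $x$ itself and the region degenerates, giving $\Lk x = abcd$. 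Then the six vertices $v,x,a,b,c,d$ with these links form a closed surface, which is the octahedron. Since $L$ is connected, $L$ equals this octahedron.

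The main obstacle is the disk step: ruling out any extra interior vertices in $D$ using only minimality (every edge lies in a full $4$-cycle). I would first try an extremal argument. Take an interior vertex $u$ of $D$ closest to $C$, and consider the full $4$-cycle through the edge from $u$ to $C$. Either it stays in $D$ and cuts off a smaller trivial full $4$-cycle, contradicting minimality of the choice, or it exits through $C$ at the opposite vertex, which again yields a smaller disk bounded by a $4$-cycle; in the latter case we need to check carefully that this cycle is full.
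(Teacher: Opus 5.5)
\textbf{There is a genuine gap.} The step you yourself flag as the main obstacle, that $D$ is the cone on $C$ from $b$, is never proved. You write ``we expect'' and ``we argue recursively'', and the extremal strategy you sketch cannot close it.
\begin{itemize}
\item Choosing the trivial full $4$-cycle with the smallest disk loses all contact with $v$ and with $C$.
\item The terminal configuration of any such descent is a degree-$4$ vertex inside $D$ whose link is a trivial full $4$-cycle. That configuration is not contradictory by itself. Ruling it out requires knowing that a degree-$4$ vertex with a trivial diagonal cycle forces an octahedron, which is the statement you are proving. Indeed, in the paper the disk statement (Lemma~\ref{l:4-cycle}) is deduced \emph{from} this lemma.
\end{itemize}
The endgame is also not sound as written. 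The cycle $adcx$ need not be full (in the octahedron $dx$ is an edge), so the reduction does not apply to it. Moreover, a cone vertex for a disk bounded by $adcx$ cannot be $x$, which lies on that cycle. Granting the cone claim, you could instead observe that the full $4$-cycle through $bv$ supplied by minimality must be $bvdx$, which forces the edge $dx$.

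\textbf{The missing idea} is to use the second diagonal cycle $bvdy$. You never bring it in, and it makes the disk analysis unnecessary.
\begin{itemize}
\item You already showed that $b$ lies in the interior of $D$ and $d$ lies outside it.
\item The path $d\,y\,b$ joins them and avoids $v$. Since $C=\d D$ separates, this path must meet $C$, necessarily at a vertex because both are subcomplexes.
\item $b,d\notin C$, and $y\neq a,c$ because $y\notin \St v$. Hence $y=x$.
\end{itemize}
This is the paper's argument. There it is phrased as follows: the mod~$2$ intersection number of the two diagonal cycles is $0$ because one is trivial, yet they cross at $v$, so they must meet again, forcing $x=y$.

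To finish: $x$ is then adjacent to $a,b,c,d$. By flagness $xab$, $xbc$, $xcd$, $xda$ are triangles, so $\Lk x\supseteq abcd$, and hence $\Lk x=abcd$ because links are circles. Thus $\St v\cup \St x$ is a closed octahedral surface, and by connectedness it is all of $L$.
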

\begin{proof}
    The intersection number between the diagonal full $4$-cycles is 0, because one of them is trivial.
    Since the cycles cross each other at $v$, they must have another point of intersection.
    Thus, $x = y$ and, since $L$ is a flag triangulation of a surface, $L$ is the octahedron with vertices $v, a, b, c, d, x$.
\end{proof}
\begin{lemma}\label{l:4-cycle}
    Suppose $L$ is a minimal flag triangulation of a surface and $C$ is a trivial full $4$-cycle in $L$.
    Then $C$ is the link of a vertex of degree $4$.
\end{lemma}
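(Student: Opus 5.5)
Since $C$ is trivial in a surface, it bounds a disk $D \subset L$; the plan is to show that this disk is exactly the star of a single vertex. We treat the sphere case separately: there $C$ separates $L$ into two disks and we take $D$ to be one of them. Among all trivial full $4$-cycles, choose one whose bounding disk $D$ contains the minimal number of vertices. It then suffices to show that a minimal such $D$ is the star of its unique interior vertex. For an arbitrary $C$ we additionally have to show that $D$ itself is a star; this follows by applying the same argument inside $D$ to innermost cycles.

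Write $C = abcd$. Since $C$ is full, $ac$ and $bd$ are not edges. Hence $D$ has at least one interior vertex, because a disk bounded by a $4$-cycle with no interior vertices must contain a diagonal. Every edge of $L$ lies in a full $4$-cycle, by minimality and the characterization lemma for admissible edges. Pick an interior vertex $v$ of $D$ adjacent to $a$; one exists because the triangles of $D$ at $a$ lead inward. The edge $av$ lies in a full $4$-cycle $avyz$.

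We analyse how $avyz$ meets $D$.
\begin{enumerate}
\item Suppose the cycle $avyz$ stays in $D$. Then it bounds a subdisk of $D$. If that subdisk is strictly smaller, minimality is contradicted, unless the subdisk is itself a vertex star; we handle that case by induction, taking $D$ innermost.
\item Suppose the cycle exits $D$. Then it must cross $C$. A $4$-cycle through $a$ and $v$ that leaves $D$ must pass through another vertex of $C$. Since $v$ is interior, $v$ is not adjacent to anything outside $D$. So the cycle has the form $a v x u$ with $x \in C$, and $x \neq a$.
\begin{itemize}
\item If $x = b$ or $x = d$, the cycle together with $C$ splits $D$ into smaller disks bounded by $4$-cycles. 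These are full, or else we obtain chords that contradict flagness, and this contradicts the minimal choice.
\item If $x = c$, then $v$ is adjacent to both $a$ and $c$. This splits $D$ into two disks bounded by the $4$-cycles $abcv$ and $adcv$, which are smaller unless each has no interior vertex. In that case $vb$ and $vd$ are edges by the disk-with-no-interior-vertex observation, and $D = \St v$ with $\Lk v = abcd$, so $v$ has degree $4$.
\end{itemize}
\end{enumerate}

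The main obstacle is the bookkeeping in these cases. One must check that the smaller $4$-cycles produced are genuinely full, since a chord would create a triangle that, by flagness, fills in and reduces further. One must also check that they are trivial, which holds because they lie in $D$, and that they bound strictly fewer vertices. We first try a clean induction on the number of interior vertices of $D$. The statement to prove is that a full $4$-cycle bounding a disk with $k \ge 1$ interior vertices in a minimal $L$ forces $k = 1$, and the argument is that any full $4$-cycle through an edge $av$ subdivides $D$ into disks with fewer interior vertices. The degenerate sphere case, where the second disk could also be chosen, is handled by picking the smaller side. We then conclude that $C$ is the link of a degree-$4$ vertex.
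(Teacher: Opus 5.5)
Your argument only ever controls an \emph{innermost} disk. The appeals to ``minimal choice'' in cases 1 and 2 are valid only if $C$ was chosen so that $D$ has the fewest vertices. The sentence ``for an arbitrary $C$ \dots\ this follows by applying the same argument inside $D$ to innermost cycles'' is precisely the content of the lemma, not a consequence of what you proved.

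The induction in your last paragraph does not close either. Knowing that a full $4$-cycle through $av$ cuts $D$ into disks with fewer interior vertices, each of which is (by induction) a degree-$4$ vertex star, gives no contradiction with $k\ge 2$. For instance, let $D=\St v\cup \St w_1\cup \St w_2$ with $\Lk v=aw_1cw_2$, $\Lk w_1=abcv$, $\Lk w_2=adcv$. This is a flag disk with full boundary $abcd$ and three interior vertices. Every full $4$-cycle through $av$ ($avcb$, $avcd$, or $avcu$ with $u\notin D$) splits it into the vertex stars $\St w_1$ and $\St w_2$, and nothing in your proposal rules it out. (Incidentally, your subcase $x=b$ or $x=d$ is vacuous: $avbu$ contains the chord $ab$, so it is never full.)

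The missing idea is a second use of minimality, at the degree-$4$ vertex at the bottom of the nesting, to show it cannot sit strictly inside $D$. The paper starts from an edge joining two \emph{interior} vertices of $D$; such an edge exists when $k\ge 2$ because $C$ is full. A full $4$-cycle through that edge automatically lies in the full disk $D$. Iterating gives nested disks ending in $\St w$ with $\deg w=4$, where $\Lk w$ contains an edge whose endpoints are interior vertices of $D$. Those endpoints lie in different diagonal pairs of $\Lk w$, so both diagonal full $4$-cycles through $w$ lie in $D$ and are trivial. As in Lemma~\ref{l:octahedron}, they must then meet a second time, producing an octahedron inside the disk $D$, which is impossible. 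In the example above this is visible directly: the edge $vw_1$ lies in no full $4$-cycle at all.

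Choosing an edge $av$ with $a\in C$, as you do, loses exactly this control. The cycle may leave $D$, and the link of the final degree-$4$ vertex may have a diagonal pair lying on $C$, whose diagonal cycle can then exit $D$ and be nontrivial.
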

\begin{proof}
    Since $C$ is nullhomotopic, it bounds a disk $D$, which is necessarily full.
    Since $C$ is full, if $D$ has more than one interior vertex, $D$ has a pair of adjacent interior vertices.
    By minimality, the resulting edge is contained in another full $4$-cycle $C'$, which has to be contained in $D$ and therefore is also nullhomotopic.
    Repeating this argument, we obtain a nested sequence of disks, which has to end with the star of an interior vertex of $D$ of degree $4$.
    Since the boundary of each subsequent disk contains an interior edge of the previous one, two diagonal full $4$-cycles passing through this vertex also contain interior edges of the previous disk.
    Then it follows that the diagonal cycles are contained in $D$ and therefore trivial, and hence $D$ contains an octahedron, which is a contradiction.
    So $D$ has a single interior vertex, which proves the claim.
\end{proof}
\begin{corollary}\label{c:edge}
    Suppose $L$ is a minimal flag triangulation of a surface which is not an octahedron.
    Then every edge of $L$ is contained in a nontrivial $4$-cycle.
\end{corollary}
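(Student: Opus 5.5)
Let $e$ be an edge of $L$. Since $L$ is minimal, the characterization of admissible edges says that $e$ lies in some full $4$-cycle $C$. If $C$ is nontrivial we are done, so the plan is to assume $C$ is trivial and produce a nontrivial $4$-cycle through $e$.

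If $C$ is trivial, Lemma~\ref{l:4-cycle} says $C = \Lk u$ for some vertex $u$ of degree $4$. Write $\Lk u = abcd$ and, after relabeling, $e = ab$. Lemma~\ref{l:octahedron} applies because $L$ is not an octahedron, so both diagonal full $4$-cycles at $u$, namely $aucx$ and $budy$, are nontrivial. The task is now to pass from these nontrivial cycles to one that actually contains the edge $ab$.

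For this I would form the $4$-cycle $abcx$: go along $ab$ and $bc$ in $\Lk u$, then along $cx$ and $xa$ from the diagonal cycle. Two checks are needed.

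First, $abcx$ must be an embedded cycle. The vertices are distinct because $x \notin \St u$, so $x \neq b$.

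Second, $abcx$ must be nontrivial. It is obtained from $aucx$ by replacing the path $a u c$ with the path $a b c$. These two paths together form the boundary of the two triangles $uab$ and $ubc$, which is null-homotopic in $L$. Hence $abcx$ is homotopic to $aucx$ (with suitable basepoint and orientation), and so it is nontrivial.

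No fullness is needed, since the corollary only asks for a nontrivial $4$-cycle. The one step to be careful about is this homotopy: the closed loop $a\,u\,c\,b\,a$ bounds the union of the triangles $abu$ and $bcu$, both of which lie in $\St u \subset L$.
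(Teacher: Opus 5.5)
Your proposal is correct and follows essentially the same argument as the paper. The paper also uses Lemma~\ref{l:4-cycle} to reduce to $C=\Lk u$, and then pushes the interior part $a\,u\,c$ of a diagonal cycle into the link; nontriviality again comes from Lemma~\ref{l:octahedron}. Your write-up just makes explicit the details (distinctness of $a,b,c,x$, and the homotopy across the triangles $uab$ and $ubc$) that the paper leaves implicit.
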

\begin{proof}
    Let $e$ be an edge of $L$ and $C$ be a full $4$-cycle containing $e$.
    If $C$ is nontrivial, then we are done, otherwise by Lemma~\ref{l:4-cycle} $C$ is a link of a vertex.
    Then pushing the interior part of a diagonal cycle through this vertex to the link gives a $4$-cycle which contains $e$ and is nontrivial, since $L$ is not an octahedron.
\end{proof}

The following result is well-known, see for example \cite{v15}.
It follows immediately from Corollary~\ref{c:edge}, since all cycles in $S^{2}$ are trivial.
\begin{corollary}\label{c:sphere}
    The only minimal flag triangulation of $S^2$ is the octahedron.
\end{corollary}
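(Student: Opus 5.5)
The statement follows from Corollary~\ref{c:edge}, so the plan is to argue by contradiction. Let $L$ be a minimal flag triangulation of $S^2$ and suppose $L$ is not the octahedron. Then Corollary~\ref{c:edge} applies: every edge of $L$ lies in a nontrivial $4$-cycle. But $S^2$ is simply connected, so every closed curve in $L$, and in particular every $4$-cycle, is nullhomotopic. Hence $L$ has no nontrivial $4$-cycles at all, which is impossible since $L$ has at least one edge. Therefore $L$ must be the octahedron.

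It remains to check the converse, namely that the octahedron $O^2$ really is a minimal flag triangulation of $S^2$. Flagness holds because every set of pairwise adjacent vertices spans a face. For minimality, take any edge $ab$ of $O^2$ and let $a'$ and $b'$ be the vertices antipodal to $a$ and $b$. Then $a b a' b'$ is a $4$-cycle containing $ab$, and it is full because antipodal vertices are not adjacent. By the characterization lemma, an edge lying in a full $4$-cycle is not admissible, so $ab$ is not admissible and $O^2$ is minimal.

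Since the earlier results already carry the combinatorial content, I do not expect a real obstacle. The only place needing care is that Lemma~\ref{l:octahedron}, and through it Corollary~\ref{c:edge}, assumed the surface setting. The sphere is a surface, so they apply without change.
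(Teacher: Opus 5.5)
Your argument is correct and is exactly the paper's: the corollary follows immediately from Corollary~\ref{c:edge}, since every cycle in $S^2$ is nullhomotopic. Your extra check that the octahedron is itself minimal (each edge $ab$ lies in the full $4$-cycle $aba'b'$) is a correct detail that the paper leaves implicit.
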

\begin{lemma}\label{l:full cycle}
    Suppose $L$ is a minimal flag triangulation of a surface and $D$ a full subcomplex of $L$ which is homeomorphic to a $2$-disk.
    Then $D - \d D$ is a disjoint union of vertices, and in particular, if $\d D$ is also full, then $D = \Cone(\d D)$.
\end{lemma}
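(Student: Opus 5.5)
The idea is to mimic the proof of Lemma~\ref{l:4-cycle}: look for an interior edge, use minimality to get a full $4$-cycle through it, and show that this cycle is trapped in $D$ and bounds a smaller full disk. Suppose, for contradiction, that $D - \d D$ contains an edge $e$. Then both endpoints of $e$ are interior vertices of $D$. By minimality, $e$ lies in a full $4$-cycle $C$ of $L$.

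The first step is to show that $C \subset D$. The two vertices of $C$ other than the endpoints of $e$ are neighbors of interior vertices of $D$. Since $D$ is a subsurface, every neighbor in $L$ of an interior vertex of $D$ lies in the star of that vertex, and hence in $D$. So all four vertices of $C$ lie in $D$, and since $D$ is full, $C \subset D$. Then $C$ is a simple closed curve in the disk $D$, so it bounds a subdisk $D_1 \subset D$. This $D_1$ is full in $D$, hence full in $L$, and $\d D_1 = C$ is full. Moreover $D_1$ is nullhomotopic, so $C$ is a trivial full $4$-cycle.

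The second step applies Lemma~\ref{l:4-cycle}, in the form of its proof inside the disk. It gives $D_1 = \Cone(C)$ with cone point some vertex $u$ of degree $4$. But $C$ passes through both endpoints of $e$, and a single-vertex cone has no such pair of adjacent interior vertices of $D$ on its boundary that fails to be caught by the argument. More precisely, one of the two diagonal full $4$-cycles through $u$ passes through interior edges of $D$. Exactly as in Lemma~\ref{l:4-cycle}, each such diagonal cycle contains vertices interior to $D$, so by the first step it lies in $D$ and is trivial. Lemma~\ref{l:octahedron} then says $L$ is an octahedron. An octahedron contains no full disk with an interior edge, since every full disk in it is a vertex star or smaller, so this is a contradiction.

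\textbf{The main obstacle} is the claim in the second step that the diagonal cycles through $u$ pass through interior vertices of $D$. Its endpoints include the endpoints of $e$, or reach them through the star of $u$. If a diagonal cycle $u a x c$ has $a$ or $c$ interior to $D$, then $x$ is forced into $D$ and the cycle is trivial. I would arrange this by choosing $e$ on $C$ suitably, which is possible because both endpoints of $e$ are interior vertices of $D$ lying on $C = \Lk u$. Once no interior edges remain, $D - \d D$ is a discrete set of vertices. If $\d D$ is also full, then $D$ has no chords, every triangle of $D$ contains an interior vertex, and connectivity of the disk forces exactly one interior vertex. Hence $D = \Cone(\d D)$.
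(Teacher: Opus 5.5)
Your argument is correct and is essentially the paper's proof: an edge with both endpoints interior lies in a full $4$-cycle trapped in $D$, hence trivial, hence (by Lemma~\ref{l:4-cycle}) the link of a degree-$4$ vertex $u$, and the diagonal $4$-cycles through $u$ are then also trapped in $D$ and trivial, forcing an octahedron. The step you flag as the main obstacle needs no choice of $e$: its endpoints are adjacent on $C=\Lk u$, so every opposite pair $\{a,c\}$ of $C$ contains one of them, and each diagonal cycle $uaxc$ automatically lies in $D$.
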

\begin{proof}
    Any full $4$-cycle that has an edge in $D - \d D$ is contained in $D$, and hence is trivial.
    Therefore, by Lemma~\ref{l:4-cycle}, it is a link of a vertex in $D - \d D$.

    If there exists an edge in $D - \d D$, then it is contained in a full $4$-cycle, which by the above is a link of some vertex in $D - \d D$.
    Then, again by the above, two diagonal full $4$-cycles passing through this vertex are trivial, and hence produce an octahedron in $L$.
\end{proof}
\begin{lemma}\label{l:vertex}
    Suppose $L$ is a minimal flag triangulation of a surface and not an octahedron.
    Then for any vertex, there exist two nontrivial $4$-cycles which cross exactly in that vertex.
\end{lemma}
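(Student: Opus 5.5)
The plan is to split according to the degree of the vertex $v$ and to rely throughout on Corollary~\ref{c:edge}, Lemma~\ref{l:4-cycle} and Lemma~\ref{l:octahedron}. By ``cross exactly in $v$'' we mean that the two $4$-cycles meet only at $v$ and pass through each other there transversally. Equivalently, if the first cycle uses the link vertices $a,c$ and the second uses $b,d$, then $a,c$ separate $b$ from $d$ in the cycle $\Lk v$.

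\emph{Degree $4$.} Suppose $\deg v = 4$ with $\Lk v = abcd$. Minimality provides the diagonal full $4$-cycles $avcx$ and $bvdy$. Since $L$ is not an octahedron, Lemma~\ref{l:octahedron} shows that both cycles are nontrivial. They clearly cross at $v$, because $a,c$ separate $b,d$ in $\Lk v$. It remains to see that they meet nowhere else, i.e.\ that $x\neq y$ and that $x,y \notin \Lk v$. Fullness of $avcx$ gives $x\notin\{b,d\}$, since $b,d$ are adjacent to $v$; symmetrically $y \notin \{a,c\}$. If $x=y$, then $x$ is adjacent to all of $a,b,c,d$. Flagness then gives the octahedron as a subcomplex of the surface $L$, so $L$ would be the octahedron, which is excluded.

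\emph{Degree at least $5$.} Here the idea is to pick two edges $va$ and $vb$ with $a,b$ non-adjacent in $\Lk v$. By Corollary~\ref{c:edge}, each lies in a nontrivial $4$-cycle, say $v a x a'$ and $v b y b'$. Nontriviality and Lemma~\ref{l:4-cycle} force each of these cycles to be full and not equal to $\Lk v$. Fullness gives $a,a'$ non-adjacent, so $a'$ is a non-neighbour of $a$ in $\Lk v$, and likewise for $b,b'$. Each such cycle cuts $\Lk v$ into two arcs.

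\emph{Arranging the crossing.} The main obstacle is to choose the two cycles so that they separate each other at $v$ and share no other vertex. For this I would use an intersection number argument. Take one nontrivial full $4$-cycle $C_1$ through $v$. If $v$ lies on a nontrivial cycle $C_2$ that enters and leaves $v$ on opposite sides of $C_1$, then either they meet only at $v$, or they share further vertices. In the latter case I would replace $C_2$ by a shortcut that is still a nontrivial $4$-cycle through $v$ (surgery along the common vertices), using Lemma~\ref{l:full cycle} to rule out extra interior structure in the small disks so created.

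To find such a $C_2$ at all, I would argue as follows. If every edge $vb$ on one side of $C_1$ had all its nontrivial $4$-cycles returning on the same side, then these cycles together with $C_1$ would bound full disks. Lemma~\ref{l:full cycle} makes such a disk a cone over its boundary, so its interior vertices have degree $4$. This lets me reduce to the degree-$4$ case, or to the existence of a trivial diagonal cycle, which Lemma~\ref{l:octahedron} excludes. I expect this case analysis in the high-degree case to be the delicate step.
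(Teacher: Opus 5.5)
Your degree-$4$ case is correct. The case $\deg v\ge 5$, however, is where the lemma has its real content, and it is not proved: both steps you sketch under ``Arranging the crossing'' fail as stated.

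First, suppose every nontrivial $4$-cycle through an edge $vb$ on one side of $C_1$ returns to that side. There is no reason for such a cycle to bound a disk together with $C_1$. Two nontrivial $4$-cycles that touch at $v$ without crossing need not cobound any disk. For instance, homotopic ones meeting only at $v$ cobound a pinched annulus, and non-homologous ones must meet again elsewhere. So Lemma~\ref{l:full cycle} never gets applied. Even when it does apply, a cone over a $k$-cycle gives a vertex of degree $k$, not $4$.

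Second, the ``surgery'' does not produce the pair you need. Nontrivial $4$-cycles in a flag complex are full, so the vertices opposite $v$ on $C_1=vaxa'$ and $C_2=vbyb'$ are not in $\Lk v$. Hence the only possible extra common vertex is $x=y$. The shortcuts through it, such as $vaxb$, share the path $v\,a\,x$ with $C_1$, so they do not meet $C_1$ only at $v$.

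The missing idea is an extremal choice. It handles both issues at once and makes the degree split unnecessary.
\begin{enumerate}
\item Among all nontrivial $4$-cycles through $v$ (they exist by Corollary~\ref{c:edge}), pick $C=vuxu'$ minimizing the distance in $\Lk v$ between $u$ and $u'$.
\item Let $I$ be an arc of $\Lk v$ realizing this distance; it has length $\ge 2$ by fullness. Let $J$ be the complementary arc and $w\in I$ the neighbour of $u$.
\item Take a nontrivial $4$-cycle $C'=vwyw'$ through $vw$. Fullness of $C'$ excludes $w'=u$. If $w'\in I$ (including $w'=u'$), then $w,w'$ lie in $I-u$, so $C'$ turns more sharply than $C$, a contradiction. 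Hence $w'$ lies in the interior of $J$, and $C,C'$ cross at $v$.
\item By fullness, the only other possible common vertex is $y=x$. Then $uwx$ is a triangle by flagness, so $vwxu'$ is homotopic to $C$ across the triangles $vuw$ and $uwx$. It is therefore a nontrivial $4$-cycle turning more sharply than $C$, a contradiction.
\end{enumerate}
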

\begin{proof}
    Let $v \in L$ be a vertex.
    By Corollary~\ref{c:edge}, every edge starting at $v$ is contained in a nontrivial $4$-cycle.
    Consider a nontrivial $4$-cycle $C$ through $v$ which makes the sharpest turn at $v$, i.e.
    which minimizes the distance in $\Lk v$ between two vertices where it intersects $\Lk v$ among all such $4$-cycles.
    Denote the vertices $C \cap \Lk v$ by $u$ and $u'$, they separate $\Lk v$ into two arcs, $I$ and $J$, where $I$ realizes the distance between them.
    The length of $I$ is at least $2$, since $C$ is full.
    Let $w$ be a vertex in $I$ adjacent to $u$.
    Then a nontrivial $4$-cycle $C'$ containing the edge $vw$ also intersects the interior of $J$, since otherwise it would make a sharper turn.
    So $C$ and $C'$ cross each other at $v$.

    They do not have any other vertex in common, since if $v'$ were such a vertex, then the $4$-cycle $vu'v'w$ would be homologous to $C$ and making a sharper turn than $C$.
    So $C$ and $C'$ cross exactly at $v$.
\end{proof}
\begin{lemma}\label{l:s2+e}
    Suppose $L$ is a relatively minimal flag triangulation of $S^{2}$ with a simplex $\gs=u*\gt$ with $u$ a vertex (the only admissible edges are in $\gs$).
    Then the link in $S^{2}$ of $u$ is a $4$-cycle.
\end{lemma}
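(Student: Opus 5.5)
The plan is to run the disk-shrinking argument of Lemma~\ref{l:4-cycle} inside $S^2$, where every cycle is trivial, using only edges that avoid $\gs$. From this we derive a strong local structure around $u$, and then show that if $\Lk u$ had length $d\ge 5$ there would be an admissible edge outside $\gs$. Write $\Lk u$ as a $d$-cycle, $d\ge 4$. Since $\gs=u*\gt$ has at most three vertices, at most two vertices of $\Lk u$ lie in $\gs$, and at most one edge of $\Lk u$ lies in $\gs$.

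\emph{Step 1 (one-sided cone lemma).} Let $C$ be a full $4$-cycle in $L$. It separates $S^2$ into two disks, and the vertices of $\gs$ not on $C$ all lie on one side, since $\gs$ is a simplex. Let $D$ be the other side. No edge joins two vertices of $C$ through the interior of $D$, because $C$ is full. Every edge with an endpoint in $D-\d D$ avoids $\gs$, so it is non-admissible and therefore lies in a full $4$-cycle. Repeating the nested-disk argument of Lemma~\ref{l:4-cycle} (or Lemma~\ref{l:full cycle}) verbatim inside $D$, the only possible conclusions are that $D=\Cone(C)$ with a degree-$4$ apex, or that $L$ contains, hence is, an octahedron. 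In the octahedron case every vertex has degree $4$ and we are done. So assume from now on that the $\gs$-free side of every full $4$-cycle is a cone on it.

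\emph{Step 2 (local structure at $u$).} Take $w\in\Lk u$ with $w\notin\gs$. The edge $uw$ is not admissible, so it lies in a full $4$-cycle $C=uwxy$ with $y\in\Lk u$ and $x\notin\St u$. By Step 1, the $\gs$-free side of $C$ is $\Cone(C)$ with apex $z$. Since $z$ is adjacent to $u$, $z\in\Lk u$, so $w,z,y$ are consecutive in $\Lk u$, with $\deg z=4$ and $\Lk z=uwxy$. Moreover $z\notin\gs$, because $z$ is interior to the $\gs$-free side. Hence $uz$ is also non-admissible and lies in a full $4$-cycle $uzx'y'$. Because $\Lk z=uwxy$ and $x'\ne u$ must be a neighbor of $z$ not adjacent to $u$, we get $x'=x$, and $y'$ is a vertex of $\Lk u$ adjacent to $x$ with $y'\notin\{w,y\}$. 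Applying Step 1 to $uzxy'$ forces the cone apex to be $w$ or $y$; that vertex then has degree $4$ with link through $x$. Propagating around $\Lk u$ in this way, each successive vertex of $\Lk u$ is shown to be adjacent to $x$ and of degree $4$. The conclusion is $\Lk u\subset\Lk x$, so $L$ is the suspension of the $d$-cycle $\Lk u$ with suspension points $u$ and $x$.

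\emph{Step 3 (contradiction for $d\ge5$).} In the suspension of a $d$-cycle with $d\ge5$, an equator edge $ww'$ lies in no full $4$-cycle. Contracting it yields the suspension of a $(d-1)$-cycle, which is flag, so $ww'$ is admissible. At most one equator edge lies in $\gs$, so some admissible edge lies outside $\gs$, contradicting relative minimality. Therefore $d=4$.

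The main obstacle is Step 2. The propagation around $\Lk u$ must be organized carefully, because the $\gs$-side may contain up to two vertices of $\Lk u$ where the argument cannot be started. I would handle this by starting at a vertex of $\Lk u$ outside $\gs$, propagating in both directions, and checking that the at most two vertices in $\gs$, being adjacent to each other, cannot block the conclusion that every vertex of $\Lk u$ is adjacent to $x$.
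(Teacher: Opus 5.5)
There is a genuine gap: your argument treats $\gs$ as a simplex of $S^{2}$, but the edges from $u$ to $\gt$ are exactly the edges of $L$ that are not in $S^{2}$. This breaks the proof in two places.

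\textbf{Step~1.} The vertices of $\gs$ off a full $4$-cycle $C\subset S^{2}$ need not lie on one side of $C$ ``since $\gs$ is a simplex''. The cycle $C$ can separate $u$ from $\gt$ in $S^{2}$, and that separating case is the real content of the lemma. For cycles through $u$ the claim survives, since the remaining vertices span a face of $\gt\subset S^{2}$, but only if the cycle lies in $S^{2}$.

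\textbf{Step~2.} This is the more serious failure. The full $4$-cycle $uwxy$ through a spoke $uw$ need not lie in $S^{2}$. The vertex $y$ may belong to $\gt$ and be joined to $u$ only by an edge of $\gs$. Then the cycle bounds no disks in $S^2$ and Step~1 cannot be applied.

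Here is an example. Join two $4$-cycles $C_{1}=a_{1}b_{1}c_{1}d_{1}$ and $C_{2}=a_{2}b_{2}c_{2}d_{2}$ by an antiprism band ($a_{1}\sim a_{2},b_{2}$, $b_{1}\sim b_{2},c_{2}$, and so on). Cone off $C_{1}$ with apex $u$ and $C_{2}$ with apex $v$, and take $\gs=uv$. Then:
\begin{itemize}
\item $L=S^{2}\cup uv$ is flag, because $u$ and $v$ have no common neighbours.
\item Every edge other than $uv$ lies in $C_{1}$, in $C_{2}$, or in a full $4$-cycle $upqv$ with $p\in C_{1}$, $q\in C_{2}$. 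So $L$ is relatively minimal.
\item $C_{1}$ separates $u$ from $v$, which refutes the Step~1 claim.
\item The only full $4$-cycles through $ua_{1}$ are $ua_{1}a_{2}v$ and $ua_{1}b_{2}v$, and both use $uv$.
\end{itemize}
Your Step~2 never uses $d\ge 5$. If it worked, it would conclude that this $10$-vertex sphere is the suspension of $C_{1}$, i.e.\ an octahedron, which is false. The argument breaks exactly at the choice of $uwxy$. The propagation around $\Lk u$ is also only sketched, but that is secondary.

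\textbf{The missing idea} is to use edges of the link rather than spokes at $u$; this is how the paper argues.
\begin{itemize}
\item If $e$ is an edge of $\Lk_{S^{2}}u$, a full $4$-cycle $C_{e}$ through $e$ cannot contain $u$, because $u$ is adjacent to both endpoints of $e$. Hence $C_{e}$ uses no edge of $L$ outside $S^{2}$, and it bounds two disks in $S^{2}$.
\item If one of these disks misses both $u$ and $\gt$, the argument of Lemma~\ref{l:4-cycle} makes it the star of a degree-$4$ vertex. The two diagonal $4$-cycles through that vertex have no edges in $\gs$, and they force $S^{2}$ to be an octahedron.
\item Otherwise $C_{e}$ separates $u$ from $\gt$, and the disk $D\ni u$ contains $\St u$. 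If $\Lk_{S^{2}}u$ had length $>4$, it would have an edge $e'$ interior to $D$. The full $4$-cycle through $e'$ bounds a strictly smaller disk $D'\ni u$ of the same kind.
\item Repeating gives an infinite strictly descending chain of disks, which is the contradiction.
\end{itemize}
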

\begin{proof}
    Let $e$ be an edge in $\Lk_{S^{2}} u$, and let $C_{e}$ be a full $4$-cycle through $e$.
    Then $C_{e}$ avoids $u$ and $\gt$, so $C_{e} \subset S^{2}$ and hence bounds two disks.
    If one of the disks contains neither $u$ nor $\gt$, then, arguing as in Lemma \ref{l:4-cycle}, we get that the disk is the star of a degree $4$ vertex.
    The two diagonal $4$-cycles through this vertex cannot have edges in $\gs$, so they produce an octahedron, and hence $S^{2}$ is an octahedron and we are done.

    If $C_{e}$ separates $u$ and $\gt$ in $S^{2}$, then it bounds a disk $D$ with $u \in D$ and $\gt \not\subset D$.
    Then $\Lk_{S^{2}} u \subset D$, and if $\Lk_{S^{2}} u$ is a $> 4$-cycle, it must have an edge $e'$ in the interior of $D$.
    Since $C_e$ is full, $C_{e'} \subset D$, and also avoids $u$ and $\gt$ and produces two disks.
    As before for one of the disks $D'$, we have $u \in D'$ and $\gt \not\subset D'$.
    Then $D' \subset D$, and we get a nested sequence of disks which never stops.
    Hence $\Lk_{S^{2}} u$ is a $4$-cycle.
\end{proof}

\section{\texorpdfstring{Triangulations of $T^{2}$ and $\rr \mathrm{P}^{2}$}{Triangulations of T2 and RP2}}

The main result of this section are Theorems~\ref{t:torus} and \ref{t:RP2}, which compute the $\F_{2}$-$L^{2}$-Betti numbers of RACGs based on flag triangulations of the torus and the real projective plane, respectively.

The initial step of our proof is a reduction to minimal flag triangulations using Theorem~\ref{t:inequality}.
After that, we use two methods to show $b^{(2)}_{1}(W_L) = 0$.

The first is to find three disjoint homotopically nontrivial $4$-cycles in $L$, in which case a spectral sequence argument (Lemma~\ref{l:ss}) proves the result.
For example, we find such cycles in 6-regular minimal flag triangulations of the torus in Lemma~\ref{l:deg6}.

The second method is to find a `seed' subcomplex $A$ with $b^{(2)}_{1}(W_A) = 0$, so that $L$ is obtained from $A$ by adding vertices whose links intersect the previous subcomplex in non-simplices.
This method suffices for $\rr \mathrm{P}^{2}$.

In general, we use the following definition.
\begin{Definition}
    A full subcomplex $M$ of a flag complex $L$ is \emph{sated} if for every vertex $s \in L - M$, $\Lk s \cap M$ is a (possibly empty) simplex.
\end{Definition}
For a full subcomplex $A \subset L$, we define $M(A)$ to be the intersection of all sated subcomplexes containing $A$.
(Note that the intersection of two sated subcomplexes is sated.) In other words, $M(A)$ is the maximal full subcomplex obtained from $A$ by adding vertices of $L$ whose links intersect the previous subcomplex in non-simplices.

If a seed subcomplex $A$ has $b^{(2)}_{1}(W_A) = 0$, then by an easy Mayer--Vietoris argument, $b^{(2)}_{1}(W_{M(A)}) = 0$.
We list several possible such seeds in Corollary~\ref{c:annulussimplex}.

The proof for the torus goes by finding a suitable seed $A$, and then finding $3$ disjoint $4$-cycles under the assumption $M(A) \ne L$.
Our main technical tool, Lemma~\ref{l:M<L}, produces a nontrivial $4$-cycle, which in subsequent lemmas we use either to find three disjoint $4$-cycles or to improve the seed.
Finally, in Corollary~\ref{c:deg4} and Lemma~\ref{l:deg5}, we analyze the degree $4$ or $5$ vertices to get a suitable seed, which completes the proof.

\subsection{Sated subcomplexes}

We now collect some properties of certain sated subcomplexes.
The first lemma follows straight from the definition.
\begin{lemma}\label{l:not2}
    Suppose $L$ is a flag triangulation of a surface and $M \subset L$ is a sated subcomplex.
    If an edge $e$ in $L$ has an endpoint not in $M$, then one of the two vertices in $\Lk e$ is not in $M$.
\end{lemma}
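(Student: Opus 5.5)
The argument is a direct unwinding of the definition of sated, using the fact that in a triangulated surface every edge lies in exactly two triangles. Let $e = uv$ be an edge of $L$ with $u \notin M$. Since $L$ is a flag triangulation of a surface, $\Lk e$ consists of exactly two vertices, say $x$ and $y$. Together with $v$, these are three distinct vertices of $\Lk u$, and $uvx$ and $uvy$ are the two triangles containing $e$.

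I argue by contradiction: assume both $x$ and $y$ lie in $M$, and split into two cases according to whether $v \in M$.

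If $v \in M$, then $v, x, y$ all lie in $\Lk u \cap M$. Since $u \notin M$ and $M$ is sated, $\Lk u \cap M$ is a simplex, so $v, x, y$ span a simplex of $L$. Hence $u, v, x, y$ are pairwise adjacent, and by flagness they span a $3$-simplex. This is impossible in a $2$-dimensional complex.

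If $v \notin M$, I use the links of both $u$ and $v$. Both endpoints of $e$ are outside $M$, so $\Lk u \cap M$ and $\Lk v \cap M$ are simplices. Both contain $x$ and $y$, so $x$ and $y$ are adjacent. Then $u, v, x, y$ are again pairwise adjacent: $uv$ is the edge $e$, the edges $ux, uy, vx, vy$ come from the two triangles on $e$, and $xy$ was just established. Flagness again gives a $3$-simplex, the same contradiction. In fact only one endpoint's link is needed here: $x, y \in \Lk u \cap M$ already forces $xy$ to be an edge.

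So the proof is a single uniform step: $x, y \in \Lk u \cap M$, which is a simplex since $u \notin M$, forces $xy \in L$, and then the $4$-clique $\{u, v, x, y\}$ contradicts the dimension of the surface. Nothing here is a real obstacle. The only point needing care is that $x \neq y$ and that both are adjacent to $u$ and $v$, which is exactly the statement that $\Lk e$ is two points in a surface triangulation.
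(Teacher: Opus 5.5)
Your proof is correct and is exactly the unwinding of the definition the paper has in mind; the paper gives no argument beyond saying the lemma ``follows straight from the definition.'' The key step is that $x,y\in\Lk u\cap M$ with $u\notin M$ forces $xy\in L$, so $\{u,v,x,y\}$ spans a $3$-simplex by flagness, which is impossible in a surface; as you yourself note, the case split on whether $v\in M$ is unnecessary.
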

\begin{lemma}\label{l:notinM}
    Suppose $L$ is a flag triangulation of a surface and $M \subset L$ is a sated subcomplex.
    Suppose $qr$ is an edge in $\Lk m - M$, where $m \in M$, and suppose $qrst$ is a nontrivial $4$-cycle.
    Then at least one of $s$, $t$ is not in $M$.
\end{lemma}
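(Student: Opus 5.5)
Suppose for contradiction that both $s,t \in M$. The plan is to show that $m$, $q$ or $r$ is then forced into a configuration that violates satedness or Lemma~\ref{l:not2}. The key observation is that $q,r \notin M$ while $s,t \in M$. Since $M$ is sated, $\Lk q \cap M$ and $\Lk r \cap M$ are simplices.

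First apply this to $q$. In the $4$-cycle $qrst$ the neighbours of $q$ are $r$ and $t$ (say the cycle is $q$--$r$--$s$--$t$--$q$). Also $m$ is adjacent to $q$ and lies in $M$. So both $m$ and $t$ lie in $\Lk q \cap M$, and since this is a simplex, $m$ and $t$ are adjacent (or equal). The same argument at $r$ shows that $m$ and $s$ are adjacent (or equal).

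Next compare $m$ with the cycle. If $m \in \{s,t\}$, say $m=s$, then $q$ is adjacent to both $m=s$ and $t$. Together with the edge $qr$ this makes $q$ adjacent to three consecutive vertices of the $4$-cycle, and by flagness the cycle is filled by the triangles $qrs$ and $qst$, which contradicts nontriviality.

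Otherwise $m$ is distinct from $q,r,s,t$ and is adjacent to all four vertices. The cycle $qrst$ then lies in the star of $m$: it sits inside $\Lk m \cup \{m\}$, and in fact the triangles $mqr, mrs, mst, mtq$ are all present by flagness. So the cycle is nullhomotopic in $\St m$, again contradicting nontriviality. Hence one of $s,t$ is not in $M$.

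The main point to check is that the cycle really bounds the cone disk, that is, that $m \ne q,r,s,t$ and that the four triangles exist. This follows immediately from flagness once all the adjacencies are established, so I do not expect a serious obstacle.
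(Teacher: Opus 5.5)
Your proof is correct and is essentially the paper's argument. Satedness applied at $q$ and at $r$ forces $t$ and $s$ to be adjacent to $m$, so the cycle $qrst$ lies in $\Lk m$ (the paper says it is $\Lk m$) and is therefore trivial. Your separate treatment of the degenerate case $m\in\{s,t\}$ is a small extra detail that the paper leaves implicit: it is excluded because nontrivial $4$-cycles in a flag complex are full.
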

\begin{proof}
    Indeed, if $s$ is in $M$, then it must be adjacent to $m$, since $s$ and $m$ are in the link of $r$, and $r$ is not in $M$.
    Similarly for $t$.
    So, if both are in $M$, the cycle $qrst$ is the link of $m$, hence trivial, a contradiction.
\end{proof}
\begin{lemma}\label{l:M=L}
    Suppose $L$ is a minimal flag triangulation of a torus, $M \subset L$ is a sated subcomplex, and $C \subset M$ is a nontrivial $4$-cycle.
    If $M$ contains the links of all vertices of $C$, then $M = L$.
\end{lemma}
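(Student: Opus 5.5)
The plan is to argue by contradiction. Assume $M \neq L$. Since $L$ is connected, there is an edge $ms$ with $m \in M$ and $s \notin M$. We then produce a nontrivial $4$-cycle disjoint from $C$ that contains vertices outside $M$, and run an extremal argument in the annulus between it and $C$.

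First we study $\Lk m - M$. Since $M$ is sated, $\Lk s \cap M$ is a simplex containing $m$, so it is either $m$ alone or an edge through $m$. By Lemma~\ref{l:not2}, at least one of the two vertices of $\Lk(ms)$ lies outside $M$. Hence every vertex of $\Lk m - M$ has a neighbour in $\Lk m - M$, so $\Lk m - M$ contains an edge $qr$.

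Next we find a second cycle. By Corollary~\ref{c:edge}, $qr$ lies in a nontrivial $4$-cycle $D = qrst$. By Lemma~\ref{l:notinM}, at least one of $s,t$ is not in $M$. We check that $D$ is disjoint from $C$:
\begin{itemize}
\item $q$ and $r$ are not in $M \supseteq C$.
\item If $s \in C$, then $r \in \Lk s \subset M$, a contradiction. The same argument, using $q$, rules out $t \in C$.
\end{itemize}
So $C$ and $D$ are disjoint simple nontrivial closed curves on the torus, and therefore isotopic. Together they bound an annulus $A \subset L$, which we take on the side containing $m$.

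The key step is an extremal choice. Among all such pairs (a vertex $m \in M$ with an outside edge $qr \subset \Lk m$, and a nontrivial $4$-cycle $D \supset qr$ disjoint from $C$), choose one for which the annulus $A$ between $C$ and $D$ has the fewest triangles. The vertex $m$ lies in $A$ and is adjacent to $D$. Since $D$ contains a vertex outside $M$ and $C \subset M$ has all its links in $M$, the interior of $A$ must contain a transition between $M$ and $L - M$. I would pick a vertex $m' \in M$ in the interior of $A$ with an outside neighbour, chosen closer to $C$ than $D$ is. Repeating the construction of the previous two paragraphs at $m'$ gives a new nontrivial $4$-cycle $D'$, disjoint from $C$. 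The aim is to show that $D'$ lies in $A$, so that the annulus between $C$ and $D'$ is strictly smaller, contradicting minimality.

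The containment $D' \subset A$ should come from flagness and fullness. The $4$-cycles $D$ and $D'$ are full, and $4$-cycles with crossing edges in a flag surface force triangles; this should prevent $D'$ from leaving $A$ through $D$ except along shared vertices. If $D'$ and $D$ share vertices, one can replace $D'$ by a cut-and-paste $4$-cycle, in the spirit of Lemma~\ref{l:vertex}. Where needed, Lemma~\ref{l:full cycle} and Lemma~\ref{l:4-cycle} rule out interior structure in the disks cut off. In the base of the recursion, the annulus consists of a single layer between two $4$-cycles. There every vertex of $D$ is adjacent to $C$, so its link meets $M$ in a non-simplex. Since $M$ is sated, this forces $D \subset M$, contradicting $q \notin M$.

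The main obstacle is this containment and shrinking step. The difficulty is controlling how the new cycle $D'$ sits relative to $D$, including degenerate cases where they touch, while keeping the annulus strictly decreasing. Everything else follows directly from the sated property and Lemmas~\ref{l:not2} and~\ref{l:notinM}.
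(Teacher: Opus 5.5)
There is a genuine gap: the shrinking step carries the whole proof, and you only assert it.

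Your preliminary steps are fine. Lemma~\ref{l:not2} gives an edge $qr \subset \Lk m - M$. The nontrivial $4$-cycle $D \supset qr$ is disjoint from $C$, because every vertex adjacent to $C$ lies in $M$. But after that, everything rests on the claim that the shrinking ``should come from flagness and fullness,'' and it is not clear it can work at all.

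\begin{itemize}
\item Nothing forces a transition strictly closer to $C$. In a minimal configuration every interior vertex of $A$ may lie in $M$, with the only vertices outside $M$ on $D$ itself. Then the only candidates $m'$ are neighbours of $D \setminus M$. Any edge $q'r' \subset \Lk m' - M$ either lies on $D$ (so possibly $D' = D$) or leaves $A$ through $D$.
\item Even when some $D' \neq D$ exists, it is homologous to $D$ but may still cross it at two vertices. After any cut-and-paste you would have to check three things: the new cycle still has a vertex outside $M$; $m'$ lies on its $C$-side (otherwise the annulus ``on the side containing $m'$'' is the complementary, larger one); and the triangle count actually drops.
\end{itemize}

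None of this is done, so the minimal-counterexample argument has no inductive step. Your base case is correct but trivial: a vertex of $D$ adjacent to $C$ is in $M$ by the link hypothesis alone.

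The missing idea is to use Lemma~\ref{l:vertex} together with intersection numbers on the torus, which makes any induction unnecessary. This is the paper's argument.
\begin{itemize}
\item Take an arbitrary vertex $v$. Lemma~\ref{l:vertex} gives two nontrivial $4$-cycles crossing exactly at $v$. Their intersection number is $\pm 1$, so their classes form a basis of $H_1(T^2)$.
\item The class of $C$ is nonzero, so it has nonzero intersection number with one of them, say $C'$. Thus $C'$ meets $C$ in some vertex $c$.
\item The two neighbours of $c$ on $C'$ lie in $\Lk c \subset M$, so $C'$ has at least three vertices in $M$.
\item Since $C'$ is full, if the fourth vertex is not already in $M$, its link contains two non-adjacent vertices of $M$. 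By satedness it is in $M$ as well.
\end{itemize}
Therefore $v \in M$, and since $v$ was arbitrary, $M = L$.
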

\begin{proof}
    Since $L$ is a torus, for any vertex $v$, the two nontrivial $4$-cycles through $v$ provided by Lemma~\ref{l:vertex} are homologically different, and hence one of them must cross $C$.
    Therefore, by the assumption on links, it has at least three vertices in $M$.
    Then $v$ is also in $M$.
\end{proof}
\begin{lemma}\label{l:M<L}
    Suppose $L$ is a minimal flag triangulation of a torus, $M \subsetneq L$ is a proper sated subcomplex, and $C \subset M$ is a nontrivial $4$-cycle.
    Then there exists a vertex $c \in C$ and a nontrivial $4$-cycle $qrst$, such that $qr \subset \Lk c$ and $q, r, s \notin M$.
    Moreover, if $t \in M$, then $t \in \Lk c$ and $s \notin \Lk c$.
\end{lemma}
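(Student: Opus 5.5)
By Lemma~\ref{l:M=L}, since $M\neq L$, the links of the vertices of $C$ are not all contained in $M$. So we may pick $c\in C$ and a vertex $q\in \Lk c - M$. The plan is to manufacture an edge $qr$ of $\Lk c$ with both endpoints outside $M$, and then apply Corollary~\ref{c:edge} to that edge.

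First we produce $r$. The edge $cq$ has the endpoint $q\notin M$, so by Lemma~\ref{l:not2} one of the two vertices of $\Lk(cq)$ is not in $M$. These two vertices are exactly the neighbours of $q$ in the cycle $\Lk c$. Call one outside $M$ by $r$; then $qr\subset \Lk c$ is an edge with $q,r\notin M$.

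Next we produce the $4$-cycle. Since $L$ is a torus it is not an octahedron, so Corollary~\ref{c:edge} gives a nontrivial $4$-cycle through $qr$, which we write as $qrst$. By Lemma~\ref{l:notinM}, applied with $m=c$, at least one of $s,t$ lies outside $M$. Relabelling the cycle as $rqts$ if needed (this swaps the roles of $q,r$ and of $s,t$, and both $q,r\notin M$), we may assume $s\notin M$. This proves the first assertion.

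For the ``moreover'' part, suppose $t\in M$. Both $t$ and $c$ lie in $\Lk q$, since $qt$ is an edge of the cycle and $q\in\Lk c$. Because $q\notin M$ and $M$ is sated, $\Lk q\cap M$ is a simplex. It contains $c$ and $t$, so either $t=c$ or $t$ is adjacent to $c$.

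We rule out $t=c$. In that case the cycle $qrsc$ would contain the vertices $q,r\in \Lk c$ together with $c$, hence the triangle $cqr$. Since $s$ is adjacent to $c$, the $4$-cycle $qrsc$ contains the chord $rc$ and so is not full. But a nontrivial $4$-cycle in a flag surface is full, as the paper notes. Hence $t\in\Lk c$.

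It remains to show $s\notin \Lk c$. If $s\in\Lk c$, then $q,r,s,t$ all lie in $\Lk c$, and $qrst$ is a full $4$-cycle inside the cycle $\Lk c$, so it must equal $\Lk c$. That makes it trivial, a contradiction.

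The main obstacle I expect is the relabelling and degenerate-coincidence bookkeeping: making sure that $t$ or $s$ cannot coincide with $c$, or with other vertices in unexpected ways. Fullness of nontrivial $4$-cycles should dispose of these cases.
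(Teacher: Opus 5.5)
Your proof is correct and follows the paper's argument step for step: Lemma~\ref{l:M=L} gives $c$ and $q$, Lemma~\ref{l:not2} gives $r$, Corollary~\ref{c:edge} and Lemma~\ref{l:notinM} (with $m=c$) give the cycle and the relabelling, and satedness of $M$ at $q$ together with nontriviality of $qrst$ give the ``moreover'' part. You are slightly more careful than the paper in ruling out $t=c$, which the paper passes over, but in that case the chord $rc$ exists because $r\in\Lk c$, not because $s$ is adjacent to $c$.
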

\begin{proof}
    By Lemma~\ref{l:M=L}, there exist $c \in C$ and $q \in \Lk c - M$.
    By Lemma~\ref{l:not2}, there exists $r \in \Lk(qc) - M$.
    Consider a nontrivial $4$-cycle $qrst$ through the edge $qr$ provided by Corollary~\ref{c:edge}.
    By Lemma~\ref{l:notinM}, one of $s$, $t$ is not in $M$.
    So, after possibly renaming $q \leftrightarrow r$ and $s \leftrightarrow t$, we can assume that $s \notin M$.
    To finish the proof, if $t \in M$, then $c, t \in \Lk q$ and $q \notin M$ imply that $t \in \Lk c$, and therefore $s \notin \Lk c$ because $qrst$ is nontrivial.
\end{proof}
\begin{lemma}\label{l:acyclicsubcomplex}
    Suppose $L$ is a minimal flag triangulation of a torus, and $M \subsetneq L$ is a proper sated subcomplex, containing two disjoint nontrivial $4$-cycles $C_1$ and $C_2$.
    Then $L$ contains either three disjoint nontrivial $4$-cycles or a full pinched $4$-annulus.
\end{lemma}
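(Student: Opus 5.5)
Apply Lemma~\ref{l:M<L} to $M$ with $C = C_1$. This gives a vertex $c \in C_1$ and a nontrivial $4$-cycle $D = qrst$ with $qr \subset \Lk c$ and $q, r, s \notin M$. If also $t \notin M$, then $D$ is disjoint from $M$, and in particular from $C_1$ and $C_2$. So $C_1, C_2, D$ are three disjoint nontrivial $4$-cycles and we are done. The remaining case is $t \in M$. Then Lemma~\ref{l:M<L} says $t \in \Lk c$ and $s \notin \Lk c$, so $D$ meets $M$ exactly in $t$, and $t$ is adjacent to $c$.

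Next we use the torus topology. Disjoint nontrivial simple closed curves on $T^2$ are homologous up to sign, so $C_1$ and $C_2$ are parallel. They cut $L$ into two annuli $A$ and $A'$. The cycle $D$ passes through $q, r$, which lie in $\Lk c$ but off $C_1$, and through $t \in \Lk c$. Since $D$ is nontrivial and meets $C_1 \cup C_2$ in at most the single vertex $t$, it cannot cross either cycle transversally. Otherwise its intersection number with the parallel class would be nonzero, which forces at least one crossing with each of $C_1$ and $C_2$, that is, two distinct points. So $D$ is parallel to $C_1$ and lies in the closure of one annulus, say $A$. Because $t$ is adjacent to $c$ and $q,r \in \Lk c$, the natural guess is that $t \in C_1$, or at least that $D$ touches $C_1$ at $t$. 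I would verify this using that $M$ is sated: the vertices $q,r,s \notin M$ have links meeting $M$ only in simplices, which should force $t \in C_1$, or else $t$ lies in the interior of $A$ and can be handled as below.

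Suppose $D$ is tangent to $C_1$ at $t$. Then $D \cup C_1$ bounds a pinched annulus in $A$ with $4$-cycle boundaries. Two things remain. First, it must be full. Both boundaries are full, being nontrivial $4$-cycles in a flag complex, so any extra edge would have to join interior vertices. Here I would argue as in Lemma~\ref{l:full cycle}, using minimality and the fact that a full $4$-cycle with an edge inside an annulus is either trivial (hence a vertex link, by Lemma~\ref{l:4-cycle}) or parallel to the boundary. Second, if the pinched annulus is not itself full, then some parallel nontrivial $4$-cycle passes through its interior. In that case I would replace $C_1$ by an innermost such cycle and repeat the argument, which terminates since $L$ is finite.

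If instead $t \notin C_1 \cup C_2$, then $D$ lies in the interior of $A$ apart from nothing on the boundary, so $D$ is disjoint from both $C_1$ and $C_2$. This again gives three disjoint nontrivial $4$-cycles. The main obstacle I expect is the middle step: controlling where $t$ sits relative to $C_1$ and $C_2$, and showing that the region between $D$ and $C_1$ is a genuine full pinched $4$-annulus rather than something degenerate. This needs careful use of fullness of $D$ together with Lemmas~\ref{l:notinM} and~\ref{l:not2}.
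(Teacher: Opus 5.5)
Your case split is the right one and matches the paper's. Lemma~\ref{l:M<L} gives $D=qrst$ with $q,r,s\notin M$. Either $t\notin C_1\cup C_2$, giving three disjoint cycles, or after relabeling $t\in C_1$ and $D$ touches $C_1$ only at $t$. Your detour about whether satedness ``forces'' $t\in C_1$ is unnecessary, since you handle both alternatives and $t\in C_2$ is symmetric.

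\textbf{The gap.} It lies in the one substantive claim of the tangent case: that the pinched annulus $P$ between $C_1$ and $D$ is full. Your diagnosis of what could fail is backwards. Edges at interior vertices of $P$ lie in $P$ automatically, because the star of an interior vertex is contained in $P$. Chords of $C_1$ or of $D$ are excluded because nontrivial $4$-cycles are full. The only edges that could spoil fullness are edges from a vertex of $D-t$ to a vertex of $C_1-t$ running outside $P$. None of the tools you propose addresses these: not minimality, not Lemma~\ref{l:full cycle}, not the ``innermost cycle'' replacement. The replacement step is also not valid in itself. Non-fullness of $P$ is not caused by a $4$-cycle passing through its interior. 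Replacing $C_1$ by another cycle would also lose both $C_1\subset M$ and the relation to $D$ supplied by Lemma~\ref{l:M<L}.

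\textbf{The missing idea.} $C_2$ does the work, which is why the paper says the pinched annulus is full ``in $L-C_2$''. Since $D\cap M=\{t\}$ and $t\in C_1$, $D$ is disjoint from $C_2$. So the complement of $C_1\cup C_2\cup D$ in $L$ has exactly three components:
\begin{itemize}
\item the interior of $P$;
\item the open annulus between $D$ and $C_2$;
\item the open annulus between $C_2$ and $C_1$ on the other side.
\end{itemize}
Take an edge $xy$ with $x\in D-t$ and $y\in C_1-t$. It is not an edge of any of the three cycles, so its interior lies in one of these components, whose closure must contain both endpoints. The closure of the $D$--$C_2$ annulus meets $C_1$ only in $t$. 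The closure of the $C_1$--$C_2$ annulus misses $D-t$. Hence $xy\subset P$.

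Triangles then come for free. A triangle of $L$ with all vertices in $P$ but not contained in $P$ would have all three vertices in $D$ or all three in $C_1$. That is impossible, since these $4$-cycles are full. So $P$ is a full pinched $4$-annulus, and no iteration or use of minimality is needed.
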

\begin{proof}
    By Lemma~\ref{l:M<L}, we have a nontrivial $4$-cycle $qrst$ with $q, r, s \notin M$.
    If $t \notin C_{1} \cup C_{2}$, then the three cycles $qrst$, $C_{1}$, and $C_{2}$ are disjoint.
    Otherwise, we can assume $t \in C_{1}$.
    Then we get a full pinched $4$-annulus between $C_{1}$ and $qrst$ in $L-C_{2}$.
\end{proof}
\begin{lemma}\label{l:crossing cycles}
    Suppose $L$ is a minimal flag triangulation of a torus, and $M \subsetneq L$ is a proper sated subcomplex, containing an interior vertex $v$: $\St v \subset M$.
    Then $L$ contains two disjoint nontrivial $4$-cycles.
    Moreover, if there exist two $4$-cycles $C_{1}$ and $C_{2}$ crossing at $v$ such that for all vertices $u_{1} \in C_{1} - v$ and $u_{2} \in C_{2} - v$ the intersections $\Lk u_{1} \cap C_{2}$ and $\Lk u_{2} \cap C_{1}$ are connected, then $L$ contains a full $4$-annulus with a simplex.
\end{lemma}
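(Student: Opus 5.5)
Since $\St v\subset M$, I will apply Lemma~\ref{l:vertex} to get two nontrivial $4$-cycles $C_1=v a_1 x_1 b_1$ and $C_2=v a_2 x_2 b_2$ crossing exactly at $v$. The neighbours $a_i,b_i$ of $v$ lie in $\Lk v\subset M$. The opposite vertex $x_i$ is adjacent to two non-adjacent vertices $a_i,b_i$ of $M$ (fullness of $C_i$). So $\Lk x_i\cap M$ is not a simplex, and satedness forces $x_i\in M$. Hence $C_1,C_2\subset M$, and both are full.

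For the first claim I will use Lemma~\ref{l:M<L} with $C=C_1$. It gives $c\in C_1$ and a nontrivial $4$-cycle $qrst$ with $q,r,s\notin M$ and $qr\subset\Lk c$. If $t\notin M$, then $qrst$ lies in $L-M$, so it is disjoint from $C_1$, which gives two disjoint cycles. If $t\in M$, then $t\in\Lk c$ and $s\notin \Lk c$. In that case I will replace $qrst$ by a homologous $4$-cycle that avoids $M$ entirely, or else note that $qrst$ meets $C_1$ only possibly at $t$. If $t\notin C_1$, they are already disjoint. If $t\in C_1$, I will instead run the argument with $C_2$, which is homologically different from $C_1$ on the torus.

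A cleaner alternative is to pick $c$ with $\Lk c\not\subset M$. Here $c\neq v$, since $\St v\subset M$. Then I would push $C_1$ off $c$ through $\Lk c$ to get a nontrivial $4$-cycle homologous to $C_1$ and disjoint from it. On a torus two disjoint homologous essential curves are allowed, so this is fine. The main obstacle is that every case has to genuinely yield disjointness. I expect to use that $qrst$ is homologous to one of $C_1,C_2$, and that at most one vertex ($t$) lies in $M$, so it can meet only one of them.

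For the ``moreover'' part, the plan is to build the annulus out of $\St v$ together with $C_1,C_2$. The connectivity hypothesis on $\Lk u_1\cap C_2$ and $\Lk u_2\cap C_1$ rules out chords between $C_1$ and $C_2$ that would obstruct fullness. Starting from the disjoint nontrivial cycle $D$ produced above, homologous to, say, $C_1$, I take the region between $C_1$ and $D$ that does not contain the other side. Lemma~\ref{l:full cycle} shows that each disk complementary piece is a cone. Combining this with $C_2$ crossing $C_1$ only at $v$ shows that the region between $D$ and $C_1$ is an annulus. Its intersection with $C_2$ contributes one extra simplex joining a vertex on one boundary to a simplex on the other boundary. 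The hard part is verifying fullness of this union, and for that I plan to use the connectedness condition to exclude extra edges.
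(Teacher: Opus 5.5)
The first claim is essentially the paper's argument. The genuine gap is in the ``moreover'' part, which never uses satedness of $M$ at the vertices $q,r,s$.

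\textbf{First claim.} Lemma~\ref{l:M<L} applied to $C_1$ gives $qrst$ with at most one vertex, $t$, in $M\supset C_1\cup C_2$. You need to state explicitly that $t\neq v$: this holds because $t$ is adjacent to $q\notin M$ while $\St v\subset M$. Together with $C_1\cap C_2=\{v\}$, this shows $qrst$ misses one of $C_1,C_2$.

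Drop the detours. If $t\in C_1$, then $qrst$ is already disjoint from $C_2$, so there is nothing to rerun. Your ``cleaner alternative'' is false: pushing $C_1$ across a single vertex $c$ only replaces $c$ by a path in $\Lk c$. The new cycle still contains the other three vertices of $C_1$, and in general it is not a $4$-cycle.

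\textbf{The ``moreover'' part.} You propose to get fullness from the connectedness hypothesis. That hypothesis only concerns vertices of $C_1\cup C_2$, and the only such vertex on the new cycle $qrst$ is $t$. Nothing in your plan controls edges from $q,r,s$ to the other cycle running around the far side of the torus. The missing ingredient is satedness of $M$ applied to $q,r,s\notin M$.

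Take $t\in C_1-v$, so $qrst\cap C_2=\emptyset$ and there are two annuli between these cycles.
\begin{enumerate}
\item Suppose $t$ has a neighbor in $C_2$, and let $A$ be the annulus containing such an edge.
\begin{itemize}
\item Connectedness of $\Lk t\cap C_2$ puts all edges from $t$ to $C_2$ into $A$.
\item Since $q,s\notin M$ are adjacent to $t\in M$, and $\Lk q\cap M$ and $\Lk s\cap M$ are simplices, every neighbor of $q$ or $s$ in $C_2$ lies in $\Lk t\cap C_2$. So those edges lie in $A$ too, via the triangles with $t$.
\item Since $r\notin M$, $\Lk r\cap C_2$ is a simplex, and this is exactly the extra simplex.
\end{itemize}
\item If $t$ has no neighbor in $C_2$, the same reasoning shows $q$ and $s$ have none either. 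Only $r$'s edges can lie outside, so one annulus is full and the other is full with a simplex.
\end{enumerate}

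In particular, the extra simplex does not come from ``the intersection with $C_2$''. Lemma~\ref{l:full cycle} also plays no role. The region between two disjoint nontrivial $4$-cycles on the torus is automatically an annulus. Since the boundary cycles are themselves full, fullness only requires checking edges between the two boundary cycles.
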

\begin{proof}
    By Lemma~\ref{l:vertex} there exist two $4$-cycles $C_{1}$ and $C_{2}$ crossing at $v$.
    Note that $C_{1} \cup C_{2} \subset M$.
    Then by Lemma~\ref{l:M<L}, taking $C_{1}$ as a $4$-cycle, we have a nontrivial $4$-cycle $qrst$ with $q, r, s \notin M$ and hence this cycle cannot go through $v$.
    Since it is nontrivial, it intersects one of $C_{1}$ or $C_{2}$ at $t$, and is disjoint from the other.
    This proves the first claim.

    To prove the second, we will assume $t \in C_{1}-v$, and then we have two annuli between $qrst$ and $C_{2}$.
    Suppose we have an edge from $t$ to $C_{2}$.
    Then we claim that the $4$-annulus $A$ between $C_{2}$ and $qrst$ which contains this edge is full with a simplex.
    Note that all such edges from $t$ are in $A$, since $\Lk t \cap C_{2}$ is connected by hypothesis.
    Since $q, s \notin M$ are adjacent to $t \in M$, edges from them to $C_{2}$ must end in $\Lk t \cap C_{2}$, and therefore must be in $A$.
    So the only possible edges between $qrst$ and $C_{2}$ outside of $A$ are from $r$, and they must end in adjacent points.

    If there is no edge from $t$ to $C_{2}$, then there are no edges from $q$ or $s$ to $C_{2}$, and one of the annuli is full and the other is full with a simplex.
\end{proof}

\subsection{\texorpdfstring{$4$-annuli}{4-annuli}}

A full $4$-annulus $A$ is \emph{maximal} if there is no full $4$-annulus $A'$ with $A \subsetneq A'$.
\begin{lemma}\label{l:full annulus}
    Suppose $L$ is a minimal flag triangulation of a torus and $A \subset L$ is a maximal full $4$-annulus.
    If $M(A) \neq L$, then $L$ contains three disjoint nontrivial $4$-cycles.
\end{lemma}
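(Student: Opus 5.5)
Let $C_1$ and $C_2$ be the two boundary $4$-cycles of $A$; they are disjoint and nontrivial, and both lie in $M := M(A)$. Since $M(A)$ is sated and proper, Lemma~\ref{l:acyclicsubcomplex} applied to $M$, $C_1$, $C_2$ already gives either three disjoint nontrivial $4$-cycles (and then we are done) or a full pinched $4$-annulus. The plan is to bypass that dichotomy and go back into its proof, using the maximality of $A$ to rule out the pinched case.

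First apply Lemma~\ref{l:M<L} to $M$ and $C_1$. This gives a nontrivial $4$-cycle $qrst$ with $q,r,s\notin M$ and $qr \subset \Lk c$ for some $c \in C_1$. If $t \notin C_1\cup C_2$, then $qrst$, $C_1$, $C_2$ are three disjoint nontrivial $4$-cycles (note that $qrst$ meets $M$ in at most $t$). So assume $t \in C_1\cup C_2$, say $t\in C_1$ after relabeling. By the last part of Lemma~\ref{l:M<L}, $t\in \Lk c$ and $s\notin \Lk c$. Since $q,r,s \notin M \supset A$, the cycle $qrst$ lies in the complementary annulus $L - \operatorname{int}A$, which is the other annulus of the torus cut along $C_1 \cup C_2$. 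Together with $C_1$ it bounds a pinched $4$-annulus $A_1$ on that side, and it is pinched at $t$.

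The key step is to show that we can push $C_1$ across instead. Consider the $4$-cycle $C_1'$ obtained from $C_1$ by replacing its vertex $c$ (or the appropriate vertex adjacent to $t$) by $q$ or $r$, using the triangles in $\Lk c$ around $qr$. Then show that $A \cup (\text{region between } C_1 \text{ and } C_1')$ is a full $4$-annulus strictly containing $A$, which contradicts maximality. To verify fullness, use that $q,r\notin M$ and that $M$ is sated. Each of $q$ and $r$ meets $M$ in a simplex, so the only edges from $q$ (resp.\ $r$) into $A$ go to a simplex of $C_1$ near $c$. This means no new chords appear between $C_1'$ and $C_2$ or inside the enlarged annulus.

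If instead the neighbors of $q$ and $r$ in $C_1$ do not allow a $4$-cycle $C_1'$ (for instance, if $\Lk q \cap M$ is just the vertex $c$), then alternate: reapply Lemma~\ref{l:M<L} with $C_2$, or with the new cycle $qrst$ in place of $C_1$. The aim is to produce a cycle disjoint from both, using the fact that $qrst$ and $C_2$ are disjoint and lie in the complementary annulus, together with Lemma~\ref{l:notinM}.

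The main obstacle is this enlargement step. One must check carefully that the region between $C_1$ and a new $4$-cycle through $q$ or $r$ is an honest annulus that is full in $L$, and that the pinching at $t$ does not obstruct the construction. The sated condition ($\Lk q\cap M$ and $\Lk r\cap M$ are simplices) is the tool I would lean on first to control all edges between the new vertices and $A$.
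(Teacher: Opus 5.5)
There is a genuine gap at your key step: the $4$-cycle $C_1'$ never exists. To replace a vertex $x\in C_1$ by $q$ (or by $r$), $q$ must be adjacent to both neighbors of $x$ in $C_1$. These two neighbors are opposite vertices of $C_1$, and they are non-adjacent because a nontrivial $4$-cycle in a flag complex is full. So $\Lk q\cap M(A)$ would not be a simplex, contradicting that $M(A)$ is sated. Your own remark, that $q$ and $r$ only see a simplex of $C_1$ near $c$, already rules this out. Hence the case you call exceptional is the only case. Your fallback of re-applying Lemma~\ref{l:M<L} to $C_2$ or to $qrst$ gives no mechanism that terminates or produces a third disjoint cycle. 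A smaller point: ``say $t\in C_1$ after relabeling'' is not available, because $c\in C_1$ was fixed when you applied Lemma~\ref{l:M<L}. The case $t\in C_2$ has to be excluded by an argument.

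The missing idea is that you had already found the enlargement. $A':=A\cup A_1$ is an annulus with boundary $qrst\sqcup C_2$, and all that remains is to show it is full. The key observation is that no vertex $v\notin M(A)$ is adjacent to both some $x\in C_1$ and some $y\in C_2$:
\begin{enumerate}
\item satedness forces $xy$ to be an edge;
\item fullness of $A$ puts $xy$ in $A$, where it is an interior edge whose two triangles lie in $A$;
\item the triangle $vxy$, which exists by flagness, then forces $v\in A$.
\end{enumerate}
Applying this to $q$, which is adjacent to $c$ and $t$, gives $t\in C_1$. Applying it to $q$ and $r$ (adjacent to $c$) and to $s$ (adjacent to $t$) shows that none of them has a neighbor in $C_2$. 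Edges from $t$ to $C_2$ lie in $A$ by fullness of $A$, and neither boundary $4$-cycle has a diagonal. So $A'$ is a full $4$-annulus strictly containing $A$, which contradicts maximality. Hence $t\notin\d A$, and $qrst$, $C_1$, $C_2$ are three disjoint nontrivial $4$-cycles. This is the paper's argument.
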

\begin{proof}
    By Lemma~\ref{l:M<L}, taking one of the boundary components as $C$, we have a nontrivial $4$-cycle $qrst$ and a vertex $c \in C$ with $q, r \in \Lk c$, and $q, r, s \notin M(A)$.
    We claim that $t \notin \d A$, hence $\d A$ and $qrst$ are three disjoint nontrivial $4$-cycles.

    Suppose $t \in \d A$.
    Then by Lemma~\ref{l:M<L}, $t \in \Lk c$.
    Note that the link of a vertex $\notin M(A)$ cannot contain vertices in different boundary components, since an edge between those vertices would either contradict the fullness of $A$ or create an empty triangle in $L$.

    This implies that $t \in C$, since $r \notin M(A)$ is adjacent to $c$ and $t$.
    It also implies that $q$, $r$, and $s$ are not adjacent to the vertices in the other boundary component, since $q, r, s \notin M(A)$, and $q$ and $r$ are adjacent to $c$, and $s$ is adjacent to $t$.
    Hence the annulus bounded by $qrst$ and the other boundary component which contains $A$ is full, contradicting the maximality of $A$.
\end{proof}
\begin{lemma}\label{l:non-full}
    Suppose $A \subset L$ is a full pinched $4$-annulus, and let $p \in \d A$ be the pinched vertex.
    Suppose $C \subset L - A$ is a nontrivial $4$-cycle, and let $A'$ and $A''$ be the two annuli bounded by $C$ and $4$-cycles of $\d A$ which do not contain the other $4$-cycle of $\d A$.
    If both $A'$ and $A''$ are non-full, then $C$ intersects $\Lk p$ in exactly two non-adjacent points.
\end{lemma}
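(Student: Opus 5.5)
The plan is to turn non-fullness of each of $A'$ and $A''$ into an edge from $p$ to $C$, one approaching $C$ through each side, and then use flagness to pin down $C \cap \Lk p$. Throughout, $L$ is a flag triangulation of a torus, as in this section. Write $\d A = C_1 \cup C_2$ with $C_1 \cap C_2 = \{p\}$, so that $A' \supset C_1$ and $A'' \supset C_2$. Since $C$ is disjoint from $A$ and nontrivial, cutting the torus along $C_1$, $C_2$ and $C$ decomposes $L$ into the three regions $A$, $A'$ and $A''$. These regions meet only along their boundary cycles, and the point $p$ is common to all three.

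The first step is to locate the extra edge. Because $L$ is flag, the nontrivial $4$-cycles $C$ and $C_1$ are full. Hence if $A'$ is not full, there is an edge $xy$ with $x \in C_1$, $y \in C$, and the open edge lying outside $A'$, so in the interior of $A \cup A''$. Suppose $x \neq p$. Near $x$, the complement of $A'$ lies in $A$, so the edge would enter the interior of $A$. It would then have to leave $A$ through a vertex of $\d A$ in order to reach $y \notin A$, which is impossible for a single edge. Therefore $x = p$, and the edge $py$ runs through $A''$, approaching $C$ from the $A''$ side. Symmetrically, non-fullness of $A''$ gives an edge $py'$ with $y' \in C$ that runs through $A'$. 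Every edge from $p$ to $C$ runs through either $A'$ or $A''$. Since $C$ is a $4$-cycle, these edges account for all of $C \cap \Lk p$.

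The second step is a parity argument. Consider any $y \in C$ joined to $p$ through $A''$ and any $y' \in C$ joined to $p$ through $A'$. The closed path formed by $py$, $py'$ and an arc of $C$ from $y$ to $y'$ crosses the configuration transversally. Going around it once, it switches between the $A'$ side and the $A''$ side exactly once at $p$, so its mod $2$ intersection number with $C_1$ (or with $C$) is $1$. Hence this loop is homotopically nontrivial. If $y = y'$, this loop can be taken to be a closed path of two edges, which is impossible. If $y$ and $y'$ are adjacent, the loop is the $3$-cycle $pyy'$, which by flagness bounds a triangle and is trivial, a contradiction. So every vertex reached from the $A'$ side is distinct from, and non-adjacent to, every vertex reached from the $A''$ side.

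In the $4$-cycle $C$, a vertex has only one non-adjacent vertex. So there is exactly one vertex from each side, and the two are opposite on $C$. Therefore $C \cap \Lk p$ consists of exactly two non-adjacent points.

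The step I expect to be the main obstacle is the intersection-number claim in the second step. One must check carefully that the edge $py$ really lies on the $A''$ side of $C_1$ near $p$, and that $py'$ lies on the $A'$ side. One must also confirm that the chosen arc of $C$ does not cross $C_1$, which holds because $C$ is disjoint from $A$. I would make this rigorous by looking at the cyclic order of edges in $\Lk p$: the edges of $C_1$ and $C_2$ at $p$ cut $\Lk p$ into four arcs, and the two edges $py$, $py'$ lie in the arcs belonging to $A''$ and $A'$ respectively.
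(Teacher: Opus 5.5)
Your proof is correct and follows the paper's argument: non-fullness of each of $A'$ and $A''$ forces an edge from $p$ to $C$ running through the opposite annulus, and flagness then rules out the two endpoints being equal or adjacent, and likewise rules out any further edge from $p$ to $C$. The only difference is that you certify the triangle $pyy'$ is empty via a mod~$2$ intersection number with $C_1$, whereas the paper reads this off directly from the two edges lying in different annuli, i.e.\ from the cyclic order in $\Lk p$ that you mention at the end.
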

\begin{proof}
    If $A'$ is not full, then there is an edge between its boundary components outside of $A'$.
    This edge must intersect the other $4$-cycle of $\d A$, and therefore must connect $p$ and $C$.
    Similarly, there is an edge for $A''$.
    Since these edges are in different annuli, if their endpoints in $C$ were adjacent, or if there was another edge from $p$ to $C$, we would get an empty triangle.
    Hence the claim.
\end{proof}
\begin{lemma}\label{l:pinchedannulus}
    Suppose $L$ is a minimal flag triangulation of a torus and $A \subset L$ is a full pinched $4$-annulus.
    If $M(A) \neq L$, then $L$ contains a full $4$-annulus.
\end{lemma}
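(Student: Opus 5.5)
We apply Lemma~\ref{l:M<L} to $M = M(A)$, with $C$ chosen to be one of the two boundary $4$-cycles of $A$. Write $\d A = C_1 \cup C_2$, where $C_1 \cap C_2 = \{p\}$ is the pinched vertex. Since $M(A) \neq L$, the lemma gives a vertex $c \in C_i$ and a nontrivial $4$-cycle $qrst$ with $qr \subset \Lk c$ and $q,r,s \notin M(A)$. It also says that if $t \in M(A)$, then $t \in \Lk c$ and $s \notin \Lk c$. The goal is to use this new cycle, together with one of $C_1, C_2$, to bound a full $4$-annulus.

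\emph{Case 1: $t \notin A$.} Then $C = qrst$ is a nontrivial $4$-cycle in $L - A$. Since $L$ is a torus and all these cycles are nontrivial and mutually non-crossing, they are parallel. So $C$ cuts off the two annuli $A'$ (between $C$ and $C_1$) and $A''$ (between $C$ and $C_2$), as in Lemma~\ref{l:non-full}.

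If one of $A'$, $A''$ is full, we are done. Otherwise Lemma~\ref{l:non-full} says that $C$ meets $\Lk p$ in exactly two non-adjacent vertices. But $p \in M(A)$, and each vertex of $C$ lying outside $M(A)$ has a link meeting $M(A)$ in a simplex. I would like to show that two of $q,r,s$ adjacent to $p$ forces some non-simplex intersection, or else places a vertex of $\{q,r,s\}$ in $M(A)$, which would be a contradiction.

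Concretely, the two neighbours of $p$ on $C$ are non-adjacent, so they are either $\{q,s\}$ or $\{r,t\}$.
\begin{itemize}
\item If they are $\{q,s\}$: the vertex $r$ lies in $\Lk c$. I would trace how $c$, $p$ and the edges of $A$ sit in $\Lk q$. The aim is to show that $\Lk q \cap M(A)$ contains a non-simplex, for instance two vertices of $C_i$, contradicting satedness.
\item If they are $\{r,t\}$: run the same argument with $r$ in place of $q$, using that $r$ is adjacent to both $c$ and $p$.
\end{itemize}
I expect this small case analysis to be the main obstacle. It may need the freedom to choose $c$ on a specific boundary cycle, or to use the alternative $q \leftrightarrow r$ relabelling.

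\emph{Case 2: $t \in A$.} Then $t \in M(A)$, so $t \in \Lk c$ and $s \notin \Lk c$. Because $q,r,s \notin M(A)$, their links meet $M(A)$ in simplices. Each of them is adjacent to something in $M(A)$ ($c$ or $t$), so as in Lemma~\ref{l:full annulus} none of them can be adjacent to vertices of both boundary cycles away from $p$. Otherwise an empty triangle appears, or fullness of $A$ fails.

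From here I argue as in the proof of Lemma~\ref{l:full annulus}. First I show that $t$ lies on the same boundary cycle $C_i$ as $c$, possibly with $t = p$. Then consider the annulus bounded by $qrst$ and the other cycle $C_j$ that contains $A$: it is full, except possibly through edges into $p$.

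\begin{itemize}
\item If $t \ne p$, the annulus between $qrst$ and $C_j$ containing $A$ is a full $4$-annulus.
\item If $t = p$, we only get a pinched annulus again. In that case I would instead take the annulus between $qrst$ and $C_i$ not containing $A$, and use the satedness constraints to check its fullness.
\end{itemize}

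In all cases we produce a full $4$-annulus, which proves the lemma.
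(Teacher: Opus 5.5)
\textbf{There is a genuine gap.} It sits exactly at the case analysis you flag, and it comes from treating the vertex $c$ supplied by Lemma~\ref{l:M<L} as arbitrary.

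\emph{Case~1.} Suppose $A'$ and $A''$ are both non-full and $c\neq p$. Satedness does not give the contradiction you want. If $c$ is a neighbour of $p$ on $C_i$ and $q$ is the third vertex of the triangle on the non-$A$ side of $cp$, then $\Lk q\cap M(A)$ is just the edge $cp$, which is a simplex. So the non-simplex you are looking for need not exist. Nothing in the output of Lemma~\ref{l:M<L} or in satedness excludes $q$ and $s$ both lying in $\Lk p$, one in each of the two sectors of $\Lk p$ outside $A$. That is precisely the configuration of Lemma~\ref{l:non-full}, and the same happens with $r,t$.

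\emph{Case~2.} The subcase $t=p$ is likewise not excluded when $c$ is a neighbour of $p$, and your remedy there fails. Now $p$ lies on $C_1$, $C_2$ and $qrst$. So the region between $qrst$ and $C_i$ is again only a pinched annulus. If $qrst$ passes through $p$ from one complementary sector to the other, it crosses $C_1\cup C_2$ and bounds no annulus with either cycle. Either way no full $4$-annulus is produced, and you have no termination argument for iterating.

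\textbf{The missing idea is a choice of $c$.} The proof of Lemma~\ref{l:M<L} works with any $c\in C$ such that $\Lk c\not\subset M$, and $p$ lies on both boundary cycles. So either $\Lk p\not\subset M(A)$, and you may take $c=p$, or $\Lk p\subset M(A)$.

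\emph{First case, $c=p$.} Here $q,r$ are adjacent vertices of $\Lk p$ on $qrst$. So Lemma~\ref{l:non-full} immediately yields a full annulus when $t\notin\d A$. If $t\in\d A$, then $t\in\Lk p$ forces $t\neq p$. The annulus between $qrst$ and the other boundary cycle that contains $A$ is then full. The neighbour of $t$ among $q,r$ is handled because it is adjacent to a vertex of $\d_1A-p$. The other one is handled because $\Lk p\cap\d_1A$ separates it from $\Lk p\cap\d_2A$ inside $\Lk p$.

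\emph{Second case, $\Lk p\subset M(A)$.} Here $q,r,s\notin\Lk p$, so $qrst$ meets $\Lk p$ in at most $t$, and Lemma~\ref{l:non-full} again settles $t\notin\d A$. Otherwise $t\neq p$, since else $s\in\Lk p$. Also $c,t$ lie in the same component of $\d A-p$, because $q$ is adjacent to both. Finally $q,r,s$ are adjacent neither to $p$ nor to the other component, so the enlarged annulus is full.

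With this dichotomy your obstructed subcases, including $t=p$, never arise.
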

\begin{proof}
    Let $p \in \d A$ denote the pinched vertex.
    Note that if $v \notin M(A)$, then $\Lk v$ cannot contain vertices in different components of $\d A - p$, since those vertices cannot be adjacent in $L$, as that would create an empty triangle in $L$.

    Then by Lemma~\ref{l:M<L}, taking one of the boundary components as a $4$-cycle, we have a nontrivial $4$-cycle $qrst$ and a vertex $c \in \d A$ with $q, r \in \Lk a$ and $q, r, s \notin M(A)$.

    We first consider the case $c=p$.
    Then, since $q, r \in \Lk p$ are adjacent, if $t \notin \d A$ we have a full $4$-annulus by Lemma~\ref{l:non-full}.

    So we can assume $t \in \d_{1} A$ and therefore $t \in \Lk p$ by Lemma~\ref{l:M<L}.
    Then the cycles $qrst$ and $\d_{2}A$ are disjoint and we get two annuli between them.
    Since $\d_{1} A$ has single point intersections with $qrst$ and $\d_{2}A$ and is homotopic to them, it stays in one of the annuli, which we will denote by $A'$.
    Then $A \subset A'$, and there are no edges from $t$ to $\d_{2} A -p$ outside of $A'$, since $A$ was full.
    Since $pt$ is an edge of $\d_{1} A$, the triangles $prt$ and $pqr$ are inside $A'$.
    Therefore the edges $pq$ and $pr$ are inside $A'$, and since $p$ is not adjacent to $s$, there are no edges from $p$ to the other boundary component outside of $A'$.
    Since $r, s \notin M(A)$ are adjacent to $t \in \d_{1}A$, there are no edges from them to $\d_{2} A -p$ outside of the annulus.
    Since $q \notin M(A)$ is adjacent to $p \in \d_{1}A$, an edge from $q$ to $\d_{2} A -p$ must end in $\Lk p \cap \d_{2}A$, which is impossible, since $q$ and $\Lk p \cap \d_{2}A$ are separated by $\Lk p \cap \d_{1}A$ in $\Lk p$.
    So the annulus $A'$ is full.

    Thus we can assume that $c \neq p$ and $\Lk p \subset M(A)$.
    Then $q, r, s \notin \Lk p$ and by Lemma~\ref{l:non-full}, $t \in \d A$ and hence $c$ and $t$ are adjacent, and we can assume $c, t \in \d_{1} A$.
    Note that $t \neq p$, since $\Lk p \subset M(A)$.
    Therefore the cycles $qrst$ and $\d_{2}A$ are disjoint and we again get an annulus $A' \supset A$ between them.
    Now $q, r, s$ cannot be adjacent to $p$ since $\Lk p \subset M(A)$.
    They also cannot be adjacent to vertices of $\d_{2}A -p$ because they are adjacent to vertices in $\d_{1}A -p$.
    As before, the edges from $t$ to $\d_{2}A$ are in $A$, hence in $A'$.
    So $A'$ is full.
    This finishes the proof.
\end{proof}

\subsection{\texorpdfstring{$6$-regular tori}{6-regular tori}}

Let $L$ be a $6$-regular flag triangulation of $T^2$, i.e.
all the vertices of $L$ have degree $6$.
Note that the universal cover of any $6$-regular $L$ is the standard 6-regular triangulation of $\rr^{2}$, made up of vertical, horizontal, and diagonal lines.
This identifies $\pi_{1}L$ with a finite index subgroup of $\zz^{2}$, and the quotient group $G$ acts on $L$ simplicially and simply transitively on vertices.
By choosing a base vertex, we identify vertices of $L$ with $G$.
Any flag triangulation of the torus has at least $12$ vertices \cite{hr91}*{Theorem 1}, \cite{bowwzz20}*{Theorem 4.8}, and hence $\abs{G} \geq 12$.
\begin{lemma}\label{l:deg6}
    If $L$ is a $6$-regular minimal flag triangulation of a torus, then $L$ contains three disjoint nontrivial $4$-cycles.
\end{lemma}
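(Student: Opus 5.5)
The plan is to work in the universal cover and reduce everything to the lattice of deck translations. Write the universal cover of $L$ as the standard $6$-regular triangulation of $\rr^{2}$, with vertex set $\zz^{2}$ and edge directions $x=(1,0)$, $y=(0,1)$, $x+y$. Then $\pi_{1}L=\Lambda\subset\zz^{2}$ has finite index, and $G=\zz^{2}/\Lambda$ has $\abs{G}\ge 12$.

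On $\zz^{2}$ use the graph (hexagonal) norm $\|(a,b)\|$, which equals $\max(\abs{a},\abs{b})$ if $ab\ge 0$ and $\abs{a}+\abs{b}$ if $ab<0$. A closed edge path in $L$ of length $k$ lifts to a path from $g$ to $g+\lambda$ with $\lambda\in\Lambda$ and $\|\lambda\|\le k$, and the path is nontrivial exactly when $\lambda\ne 0$.

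\textbf{Step 1: constraints on $\Lambda$.} First I will show that $\Lambda$ has no nonzero vector of norm $\le 3$.
\begin{itemize}
\item Norms $1$ and $2$ would give loops, double edges or non-simplicial identifications.
\item Norm $3$ would give a nontrivial $3$-cycle. In a flag complex every $3$-cycle spans a $2$-simplex, hence is trivial, so this is impossible.
\end{itemize}
Next, a trivial full $4$-cycle is by Lemma~\ref{l:4-cycle} the link of a degree-$4$ vertex, and there are none here. Hence, by minimality (or Corollary~\ref{c:edge}), each of the three edge directions lies in a nontrivial $4$-cycle. Each such cycle is a geodesic path in $\zz^{2}$ of length $4$ from $0$ to some $\lambda\in\Lambda$ with $\|\lambda\|=4$. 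Up to the $12$-element symmetry group of the hexagonal lattice, the possible $\lambda$ are of the types $(4,0)$, $(4,1)$ and $(4,2)$.

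\textbf{Step 2: the straight case.} Suppose some $\lambda$ is straight, i.e.\ a multiple $4u$ of an edge direction $u$ (after symmetry, $\lambda=(4,0)$ or $(4,4)$). Then the straight line path gives, for each $g\in G$, the $4$-cycle $g+\{0,u,2u,3u\}$, which is a coset of the order-$4$ cyclic subgroup $\langle u\rangle\le G$. These cycles are nontrivial and pairwise disjoint. Since $\abs{G}\ge 12$, there are at least three cosets, and we get three disjoint nontrivial $4$-cycles.

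\textbf{Step 3: the bent case, which I expect to be the main obstacle.} Here every norm-$4$ vector in $\Lambda$ is of type $(4,1)$ or $(4,2)$. I would use that all three directions $x$, $y$, $x+y$ must appear in geodesics to norm-$4$ lattice vectors. Together with the absence of lattice vectors of norm $\le 3$ (also applied to differences of such vectors), this should pin $\Lambda$ down to a short explicit list of lattices, each given by two generators of norm $4$. For each lattice on the list, I take a fixed geodesic path $\gamma$ from $0$ to $\lambda$ and consider its translates $C_{h}=h+\gamma$. Two translates $C_{h}$ and $C_{h'}$ are disjoint iff $h-h'\notin(\gamma-\gamma)+\Lambda$. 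So it suffices to exhibit $h_{1},h_{2}$ such that $h_{1}$, $h_{2}$ and $h_{1}-h_{2}$ all avoid the set $(\gamma-\gamma)+\Lambda$; typically I would take $h_{2}=2h_{1}$ with $h_{1}$ a short vector transverse to $\lambda$. A naive count does not suffice, since $\abs{\gamma-\gamma}$ is comparable to $12$. Instead I would use the structure of $G$ to handle this case.
\begin{itemize}
\item If $\lambda$ is primitive in $\zz^{2}$ with $\zz^{2}/\langle\lambda\rangle\cong\zz$, then $G$ is cyclic or close to it, and the translates of $\gamma$ along a transverse generator can be checked directly to be disjoint for three consecutive multiples. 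The bound $\abs{G}\ge 12$ again guarantees there is room for three.
\item If that check fails, I would re-choose $\gamma$ among the several geodesics to $\lambda$ (or switch to a different norm-$4$ lattice vector), choosing the one lying in a strip of minimal width transverse to the complementary direction.
\end{itemize}
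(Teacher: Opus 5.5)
\textbf{Verdict: genuine gap in Step 3.} Your setup is sound, and Step 2 is exactly the paper's one-direction case. The problem is Step 3, which covers the other two types, $(4,1)$ and $(4,2)$. It is a list of intentions rather than an argument: ``should pin $\Lambda$ down to a short explicit list of lattices'', ``can be checked directly'', ``if that check fails, I would re-choose $\gamma$''. The classification of lattices is neither carried out nor needed. The non-primitive type $(4,2)$ gets nothing beyond the fallback. So the proof is incomplete in precisely the cases you identified as the main obstacle.

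\textbf{Missing idea.} One nontrivial $4$-cycle $C$ suffices, provided you choose its edge ordering well. Write its steps in terms of two adjacent directions $a,b$.

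\emph{Type $(4,1)$.} You may take $C=aaab$, so $3a+b=0$ in $G$. Since $a,b$ generate $\zz^{2}$, $G$ is cyclic, generated by $a$, of order $\abs{G}\ge 12$. Then $C=\{0,a,2a,3a\}$, and $C$, $4a+C$, $8a+C$ are three disjoint cycles.

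\emph{Type $(4,2)$.} Reorder the geodesic to $C=abab$; swapping consecutive $ab\leftrightarrow ba$ moves the cycle across two triangles and keeps it a nontrivial $4$-cycle.
\begin{enumerate}
\item Now $2(a+b)=0$ in $G$, the vertex set is $\{0,a,a+b,-b\}$, and the difference set is $D=\{0,\pm a,\pm b,a+b\}$, which has only $6$ elements.
\item The third direction $a-b$ is not in $D$: each of the six possibilities would put a nonzero vector of norm $\le 2$ into $\Lambda$, contradicting your Step 1. Hence $a-b+C$ is disjoint from $C$.
\item The elements $a$ and $-b$ lie in both $D$ and $a-b+D$, so $\abs{D\cup(a-b+D)}\le 10<\abs{G}$.
\item Any $g$ outside this set gives a third translate $g+C$ disjoint from the other two.
\end{enumerate}

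Your remark that a naive count fails is correct for an ordering like $aabb$, where $\abs{D}=8$. With the ordering $abab$ the count goes through.
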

\begin{proof}
    The $\zz^2$-action on $\rr^2$ preserves the families of vertical, horizontal, and diagonal lines, so edges in $L$ have a well-defined type.
    A nontrivial $4$-cycle $C$ through the basepoint in $L$ uses at most two of these directions.
    If $C$ uses one direction, then the vertices in that cycle are identified with a subgroup of $G$ of order $4$, and since $\abs{G}\geq 12$, translating gives us two disjoint cosets, so we get three disjoint cycles.

    Suppose that the cycle uses two directions $a$ and $b$.
    If there are two $a$ edges, and two $b$ edges, then $2a + 2b = 0$ in $G$, and hence $a+b=-a-b$.
    Since we can replace consecutive edges $ab$ with $ba$ and vice versa, we can assume that $C=abab$, and the set of vertices of $C$ is $\{0, a, a+b, 2a+b=-b \}$.
    Then the set $D$ of differences between vertices in the cycle is the $6$-element set $\{0, a, -a, b, -b, a+b\}$.
    Note that the third direction $a-b$ is not in $D$; as that difference would come from non-adjacent vertices of $C$, and that would contradict fullness of the $4$-cycle.
    Thus, shifting by the third direction $a-b$ gives a disjoint copy $a-b+C$.

    To find a third copy, we need to find $g \in G$ so that $(g+C) \cap C = \emptyset$ and $(g+C) \cap (a-b+C) = \emptyset$.
    This means that $g \notin D$ and $g \notin a-b+D$.
    Since shifting $D$ by $a-b$ takes $b$ to $a$ and $-a$ to $-b$, the set $D \cup (a-b+D)$ has at most $10$ elements, and since $\abs{G} \geq 12$, we can find such a $g$.

    If there are three $a$ edges and one $b$ edge, we can take $C=aaab$.
    Then $3a + b = 0$ in $G$, so $G$ is cyclic and generated by $a$, and the order of $a$ is $\geq 12$.
    The set of vertices of $C$ is then $\{0, a, 2a, 3a\}$, and shifting by $4a$ and $8a$ then gives three disjoint cycles.
\end{proof}

\subsection{\texorpdfstring{$\F$-$L^{2}$-Betti numbers}{F-L2-Betti numbers}} First, we need the following well-known observation.
\begin{lemma}\label{l:fields}
    Suppose $L$ is a flag complex, and $\F$ is a field.
    If $b_{1}^{(2)}(W_{L}; \F)=0$, then $b_{1}^{(2)}(W_{L}; \F')=0$ for any field $\F'$.
\end{lemma}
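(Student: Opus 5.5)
The plan is to work with a single complex, $X=P_L$, and $G=C_L$, since $b^{(2)}_1(W_L;\F)$ is $2^{-\abs{S}}\,b_1(P_L;D_{\F C_L})$. Over $D_{\F C_L}$ the first Betti number only involves the chains in degrees $0,1,2$ of $D_{\F C_L}\otimes C_*(\gS_L)$. The $2$-skeleton of $\gS_L$ depends only on the $1$-skeleton of $L$, because squares of $\gS_L$ correspond to edges of $L$. So the question is really about the rank of a single matrix over $\Z C_L$, namely the boundary map $\d_2$, viewed over different skew fields.

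First I would record the easy direction. By the (Universal coefficients) bullet, $b_1(P_L;D_{\Q C_L})\le b_1(P_L;D_{\F C_L})$ for every $\F$. Hence vanishing over any $\F_p$ implies vanishing over $\Q$. Since $D_{\F C_L}$ only depends on the characteristic (weak universality applied to $\Fp\subset\F$, or $\Q\subset \F$, by exactness of tensoring over skew fields), it suffices to prove that vanishing over $\Q$ implies vanishing over every $\F_p$.

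For that direction I would use the infimum formula along a cofinal chain of finite index normal subgroups $C_n\subset C_L$ pulled back from $BC_L$, with $P_n\to P_L$ the corresponding covers. In degree one the universal coefficient theorem has no Tor correction, since $H_0(P_n;\Z)$ is free. This gives
\[
b_1(P_n;\Fp)=\operatorname{rank} H_1(P_n;\Z)+\dim_{\Fp}\bigl(H_1(P_n;\Z)\otimes\Fp\bigr)_{\mathrm{tors}}.
\]
The rank term divided by the index tends to $b_1(P_L;D_{\Q C_L})=0$ (Lück approximation over $\Q$). So the whole problem reduces to showing that $p$-torsion in $H_1$ grows sublinearly along a suitable chain. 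The natural choice is a chain of $p$-power index subgroups: $C_L$ is residually $p$ for every $p$, being residually torsion-free nilpotent. Along such a chain, the (Monotone in $p$-covers) bullet makes $b_1(P_n;\Fp)/\abs{C_L:C_n}$ nonincreasing, so the limit is the infimum, which equals $b_1(P_L;D_{\Fp C_L})$ by the infimum formula.

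The main obstacle is exactly this torsion control. My first attempt would avoid general torsion growth and instead use the special structure of RACGs in degree one. Vanishing of $b_1^{(2)}$ is governed by $H_0$-type and $H_1$-type information about links and the $1$-skeleton of $L$. By a Mayer--Vietoris decomposition of $\gS_L$ along the mirrors, as in the sated-subcomplex argument, I would try to express the condition $b_1^{(2)}(W_L;\F)=0$ as a combinatorial condition on $L^{(1)}$ that makes no reference to $\F$. If that identification goes through, field independence is immediate; if not, I would fall back on bounding the $p$-torsion of $H_1(P_n;\Z)$ by the number of $2$-cells modulo the rank of $\d_2$ over $\Q$.
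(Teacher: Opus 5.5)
Your reduction agrees with the first line of the paper's proof. By universal coefficients, vanishing over any field gives vanishing over $\Q$, so the lemma amounts to showing that $b_1^{(2)}(W_L;\Q)=0$ forces vanishing over $\F_p$, i.e.\ sublinear growth of $p$-torsion in $H_1$ of suitable finite covers of $P_L$. But that is the entire content of the lemma, and neither of your two routes supplies it.

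\textbf{The Mayer--Vietoris route.} The Mayer--Vietoris and sated-subcomplex arguments only propagate vanishing from a seed. They give sufficient conditions, so they cannot produce an if-and-only-if, field-independent criterion for $b_1^{(2)}(W_L;\F)=0$ in terms of $L^{(1)}$.

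\textbf{The fallback.} Read as bounding the $p$-torsion of $H_1(P_n;\Z)$ by $c_2(P_n)-\operatorname{rank}_\Q\d_2$, it fails in two ways:
\begin{enumerate}
\item It is false in general: for the Moore space $S^1\cup_p D^2$ this quantity is $0$ while $H_1=\Z/p$.
\item It would be useless anyway. One has $c_2-\operatorname{rank}_\Q\d_2=b_2(P_n;\Q)+\operatorname{rank}_\Q\d_3$, and after dividing by the index this is at least $b_2(P_n;\Q)/[C_L:C_n]\to b_2(P_L;D_{\Q C_L})$. That limit is nonzero, e.g.\ for flag triangulations of the torus.
\end{enumerate}
Counting cells cannot detect torsion growth.

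\textbf{The missing idea} is virtual algebraic fibering, which is what the paper uses.
\begin{enumerate}
\item $W_L$ is virtually RFRS, so Kielak's theorem turns $b_1^{(2)}(W_L;\Q)=0$ into a finite-index subgroup $H$ with an epimorphism $H\to\Z$ whose kernel $K$ is finitely generated. You may take $H\subset C_L$ by intersecting.
\item Argue as in L\"uck's mapping torus theorem (the paper cites \cite{aos24}*{Theorem 2.10}). The covers of $P_L$ with fundamental group $\ker(H\to\Z\to\Z/n)\cong K\rtimes n\Z$ satisfy $\dim_{\F'}H_1\le\dim_{\F'}H_1(K;\F')+1$. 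This bound is independent of $n$, and it is finite for \emph{every} field $\F'$ because $K$ is finitely generated.
\item The infimum formula then kills the first $D_{\F' H}$-Betti number of the cover of $P_L$ corresponding to $H$.
\item Multiplicativity transfers this to $b_1^{(2)}(W_L;\F')=0$.
\end{enumerate}
This is exactly the torsion control you were looking for. It comes from a very special chain of cyclic covers adapted to a fibration, not from an estimate along an arbitrary $p$-chain.
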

\begin{proof}
    By the universal coefficients we have $b^{(2)}_{1}(W_{L}; \Q) =0$.
    Then, Kielak's algebraic fibering theorem \cite{k20} implies $W_{L}$ virtually algebraically fibers, and hence by a version of L\"uck's mapping torus theorem \cite{aos24}*{Theorem 2.10} and multiplicativity, we get $b^{(2)}_{1}(W_{L}; \F') =0$ for any field $\F'$.
\end{proof}
\begin{proposition}\label{p:acyclic}
    $W_{L}$ is $\F$-$L^2$-acyclic if $L$ is a flag triangulation of:
    \begin{enumerate}
        \item A $2$-sphere.
        \item A $2$-disk with $4$-cycle boundary.
        \item A $4$-annulus, possibly pinched or edge-pinched.
    \end{enumerate}
\end{proposition}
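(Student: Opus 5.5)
The plan is to prove (1) first and then deduce (2) and (3) from it by gluing cones onto the boundary $4$-cycles and applying Mayer--Vietoris.

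\textbf{Step 1: the sphere.} By Corollary~\ref{c:sphere}, repeatedly contracting admissible edges of a flag triangulation $L$ of $S^2$ ends at the octahedron $O^2$. Every edge link is $S^0$, so $W_{\Lk e}$ is the infinite dihedral group, which is $\F_2$-$L^2$-acyclic. Theorem~\ref{t:inequality} then gives $b^{(2)}_*(W_L;\F_2)\le b^{(2)}_*(W_{O^2};\F_2)$. The right side is $0$ because $P_{O^2}$ is a $3$-torus. Hence $b^{(2)}_1(W_L;\F_2)=0$, and Lemma~\ref{l:fields} gives $b^{(2)}_1(W_L;\F)=0$ for every field $\F$.

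For the remaining degrees over an arbitrary $\F$, I would argue as follows.
\begin{itemize}
\item $b^{(2)}_0=0$, since $W_L$ is infinite.
\item $b^{(2)}_3=b^{(2)}_0=0$ by Poincar\'e duality for the closed aspherical $3$-manifold $P_L$ with skew-field coefficients. If orientation is an issue over $\F\neq\F_2$, I would pass to the orientation double cover and use multiplicativity.
\item $b^{(2)}_2=0$ from the Euler characteristic. A flag triangulation of $S^2$ has $E=3V-6$ and $F=2V-4$, so $\chi(W_L)=1-V/2+E/4-F/8=0$, which forces $b^{(2)}_2=b^{(2)}_1+b^{(2)}_3=0$.
\end{itemize}

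\textbf{Step 2: the disk.} Let $D$ be a flag disk with $4$-cycle boundary $C$.

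If $C$ is not full in $D$, a chord splits $D$ into two disks bounded by $3$-cycles. By flagness each of these is a single triangle, so $D=\text{edge} * S^0$ and $W_D=(\zz/2)^2\times D_\infty$, which is acyclic.

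If $C$ is full, I form $S=D\cup_C \Cone(C)$. This is a flag triangulation of $S^2$, because the only new simplices contain the cone point and $C$ is full. The Coxeter group then splits as an amalgam $W_S=W_D *_{W_C} W_{\Cone C}$. Here $W_C=D_\infty\times D_\infty$ and $W_{\Cone C}=W_C\times\zz/2$ are both acyclic, and $W_S$ is acyclic by Step 1. The Mayer--Vietoris sequence for skew-field Betti numbers (the Davis complex of $S$ is the union of translates of those of $D$ and $\Cone C$ along that of $C$) then gives $H_*(W_D)\oplus H_*(W_{\Cone C})\cong H_*(W_S)=0$. So $W_D$ is acyclic.

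\textbf{Step 3: the annuli.} For a $4$-annulus $A$, possibly pinched or edge-pinched, I cone off both boundary $4$-cycles $C_1,C_2$ with distinct cone points. The result is a triangulated $S^2$: in the pinched and edge-pinched cases the two cones meet along a vertex or an arc, and the union is still a sphere.

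This sphere is flag provided each $C_i$ is full in $A$. The two cone points are non-adjacent, and any empty triangle would have to use a cone point together with a chord of some $C_i$. If some $C_i$ has a chord inside $A$, I would cut $A$ along it into smaller pieces, each a disk with $4$-cycle boundary or a triangle-like piece, and handle them by Step 2 and Mayer--Vietoris.

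In the main case, applying Mayer--Vietoris twice, with the acyclic groups $W_{C_i}$ and $W_{\Cone C_i}$, transfers acyclicity from the sphere to $W_A$. In the pinched and edge-pinched cases the intersection $W_{C_1\cap C_2}$ that appears is a finite group. I would glue one cone at a time: first the cone on $C_1$ along $C_1$, then the cone on $C_2$ along $C_2$, whose intersection with the previous piece is still just $C_2$. This keeps every intersection term an acyclic $W_{C_i}$.

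\textbf{Main obstacle.} I expect the hardest points to be two. The first is justifying flagness of the glued spheres in the degenerate (chord, pinched, edge-pinched) cases. The second is the top-degree vanishing in Step 1 over fields other than $\F_2$, where Poincar\'e duality for the twisted skew-field coefficients must be checked. If the duality step causes trouble, I would instead use the Euler characteristic together with the vanishing of $b^{(2)}_1$ and $b^{(2)}_0$ in the dual (cohomological) form.
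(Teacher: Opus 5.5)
Your proposal is correct and is essentially the paper's proof. You contract to the octahedron and apply Theorem~\ref{t:inequality} over $\F_2$, then pass to other fields via Lemma~\ref{l:fields} and duality, where your vanishing Euler characteristic for $b^{(2)}_2$ is a harmless substitute for the paper's $b^{(2)}_2=b^{(2)}_1$ by Poincar\'e duality; after that you cone off the boundary $4$-cycles with Mayer--Vietoris and use K\"unneth for the suspended-edge disk. Your chord contingency in Step 3 never actually arises: in a flag (pinched or edge-pinched) annulus, a chord of a boundary $4$-cycle would force the two triangles it spans with the cycle to be present, and this is incompatible with the annulus structure.
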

\begin{proof}
    Since by Corollary~\ref{c:sphere} the only minimal flag triangulation of $S^{2}$ is the octahedron $O^{2}$, and since $W_{O^{2}}$ is $\F$-$L^2$-acyclic for all $\F$, Theorem~\ref{t:inequality} gives the first claim over $\F_{2}$.
    For other fields, the claim follows from Lemma~\ref{l:fields} and Poincar\'e duality, since $P_{L}$ is a closed $3$-manifold.

    If $L$ is a $4$-annulus, possibly pinched or edge-pinched, or a $2$-disk with full $4$-cycle boundary, then by Mayer--Vietoris coning off the boundaries does not change $b^{(2)}_{*}$ and results in a flag $2$-sphere, so the claim follows from the first one.
    If the boundary of the $2$-disk is not full, then the disk is a suspension of an edge, and the claim follows from the K\"unneth formula.
    cxc
\end{proof}
The first claim proves the $\F$-Singer Conjecture for $W_{L}$ for flag triangulations of $S^{2}$.
The claims were known before; as far as we know, the next computation is new.
\begin{proposition}\label{p:vanishing}
    We have $b^{(2)}_{1}(W_{L}; \F) = 0$ if $L$ is a flag triangulation of:
    \begin{enumerate}
        \item A $2$-sphere with a simplex.
        \item A $4$-annulus with a simplex.
    \end{enumerate}
\end{proposition}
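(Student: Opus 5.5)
The plan is to reduce to a relatively minimal configuration with the edge-contraction inequality, Theorem~\ref{t:inequality}, and then finish with Mayer--Vietoris and the acyclicity results of Proposition~\ref{p:acyclic}. It suffices to work over $\F_2$: once $b^{(2)}_1(W_L;\F_2)=0$ is known, Lemma~\ref{l:fields} gives vanishing over every field $\F$.

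\emph{Case (1), a $2$-sphere with a simplex.} Write $L = S^2 \cup \gs$ with $\gs = u * \gt$, as in Lemma~\ref{l:s2+e}.
\begin{enumerate}
\item I will apply Theorem~\ref{t:inequality} to admissible edges $e$ not in $\gs$. For such an edge, $\Lk_L e = \Lk_{S^2} e$ is two points, and $W_{\Lk e}$ is the infinite dihedral group, which is $\F_2$-$L^2$-acyclic. So contracting $e$ can only raise $b^{(2)}_1$, and the contracted complex is again a flag sphere with a simplex.
\item Iterating step 1, I reach the relatively minimal case, where the only admissible edges lie in $\gs$. Lemma~\ref{l:s2+e} then says $\Lk_{S^2} u$ is a $4$-cycle $C$.
\item I will then compute directly by removing $u$. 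We have $L = (L-u) \cup \St_L u$, with $L-u$ a $2$-disk with $4$-cycle boundary $C$ together with $\gt$ attached, and $\Lk_L u = C \cup \gt$. Apply Mayer--Vietoris (equivalently, the long exact sequence of the pair $(L, S^2)$ with excision onto $(\St_L u, \St_{S^2} u)$). The inputs are:
\begin{itemize}
\item $b^{(2)}_*(W_{S^2}) = 0$, by Proposition~\ref{p:acyclic}(1);
\item $W_{u*C} = \zz/2 \times W_C$ is acyclic.
\end{itemize}
The relative term then reduces to the $L^2$-homology of the pair $\bigl(W_{u*(C\cup\gt)}, W_{u*C}\bigr)$. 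Because of the $\zz/2 \times (\,\cdot\,)$ factor, this is controlled by the $b^{(2)}_0$ and $b^{(2)}_1$ of $W_{C \cup \gt}$ relative to $W_C$.
\end{enumerate}

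\emph{Case (2), a $4$-annulus with a simplex.} Here $A \cup \gs$ has $\gs$ joining a vertex $u$ of one boundary $4$-cycle to a simplex $\gt$ of the other. I will run the same reduction, or more directly cone off both boundary $4$-cycles. By the Mayer--Vietoris argument used in Proposition~\ref{p:acyclic}, coning a full $4$-cycle does not change $b^{(2)}_*$, because the $4$-cycle group $W_C$ (the square of the infinite dihedral group) is acyclic. The result is a flag $2$-sphere with a simplex, and case (1) applies. Before coning I must check that both boundary cycles are full and that coning preserves flagness in the presence of $\gs$. This holds because $\gs$ meets each boundary cycle in a simplex, so no new empty simplex is created.

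\emph{Main obstacle.} The delicate step is the final local computation in case (1). If $\gt$ were disjoint from $C$, then $W_{C\cup\gt} = W_C * W_\gt$ would have positive $b^{(2)}_1$, and the argument would fail. So I must use flagness and fullness to show that $\gt$ meets $C$, so that $C \cup \gt$ is a $4$-cycle with a simplex glued along a face. In that case $W_{C\cup \gt}$ is an amalgam of the acyclic group $W_C$ with a finite group over a finite group, and the relative $b^{(2)}_1$ vanishes. I expect to establish this using Lemma~\ref{l:full cycle} together with the nested-disk argument from Lemma~\ref{l:s2+e}. This is also where I will be careful about exactly which configurations "with a simplex" allows.
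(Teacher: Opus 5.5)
Your reduction matches the paper: working over $\F_2$, contracting admissible edges outside $\gs$ by Theorem~\ref{t:inequality}, applying Lemma~\ref{l:s2+e}, and coning off the boundary cycles in case (2). The last step of case (1), however, has a genuine gap. You have misidentified which invariant of the link controls $b^{(2)}_1(W_L)$, and both claims you plan to prove there are false.

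The pair $(L,S^2)$ is not a valid substitute for Mayer--Vietoris. $S^2$ contains every vertex of $L$, so it is not full, and $W_{S^2}$ is not a special subgroup of $W_L$ (it surjects onto it). So $b^{(2)}_*(W_{S^2})=0$ does not feed into a long exact sequence for $W_L$. Also $S^2\cap\St_L u=(u*C)\cup\gt$, not $u*C$. This is what makes $b_1$ of $W_{C\cup\gt}$ relative to $W_C$ look relevant.

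Since $W_C$ is acyclic, that relative term is just $b^{(2)}_1(W_{C\cup\gt})$. It is positive whenever $\gt\not\subset C$, even when $\gt$ meets $C$. For $\gt=ab$ with $a\in C$, $b\notin C$, you get $W_C*_{\zz/2}(\zz/2)^2$ with $b^{(2)}_1=\tfrac12-\tfrac14=\tfrac14$. So your amalgam claim fails.

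Moreover, $\gt$ need not meet $C$ in the relatively minimal case. Take $4$-cycles $C=y_1y_2y_3y_4$ and $D=x_1x_2x_3x_4$. Let $S^2$ be $u*C\cup b*D$ together with the triangles $y_ix_ix_{i+1}$ and $y_iy_{i+1}x_{i+1}$ (indices mod $4$), and put $L=S^2\cup ub$. Then $L$ is flag, every edge of $S^2$ lies in a full $4$-cycle of $L$ ($C$, $D$, or some $uy_ix_jb$), and $\gt=b$ is disjoint from $C=\Lk_{S^2}u$.

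What you are missing is the degree shift in the Mayer--Vietoris sequence you mention first, for $L=(L-u)\cup_{\Lk_L u}\St_L u$. Here $L-u=S^2-u$ is a disk with $4$-cycle boundary, so $W_{L-u}$ is acyclic by Proposition~\ref{p:acyclic}(2). Because $\St_L u$ is a cone on $\Lk_L u$, the map $H_*(\Lk_L u)\to H_*(\St_L u)$ is onto. Hence $H_1(L)$ injects into $H_0(\Lk_L u)$, and $b^{(2)}_1(W_L)\le b^{(2)}_0(W_{\Lk_L u})=0$, since $W_{\Lk_L u}\supset W_C$ is infinite.

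No condition on how $\gt$ meets $C$ is needed. The nonzero $b^{(2)}_1(W_{\Lk_L u})$ only contributes to $b^{(2)}_2(W_L)$. This is exactly the paper's argument.
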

\begin{proof}
    By Lemma~\ref{l:fields}, it is enough to work over $\F_{2}$.
    To prove the first claim, suppose $L=S^{2} \cup u*\gt$.
    Since contraction of an admissible edge not in $u*\gt$ produces a new flag $2$-sphere with the same simplex, we can assume that $L$ is minimal relative to $u*\gt$ by Theorem \ref{t:inequality}.
    Then by Lemma~\ref{l:s2+e}, $L-u$ is a disk with $4$-cycle boundary, hence $b_{1}^{(2)}(L-u; \F_{2})=0$, and the claim follows from the Mayer--Vietoris sequence of the decomposition $L=(L-u) \mathop{\cup}\limits_{\Lk u } \St u$.

    If $L$ is a $4$-annulus with a simplex, we can cone off each of the boundaries without changing $b_{*}^{(2)}$.
    The result is a flag 2-sphere with a simplex, so we are done by the above.
\end{proof}
\begin{lemma}\label{l:ss}
    Suppose $L$ is a flag triangulation of a torus which contains three disjoint nontrivial $4$-cycles.
    Then $b_1^{(2)}(W_L; \F) = 0$.
\end{lemma}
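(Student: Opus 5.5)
We plan to use a Mayer--Vietoris / spectral sequence argument for a cover of $L$ by full subcomplexes whose associated Coxeter groups are $\F$-$L^2$-acyclic. Let $C_1, C_2, C_3$ be the three disjoint nontrivial $4$-cycles. On the torus, disjoint nontrivial simple closed curves are pairwise homotopic up to orientation. Hence they cut $L$ into three annuli $A_1, A_2, A_3$, where $A_i$ is bounded by $C_i$ and $C_{i+1}$ (indices mod $3$). Each $A_i$ is a $4$-annulus, so its interior meets only the two boundary cycles. By Proposition~\ref{p:acyclic}, a full $4$-annulus has $\F$-$L^2$-acyclic $W_{A_i}$; the cycles $C_i$ also give acyclic groups, since $W_{C_i}$ is a product of two infinite dihedral groups.

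The first issue is fullness, since the theory for subgroups $W_A \le W_L$ wants full subcomplexes. We will not use the annuli directly. Instead we use the full subcomplexes $L_{ij}$ spanned by the vertex sets of $C_i\cup C_j$ together with the interior of the annulus between them. If an annulus $A_i$ is not full, there is an extra edge between $C_i$ and $C_{i+1}$ running through another annulus. Such an edge must cross the third cycle, which is impossible because the cycles are disjoint and the triangulation is of a surface. So an edge joining $C_i$ to $C_{i+1}$ outside $A_i$ would have to lie in $A_{i+1}\cup A_{i+2}$, and it cannot pass $C_{i+2}$. Hence each $A_i$ is full. Similarly $C_i$ is full, because nontrivial $4$-cycles in flag complexes are full.

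Now write $L = A_1\cup A_2\cup A_3$, a cyclic union with pairwise intersections $A_i\cap A_{i+1}=C_{i+1}$ and empty triple intersection. We use the Davis complex decomposition: $\Sigma_L$ (or equivalently $P_L$) is covered by the $W_L$-translates of the subcomplexes $\cu(W_L, K_{A_i})$, which are disjoint unions of copies of $\Sigma_{A_i}$. Their intersections are unions of copies of $\Sigma_{C_i}$. This gives a Mayer--Vietoris spectral sequence
\[
E^1_{pq}=\bigoplus_{\abs{J}=p+1} b_q(\text{pieces indexed by }J)\Rightarrow H_{p+q}(P_L; D_{\F C_L}),
\]
where only $p=0,1$ occur. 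By induction, or by weak universality for the $\pi_1$-injective pieces, every entry has $D$-dimension equal to a multiple of $b^{(2)}_q(W_{A_i})$ or $b^{(2)}_q(W_{C_i})$. All of these vanish. Hence every $E^1$-term is zero, so $b^{(2)}_*(W_L;\F)=0$ in degrees $\le1$. In particular, $b^{(2)}_1(W_L;\F)=0$. (Degree $2$ cannot be killed, which is consistent: the Euler characteristic then forces $b_2=1$, via $\chi$ of $L$.)

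An alternative, if the cyclic cover is awkward, is to apply Mayer--Vietoris twice. First, $L' = A_1\cup_{C_2} A_2$ is a full $4$-annulus between $C_1$ and $C_3$, hence acyclic. Second, $L = L'\cup A_3$ with $L'\cap A_3 = C_1\sqcup C_3$, a disjoint union of two acyclic pieces. This gives $b_1(L)\le b_1(L')+b_1(A_3)+b_0(C_1\sqcup C_3)=0$.

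The main obstacle is justifying that $A_1\cup A_2$ and the $A_i$ are genuinely full $4$-annuli. In particular, extra edges across a cycle must be ruled out; the argument above using disjointness should handle this. One must also check that the cyclic Mayer--Vietoris is valid for $D_{\F C_L}$-coefficients, with intersections possibly disconnected. The latter follows from weak universality as in the proof of Theorem~\ref{t:branched}.
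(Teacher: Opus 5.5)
Your overall strategy is exactly the paper's: cover $L$ by the three $4$-annuli $A_1,A_2,A_3$ cut out by the cycles, check they are full, note that the pieces and their pairwise intersections $C_i$ have $\F$-$L^2$-acyclic Coxeter groups, and run the Mayer--Vietoris spectral sequence with $D_{\F C_L}$-coefficients. Your fullness argument (an extra edge from $C_i$ to $C_{i+1}$ would have to cross $C_{i+2}$) is fine.

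\textbf{The error.} The spectral sequence step, as you state it, is wrong. $A_1\cap A_2\cap A_3$ is empty in $L$, but the corresponding triple intersection in $P_L$ (or $\Sigma_L$) is not empty. The chambers $K_{A_i}\subset K_L$ are cones on barycentric subdivisions, and all of them contain the cone point of $K_L$, the vertex corresponding to the empty simplex. So the triple intersection in $P_L$ is the $(\Z/2)^S$-orbit of that point, which corresponds to the trivial group $W_\emptyset$. Hence the column $p=2$ does occur, and $E^1_{2,0}$ has normalized dimension $b^{(2)}_0(W_\emptyset)=1$.

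Your claim that every $E^1$-term vanishes would give $b^{(2)}_*(W_L;\F)=0$ in \emph{all} degrees. That contradicts $\chi(W_L)=1$; your parenthetical remark notices this tension but does not resolve it.

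\textbf{The repair.} This is the paper's observation: every entry of the $E^1$-page vanishes except $E^1_{2,0}$. That term sits in total degree $2$, so $H_1=0$, and it is exactly what accounts for $b^{(2)}_2=1$.

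\textbf{The alternative argument.} Your two-step Mayer--Vietoris argument should be dropped, for two reasons.
\begin{enumerate}
\item The union $L'=A_1\cup A_2$ need not be full. Suppose $A_3$ contains an edge joining $C_1$ to $C_3$; this happens whenever $A_3$ has no interior vertices. That edge joins two vertices of $L'$ without lying in $L'$. There is no third cycle to block it, so your disjointness argument does not apply.
\item $L'\cap A_3=C_1\sqcup C_3$ is not ``two acyclic pieces'' in the Coxeter setting. The empty simplex connects them again, the group is $W_{C_1}*W_{C_3}$, and $b^{(2)}_1(W_{C_1}*W_{C_3})=1$; only its $b^{(2)}_0$ vanishes.
\end{enumerate}
This is the same oversight as above. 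Stick with the three-piece cover, with the $E^1_{2,0}$ term accounted for.
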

\begin{proof}
    The three cycles divide $T^2$ into three full $4$-annuli.
    By Proposition~\ref{p:acyclic}, each of these is $\F$-$L^{2}$-acyclic, and the same holds for their pairwise intersections.
    The triple intersection is an empty simplex, which corresponds to the trivial group.
    Therefore, the Mayer--Vietoris spectral sequence of the cover of $L$ by the three annuli for homology with coefficients in $D_{\F C_{L}}$ has an $E^1_{ij}$-sheet with zero entries except for $E^{1}_{2,0}$, so the result follows.
\end{proof}
\begin{corollary}\label{c:annulussimplex}
    Suppose $L$ is a minimal flag triangulation of a torus and $A \subset L$ is a full subcomplex.
    We have $b^{(2)}_{1}(W_{L}; \F) = 0$ if $A$ is one of the following:
    \begin{enumerate}
        \item A $4$-annulus.
        \item A pinched $4$-annulus.
        \item A $4$-annulus with a simplex.
    \end{enumerate}
\end{corollary}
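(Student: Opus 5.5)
The plan is to combine the seed strategy (the Mayer--Vietoris argument passing from $A$ to $M(A)$) with the structural lemmas of the previous subsections. In every case the argument ends either with $M(A)=L$, or with three disjoint nontrivial $4$-cycles, to which Lemma~\ref{l:ss} applies.

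First I record the seed claim. If $A$ is full and $b^{(2)}_1(W_A;\F)=0$, then $b^{(2)}_1(W_{M(A)};\F)=0$. To see this, build $M(A)$ from $A$ by adding vertices $s$ one at a time, each with $\Lk s\cap M_{\text{prev}}$ a non-simplex. Since $M_{\text{prev}}$ is full, the new complex is $M_{\text{prev}}\cup_{\Lk s\cap M_{\text{prev}}}\Cone(\Lk s\cap M_{\text{prev}})$. The group $W$ of a cone is $\Z/2\times W_{\Lk}$, and this is $\F$-$L^2$-acyclic because the Künneth formula applies with a finite factor. A non-simplex link has infinite $W$, so its $b^{(2)}_0$ vanishes. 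Mayer--Vietoris then gives $b_1$ of the union $\le b_1(M_{\text{prev}})+b_0(\text{link})=b_1(M_{\text{prev}})$. I use the standard identification of $b^{(2)}$ for full subcomplexes via induced coefficients, which is weak universality applied to the injective maps $W_A\to W_L$. By Proposition~\ref{p:acyclic}(3) and Proposition~\ref{p:vanishing}(2), each of the three types of $A$ satisfies $b_1^{(2)}(W_A;\F)=0$, so $M(A)=L$ finishes the proof immediately.

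Now assume $M(A)\ne L$ and treat the cases in reverse order.
\begin{enumerate}
\item If $A$ is a $4$-annulus with a simplex, it contains an interior vertex (or two disjoint boundary $4$-cycles). I would use Lemma~\ref{l:acyclicsubcomplex}, since $M(A)$ contains the two disjoint boundary cycles. This yields either three disjoint nontrivial $4$-cycles, and we are done by Lemma~\ref{l:ss}, or a full pinched $4$-annulus, which reduces to case 2.
\item If $A$ is a full pinched $4$-annulus and $M(A)\ne L$, Lemma~\ref{l:pinchedannulus} gives a full $4$-annulus, which reduces to case 1.
\item If $A$ is a full $4$-annulus, enlarge it to a maximal full $4$-annulus $A'\supset A$; this is possible since $L$ is finite. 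If $M(A')\ne L$, Lemma~\ref{l:full annulus} gives three disjoint nontrivial $4$-cycles, and Lemma~\ref{l:ss} finishes. If $M(A')=L$, the seed claim applied to $A'$, which is acyclic by Proposition~\ref{p:acyclic}, finishes.
\end{enumerate}
There is no circularity, because the reductions only go from case 1 to case 2 to case 3.

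The main obstacle is making the seed claim rigorous: checking that attaching a vertex whose link meets the previous full subcomplex in a non-simplex does not raise $b_1^{(2)}$. This needs the Mayer--Vietoris sequence with $D_{\F C_L}$ coefficients, restricted to the subcomplexes corresponding to the special subgroups, together with the vanishing of $b_0^{(2)}$ for the infinite group $W_{\Lk s\cap M_{\text{prev}}}$. A secondary point is checking, in case 1, that the hypotheses of Lemma~\ref{l:acyclicsubcomplex} hold: the boundary $4$-cycles of a $4$-annulus with a simplex must be disjoint and nontrivial, which is true by the definition of a $4$-annulus.
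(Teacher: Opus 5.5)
Your case analysis is the paper's. The seed claim handles $M(A)=L$. A $4$-annulus with a simplex goes through Lemma~\ref{l:acyclicsubcomplex}, applied to the proper sated subcomplex $M(A)$ and its two boundary cycles. A pinched $4$-annulus goes through Lemma~\ref{l:pinchedannulus}. A $4$-annulus goes through a maximal full $4$-annulus $A'$, using either the seed claim for $A'$ or Lemma~\ref{l:full annulus} plus Lemma~\ref{l:ss}. Your case cross-references mix two numberings, but the chain ``with a simplex $\to$ pinched $\to$ annulus'' is the right one and it terminates.

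The genuine error is in your proof of the seed claim, which the paper only asserts. It is false that $W_{\Cone K}=\Z/2\times W_K$ is $\F$-$L^2$-acyclic. Since $b^{(2)}_0(\Z/2;\F)=1/2$, K\"unneth gives $b^{(2)}_*(W_{\Cone K};\F)=\tfrac12\, b^{(2)}_*(W_K;\F)$. For $K$ a vertex plus a disjoint edge, $W_K$ is infinite and virtually free, so $b^{(2)}_1(W_K;\F)=-\chi(W_K)=1/4$. This intersection pattern genuinely occurs; it is exactly the situation in the proof of Lemma~\ref{l:deg5}. So your Mayer--Vietoris estimate silently drops a nonzero term $b^{(2)}_1(W_{\Cone K})$.

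The inequality you want is still true, for a different reason. In $P$, each translate of the cone piece is $[-1,1]\times P_K$. The intersection piece meets it in the two end copies $\{\pm1\}\times P_K$, each included by a homotopy equivalence. Hence $H_1$ of the intersection piece surjects onto $H_1$ of the cone piece. The image of $H_1(\text{both pieces})$ in $H_1$ of the union is therefore already the image of the $M_{\mathrm{prev}}$ piece. This gives $b^{(2)}_1(W_{M_{\mathrm{new}}})\le b^{(2)}_1(W_{M_{\mathrm{prev}}})+b^{(2)}_0(W_K)=b^{(2)}_1(W_{M_{\mathrm{prev}}})$, using that $W_K$ is infinite when $K$ is not a simplex. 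With that replacement your proof is complete.
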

\begin{proof}
    In every case, by Propositions~\ref{p:acyclic} and \ref{p:vanishing}, we have $b^{(2)}_{1}(W_A; \F) = 0$.
    This implies $b^{(2)}_{1}(W_{M(A)} \F) = 0$ and the claims follow if $M(A)=L$.
    So we will assume that $M(A) \neq L$.

    1. If $A$ is a full $4$-annulus, then $L$ contains a maximal full $4$-annulus $A'$.
    If $M(A')=L$, we are done, otherwise by Lemma \ref{l:full annulus} $L$ contains three disjoint nontrivial $4$-cycles and the claim follows from Lemma~\ref{l:ss}.

    2. If $A$ is a full pinched $4$-annulus, we can apply Lemma \ref{l:pinchedannulus} to conclude that $L$ contains a full $4$-annulus and then the claim follows from 1.

    3. If $A$ is a full $4$-annulus with a simplex, then by Lemma \ref{l:acyclicsubcomplex} $L$ contains either three disjoint nontrivial $4$-cycles or a full pinched $4$-annulus $A'$.
    In the former case, we are done by Lemma~\ref{l:ss}, and in the latter case by 2.
\end{proof}
\begin{corollary}\label{c:deg4}
    If $L$ is a minimal flag triangulation of a torus which contains a vertex of degree $4$, then $b^{(2)}_{1}(W_{L}; \F)=0$.
\end{corollary}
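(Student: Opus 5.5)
Let $v$ be a vertex of degree $4$ with $\Lk v = abcd$, and let $avcx$ and $bvdy$ be the two diagonal full $4$-cycles through $v$. Since a torus is not an octahedron, Lemma~\ref{l:octahedron} shows that both diagonal cycles are nontrivial. The strategy is to produce one of the seeds of Corollary~\ref{c:annulussimplex} inside $L$: a full $4$-annulus, a full pinched $4$-annulus, or a full $4$-annulus with a simplex. That corollary then gives $b^{(2)}_1(W_L;\F)=0$.

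The first step uses Lemma~\ref{l:crossing cycles} with $M = M(\St v)$, which contains $v$ as an interior vertex. If $M = L$, we instead argue directly. The star $\St v$ is a full disk (the octahedral cone on a full $4$-cycle), so $b^{(2)}_1(W_{\St v};\F)=0$ by the Künneth formula or Proposition~\ref{p:acyclic}. The Mayer--Vietoris argument for sated closures then gives $b^{(2)}_1(W_{M(\St v)};\F)=0$, and we are done.

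So assume $M \neq L$. By the first part of Lemma~\ref{l:crossing cycles}, $L$ contains two disjoint nontrivial $4$-cycles, and these can be taken to be one of the diagonal cycles through $v$ together with a cycle $qrst$ supplied by Lemma~\ref{l:M<L}. To upgrade this to a seed, we apply the second part of Lemma~\ref{l:crossing cycles} with $C_1 = avcx$ and $C_2 = bvdy$. We must verify that for $u_1 \in \{a,c,x\}$ the intersection $\Lk u_1 \cap C_2$ is connected, and symmetrically for $u_2 \in \{b,d,y\}$.

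For $u_1 = a$, the set $\Lk a \cap C_2$ contains $v$, $b$ and $d$. Here $a$ is adjacent to $b$ and $d$ (they are consecutive in $\Lk v$), so these points form the arc $bvd$. It remains to check that $a$ is not adjacent to $y$; otherwise $\Lk a \cap C_2$ would be all of $C_2$, and then $C_2 \subset \Lk a$, which is impossible since $\Lk a$ is a cycle strictly containing $\St$-neighbors. If $a$ were adjacent to $y$, then $aby$ and $ady$ would be triangles, and flagness together with the fact that $\Lk a$ is a circle would force $\Lk a = bvdy$ to be a $4$-cycle. In that case $a$ would be another degree-$4$ vertex whose link is the diagonal cycle $C_2$, contradicting the nontriviality of $C_2$. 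The case $u_1 = c$ is symmetric. For $u_1 = x$, the vertex $x \notin \St v$ is not adjacent to $v$, and we must show that $\Lk x \cap \{b,d,y\}$ is connected. Its only possible disconnected configuration is $\{b,d\}$ without $y$. In that configuration $x$ is adjacent to $a$, $b$ and $c$, so $xabv$-type cycles appear, and the $4$-cycle $xbvd$ is nontrivial and homologous to a diagonal cycle. This lets us replace $C_2$ by $bxd\cdot v$ or similar and rerun the argument.

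The main obstacle is exactly this bookkeeping: ruling out, or rerouting around, the configurations where $x$ (respectively $y$) is adjacent to both opposite vertices $b,d$ (respectively $a,c$). If rerouting fails, the fallback is to use the cycles $xbvd$ and $avcx$ directly. These share the two vertices $x$ and $v$, and together with $\Lk v$ they form a full edge-pinched or pinched $4$-annulus. Proposition~\ref{p:acyclic} then gives an acyclic seed, and we feed $M$ of that seed through Lemma~\ref{l:pinchedannulus} and Corollary~\ref{c:annulussimplex}. Once the connectivity hypothesis holds, Lemma~\ref{l:crossing cycles} yields a full $4$-annulus with a simplex, and Corollary~\ref{c:annulussimplex}(3) finishes the proof.
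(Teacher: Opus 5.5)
Your overall route matches the paper's: use the seed $\St v$ (equivalently $M(\Lk v)$, which contains $\St v$), apply the second part of Lemma~\ref{l:crossing cycles} to the two diagonal cycles, and finish with Corollary~\ref{c:annulussimplex}(3). The gap is the step you call the main obstacle. For $u_1=x$ you never show that $\Lk x\cap C_2$ is connected. Replacing $C_2$ by ``$bxd\cdot v$ or similar and rerun the argument'' is not an argument. The fallback also fails: the cycles $xbvd$ and $avcx$ meet in the two non-adjacent vertices $x$ and $v$, so they do not bound a pinched or edge-pinched $4$-annulus. In any case, an edge-pinched annulus is not among the seeds treated by Lemma~\ref{l:pinchedannulus} or Corollary~\ref{c:annulussimplex}. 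As written, the proof does not cover the configuration you single out.

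The missing idea is that this configuration cannot occur, by the same argument you already used to exclude $a\sim y$. (That exclusion was not even needed for connectivity, since $C_2$ itself is connected; it is needed for $x$ and $y$.) Suppose $x\sim b$. Then $x$ is adjacent to $a$, $b$ and $c$. By flagness, $bax$ and $bcx$ are triangles, and $bav$ and $bcv$ are triangles too. So $\Lk b$ contains the cycle $avcx$. Since $\Lk b$ is a circle, $\Lk b=avcx=C_1$, which contradicts the nontriviality of $C_1$ given by Lemma~\ref{l:octahedron}. The same argument rules out $x\sim d$, $y\sim a$ and $y\sim c$. Hence $C_1\cup C_2\cup\St v$ is full except possibly for the single edge $xy$; this is exactly the paper's one-line reduction. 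It gives $\Lk x\cap C_2\subseteq\{y\}$, $\Lk y\cap C_1\subseteq\{x\}$, $\Lk a\cap C_2=bvd$, and so on. So the connectivity hypotheses of Lemma~\ref{l:crossing cycles} hold with no rerouting, and the rest of your argument then goes through as in the paper.
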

\begin{proof}
    Suppose $v$ is a vertex of degree $4$, and let $C_{1}$ and $C_{2}$ be the two diagonal full $4$-cycles crossing at $v$.
    Then the union of the cycles with the link of $v$ is either full or has one extra edge between the vertices opposite to $v$ in the cycles.
    In either case the connectedness assumptions of Lemma~\ref{l:crossing cycles} are satisfied, and since $b^{(2)}_{1}(W_{\Lk v}; \F)=0$, we are done by Corollary~\ref{c:annulussimplex}.
\end{proof}
\begin{lemma}\label{l:deg5}
    If $L$ is a minimal flag triangulation of a torus which contains a vertex of degree $5$, then $b^{(2)}_{1}(W_{L}; \F)=0$.
\end{lemma}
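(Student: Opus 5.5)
Let $v$ be a vertex of degree $5$ with $\Lk v = abcde$ (cyclically). The plan is to mimic Corollary~\ref{c:deg4}: find two nontrivial $4$-cycles crossing at $v$ that satisfy the connectedness hypotheses of Lemma~\ref{l:crossing cycles}. Then one of two things happens. Either $M(\St v \cup C_1 \cup C_2) = L$, in which case a Mayer--Vietoris argument gives vanishing. Or we obtain a full $4$-annulus with a simplex, and Corollary~\ref{c:annulussimplex} finishes the proof.

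The first step is to produce the crossing cycles. By Corollary~\ref{c:edge}, each edge $va$ lies in a nontrivial $4$-cycle. Since a full $4$-cycle through $v$ enters and exits through non-adjacent vertices of $\Lk v$, such a cycle has the form $v\,a\,x\,c$ or $v\,a\,x\,d$ with $x \notin \St v$. Up to symmetry, the pentagon forces two of these cycles to cross at $v$, say $C_1 = v a x c$ and $C_2 = v b y d$ or $v b y e$. Here I would instead take the pair produced by Lemma~\ref{l:vertex} and then read off its shape. In every case, $\St v \cup C_1 \cup C_2$ is a $2$-sphere-like configuration: $\Lk v$ together with two paths of length $2$ attached across it. If $x = y$, this configuration is a disk; otherwise it is a disk-with-handle piece of the torus.

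The second step is the seed. Since $\Lk v$ is a $5$-cycle, $W_{\Lk v}$ is a hyperbolic surface group reflection group, so $b^{(2)}_1(W_{\Lk v};\F) \neq 0$. For this reason I cannot simply use $\St v$ as in the degree-$4$ case. Instead I would check the hypotheses of Lemma~\ref{l:crossing cycles} directly. These require that, for $u_1 \in C_1 - v$, the set $\Lk u_1 \cap C_2$ be connected, and symmetrically. The possible adjacencies are among $a, c, x$ and $b, d, y$ (or $b, e, y$). Adjacencies inside the pentagon are fixed. The only new edges can be $xy$, or edges from $x$ to $b$ or $d$, and so on. An edge from $x$ to $b$ would create a full $4$-cycle $a x b v$? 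No: $a$ and $b$ are adjacent, so instead one obtains the triangle $abx$, and fullness together with the nontriviality of $C_1$ should rule out the disconnected patterns. I expect each bad pattern to yield either an empty triangle or a trivial full $4$-cycle, which by Lemma~\ref{l:4-cycle} is the link of a degree-$4$ vertex. In that case we can defer to Corollary~\ref{c:deg4}.

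The third step handles the sated-closure alternative. Lemma~\ref{l:crossing cycles} is stated for a proper sated $M$ containing $\St v$, and it yields a full $4$-annulus with a simplex, after which we conclude by Corollary~\ref{c:annulussimplex}. So it suffices to take $M = M(\St v)$ if this is proper. If instead $M(\St v) = L$, I need $b_1^{(2)}(W_L)=0$ by another route. Here I would add vertices one at a time: starting from $\St v$ (a cone, hence acyclic), each added vertex $s$ has $\Lk s \cap M$ a non-simplex. The Mayer--Vietoris sequence shows that $b_1$ stays $0$ provided $b_0^{(2)}$ of these intersections vanishes, which holds when the intersection is not a simplex. This is the main obstacle: verifying the connectedness hypothesis case by case for the pentagon, and in particular excluding a configuration where $x$ is adjacent to two non-consecutive vertices of $C_2$. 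My first attempt there would be to show that this forces a full $4$-cycle through $v$ of a different shape, and then to swap $C_2$ for it.
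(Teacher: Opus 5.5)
\textbf{There is a genuine gap in your third step.} It sits exactly where degree $5$ differs from degree $4$.

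A cone is not acyclic here. Since $W_{\St v}=\Z/2\times W_{\Lk v}$, K\"unneth gives $b^{(2)}_1(W_{\St v};\F)=\tfrac12\, b^{(2)}_1(W_{\Lk v};\F)=\tfrac18\neq 0$ for a pentagon link. You noted the nonvanishing for $\Lk v$ yourself in step two.

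Your other proposed seed fares no better. If $\St v\cup C_1\cup C_2$ is full, the Euler characteristic of its Coxeter group is $1-\tfrac82+\tfrac{14}{4}-\tfrac58=-\tfrac18$, so $b^{(2)}_1\ge \tfrac18$.

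The Mayer--Vietoris argument you describe only shows that vanishing of $b^{(2)}_1$ is \emph{preserved} as vertices with non-simplex traces are added. From a seed with $b^{(2)}_1\neq 0$ it yields nothing. So the branch $M(\St v)=L$ is not handled at all: Lemma~\ref{l:crossing cycles} plus Corollary~\ref{c:annulussimplex} only covers $M(\St v)\neq L$. In Corollary~\ref{c:deg4} this problem never arises, because there $W_{\Lk v}=D_\infty\times D_\infty$ is acyclic.

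\textbf{What is missing.} You need either a seed with vanishing $b^{(2)}_1$ whose sated closure still contains $\St v$, or a way to bypass sated closures. The paper does both, by a case split; label so that the cycles are $vaxc$ and $vbyd$.

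First assume there are no degree-$4$ vertices. Then two non-adjacent vertices cannot share a path of length $2$ in their links, since its middle vertex would have degree $4$. This settles your connectedness checks at once; for instance, $x\sim b$ would put $abc$ into $\Lk x\cap\Lk v$.

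Case one: some $u\notin \St v$ has $\Lk u\cap \Lk v$ equal to a vertex plus a disjoint edge. Then $A=\St v\cup u$ is an edge-pinched $4$-annulus, so $b^{(2)}_1(W_A)=0$ by Proposition~\ref{p:acyclic}. Moreover $M(A)=M(\Lk v)\supseteq \St v$, so your dichotomy now works with $A$ as the seed.

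Case two: no such $u$ exists. Then one shows that $C_1\cup C_2\cup \St v$ is full, and takes $z\in \Lk(ae)-v$, so that $\Lk z\cap \Lk v=ae$. A nontrivial $4$-cycle through $ez$ has the form $ezyd$ or $ezxz'$. Together with $C_1$ or $C_2$ it bounds a full $4$-annulus, possibly with a simplex, and Corollary~\ref{c:annulussimplex} applies directly, with no sated closure needed.
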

\begin{proof}
    Suppose $v$ is a vertex of degree $5$ with link $abdce$, and the two nontrivial $4$-cycles given by Lemma~\ref{l:vertex} are $C_{1}=vbyd$ and $C_{2}=vaxc$.
    Cutting the torus along the cycles gives the left picture in Figure~\ref{f:degree5.1}.
    \begin{figure}[htb]
        \begin{center}
            \begin{tikzpicture}[scale=0.7]
                \begin{scope}[xshift=-4cm] \pic[scale=0.7]{deg5};
                \end{scope}
                \begin{scope}[xshift=4cm] \pic[scale=0.7]{deg5};
                    \node[name=z, vertex, label=left:$z$] at (1,1) {};
                    \draw (a2)--(z)--(e);
                    \draw[dashed] (x2)--(z)--(y1);
                \end{scope}
            \end{tikzpicture}
        \end{center}
        \caption{A degree $5$ vertex and two nontrivial $4$-cycles through it.
        We can assume that the only possible additional edges between the shown vertices are the dashed edges.
        }\label{f:degree5.1}
    \end{figure}

    We now show that we can reduce to the case where $C_{1}\cup C_{2} \cup \St v$ is full, in other words where the picture does not have additional edges between the shown vertices.
    Indeed, we can assume that no vertex has degree $4$, as otherwise we would be done by Corollary~\ref{c:deg4}.
    Then the intersection of links of two nonadjacent vertices does not contain adjacent edges, as that would give a degree $4$ vertex.
    In particular, if one of the vertices is $v$, then the intersection is a subcomplex of a complex consisting of a vertex and a disjoint edge.
    Therefore, $x$ is not adjacent to $b$, and $\Lk x \cap C_{1} \subset dy$ and hence connected.
    The intersections of the links of $a$ and $c$ with $C_{1}$ contain an edge, and therefore are also connected.
    Similarly, the intersections of the links of $b$, $y$, and $d$ with $C_{2}$ are also connected.

    If there exists a vertex $u \notin \St v$ such that $\Lk u \cap \Lk v$ is a vertex and a disjoint edge, then $A := \St v \cup u$ is an edge-pinched annulus and by Proposition~\ref{p:vanishing} we have $b^{(2)}_{1}(W_{A}; \F) = 0$.
    Since $A \subset M(\Lk v)$, $b^{(2)}_{1}(W_{M(\Lk v)}) = 0$, and by Lemma~\ref{l:crossing cycles} and Corollary~\ref{c:annulussimplex}, we are done.

    So we can assume no such vertex exists, and hence $\Lk x \cap \Lk v = \{a, c\}$ and $\Lk y \cap \Lk v = \{b, d\}$.
    An edge $xy$ would then either make a trivial $4$ or $5$-cycle without diagonals, which by Lemma~\ref{l:full cycle} must be filled with a cone.
    This creates either a new degree $4$ vertex, or makes $e$ degree $4$.
    Thus $C_{1} \cup C_{2} \cup \St v$ is full.

    Take $z \in \Lk (ae) - v$, so $z \notin C_{1} \cup C_{2} \cup \St v$.
    Then by our assumption $\Lk z \cap \Lk v = ae$, see Figure~\ref{f:degree5.1}.
    Note that $z$ cannot be connected to the left $x$ as that creates an empty triangle $axz$, or to the top $y$ as that creates a trivial $4$-cycle $dezy$ and hence a degree $4$ vertex.

    Therefore, a nontrivial $4$-cycle through $ez$ does not pass through $v$ and has one of the forms $ezxz'$ or $ezyd$ and does not intersect one of the outside $4$-cycles, so we get two annuli.
    Consider the annulus containing the triangle $aez$, see Figure~\ref{f:degree5.3}.
    \begin{figure}[htb]
        \begin{center}
            \begin{tikzpicture}[scale=0.7]
                \begin{scope}[xshift=-4cm] \pic[scale=0.7]{deg5};
                    \coordinate (z) at (1,1);
                    \node[vertex, label=left:$z$] at (z) {};
                    \draw (a2)--(z)--(e);
                    \draw[dashed] (x2)--(z);
                    \draw[fill=gray, fill opacity=.2] (v1)--(d2)--(e)--(z)--(y1)--(v4)--cycle;
                \end{scope}
                \begin{scope}[xshift=4cm] \pic[scale=0.7]{deg5};
                    \coordinate (z) at (1,1);
                    \node[vertex, label=left:$z$] at (z) {};
                    \coordinate (z') at (-1,1);
                    \node[vertex, label=left:$z'$] at (z') {};
                    \draw (a2)--(z)--(e);
                    \draw (z)--(x2);
                    \draw[dashed] (b1)--(z')--(y1);
                    \draw[fill=gray, fill opacity =.2] (v2)--(x1)--(z')--(e)--(z)--(x2)--(v1);
                \end{scope}
            \end{tikzpicture}
        \end{center}
        \caption{The annulus on the left is full, the annulus on the right is full possibly with a simplex.}\label{f:degree5.3}
    \end{figure}

    For the cycle $ezyd$, this annulus is full, since the only possible extra edge $zx$ is inside the annulus, since $za$ is.
    For $ezxz'$, we can assume that $z$ is not adjacent to $y$ (otherwise use $ezyd$).
    Then the only outside edges between its boundary components are from $z'$.
    The edge $z'd$ is inside the annulus because of $ed$, so the possible edges are $z'b$ and $z'y$ and we get a full annulus with a simplex.
    In both cases we are done by Corollary~\ref{c:annulussimplex}.
\end{proof}
\begin{theorem}\label{t:torus}
    Suppose $L$ is a flag triangulation of a torus.
    Then $b^{(2)}_{\neq 2}(W_L; \F) = 0$ and $b^{(2)}_{2}(W_L; \F) = 1$.
\end{theorem}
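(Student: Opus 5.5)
Here is the plan. First we use Euler characteristic and duality to reduce everything to the vanishing of $b^{(2)}_1$. Then we reduce to minimal triangulations via Theorem~\ref{t:inequality}. Finally we do a degree analysis on minimal tori.

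\emph{Reduction to $b_1$.} Since $L$ is a flag triangulation of $T^2$, $P_L$ is a closed aspherical $3$-manifold. Hence $b^{(2)}_0(W_L;\F)=0$, because $W_L$ is infinite. Poincar\'e duality over the skew field $D_{\F C_L}$ gives $b^{(2)}_3=b^{(2)}_0=0$ and $b^{(2)}_2=b^{(2)}_1$. (Over $\F_2$ orientability is irrelevant; over other fields pass to an orientable cover and use multiplicativity.) This is the wrong normalization for the final answer, however, so we instead use the orbifold Euler characteristic. For right-angled Coxeter groups,
\[
\chi(W_L)=\sum_{\sigma\in L}(-1/2)^{\dim\sigma+1}+1 = 1-\frac{V}{2}+\frac{E}{4}-\frac{F}{8}.
\]
On the torus $E=3V$ and $F=2V$, so $\chi(W_L)=1-V/2+3V/4-V/4=1$. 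The Betti numbers are for the $4$-dimensional $\Sigma_L$? No: $\Sigma_L$ is $3$-dimensional, and $\sum(-1)^i b^{(2)}_i=\chi$. Thus $b^{(2)}_2-b^{(2)}_1-b^{(2)}_3=-\chi$ up to sign conventions, and it suffices to prove $b^{(2)}_1(W_L;\F)=0$. Then $b^{(2)}_3=0$ follows since $P_L$ is a $3$-manifold with infinite $\pi_1$ (or from $b^{(2)}_0=0$ by duality), and $b^{(2)}_2=1$ is forced by the Euler characteristic. This should be checked against the expected value $g=1$.

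\emph{Reduction to minimal $L$.} By Lemma~\ref{l:fields} it is enough to prove $b^{(2)}_1(W_L;\F_2)=0$. Edge links in a surface are pairs of points, and $W_{\Lk e}$ is $\F_2$-$L^2$-acyclic. So Theorem~\ref{t:inequality} gives $b^{(2)}_1(W_L;\F_2)\le b^{(2)}_1(W_{L/e};\F_2)$ for each admissible edge. Contracting an admissible edge of a flag torus gives a flag torus with fewer vertices, so iterating terminates at a minimal flag triangulation of $T^2$. (It is not the octahedron, by Corollary~\ref{c:sphere}.)

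\emph{Minimal tori.} Now let $L$ be minimal. The average degree is $6$. If some vertex has degree $4$ we conclude by Corollary~\ref{c:deg4}, and if some vertex has degree $5$ by Lemma~\ref{l:deg5}. Otherwise every degree is $\ge 6$, which together with average $6$ forces $L$ to be $6$-regular. Lemma~\ref{l:deg6} then gives three disjoint nontrivial $4$-cycles, and Lemma~\ref{l:ss} gives $b^{(2)}_1=0$.

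The combinatorial work is already done in the preceding lemmas. The main obstacle I expect is the bookkeeping in the first step: getting the sign and normalization of the Euler characteristic identity right, so that $b^{(2)}_2=\chi(W_L)=1$ comes out cleanly, and justifying $b^{(2)}_0=b^{(2)}_3=0$ over all fields $\F$.
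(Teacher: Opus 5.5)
Your outline matches the paper's proof. You use the Euler characteristic to reduce to $b^{(2)}_1$, pass to $\F_2$ by Lemma~\ref{l:fields}, and contract admissible edges via Theorem~\ref{t:inequality}. You then finish minimal tori with Corollary~\ref{c:deg4}, Lemma~\ref{l:deg5}, and Lemmas~\ref{l:deg6} and~\ref{l:ss}. Your direct computation $\chi(W_L)=1-V/2+E/4-F/8=1$ (instead of citing Akita) is correct. So is the average-degree argument forcing $6$-regularity, and the reduction and case analysis are fine.

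The gap is in your first step: $P_L$ is \emph{not} a closed $3$-manifold here. Every vertex of $P_L$ has link $L\cong T^2$, not $S^2$, so $P_L$ is only a $3$-dimensional pseudomanifold with isolated singular points, and Poincar\'e duality does not apply. That is why you obtained $b^{(2)}_2=b^{(2)}_1$, which contradicts the statement you are proving. This is not a normalization issue; it is a symptom of the false premise.

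As a result, your justification of $b^{(2)}_3=0$ (``$P_L$ is a $3$-manifold with infinite $\pi_1$'', or ``duality with $b^{(2)}_0$'') is invalid. Without it, the Euler characteristic only gives $b^{(2)}_2=1+b^{(2)}_1+b^{(2)}_3$.

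The repair is the pseudomanifold argument the paper uses:
\begin{itemize}
\item Edge links in $L$ are pairs of points, so each square of $P_L$ lies in exactly two $3$-cubes.
\item Since $L$ is a connected surface, every connected finite cover $X'$ of $P_L$ is a strongly connected $3$-pseudomanifold, hence $\dim_{\F}H_3(X';\F)\le 1$.
\item Because $C_L$ is infinite and residually finite, the infimum formula over covers of unbounded degree gives $b^{(2)}_3(W_L;\F)=0$.
\end{itemize}

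Also correct the sign. With $b^{(2)}_0=0$ the identity is $b^{(2)}_2-b^{(2)}_1-b^{(2)}_3=\chi(W_L)=1$, not $-\chi$.
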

\begin{proof}
    Since $P_{L}$ is a $3$-dimensional pseudo-manifold, $b^{(2)}_{> 2}(W_L; \F)=0$.
    Akita showed in \cite{a00} that if $L$ is a flag triangulation of a closed genus $g$ surface, then the orbihedral Euler characteristic of $W_L$ is equal to $g$.
    So by Lemma~\ref{l:fields} it is enough to show $b^{(2)}_{1}(W_L; \F_{2})=0$.
    By Theorem \ref{t:inequality} we can assume that $L$ is minimal.
    If $L$ has a vertex of degree $4$ or $5$, the claim is immediate from Corollary \ref{c:deg4} and Lemma~\ref{l:deg5}.
    Otherwise every vertex has degree 6, and the claim follows from Lemmas \ref{l:deg6} and \ref{l:ss}.
\end{proof}

In \cite{fmn94}, Fisk, Mohar, and Nedela showed that the real projective plane has only five minimal flag triangulations.
This makes computing the $L^2$-Betti number of $W_{\rr \mathrm{P}^2}$ much easier than the torus, because all five flag minimal triangulations have a $4$-cycle $C$ with $M(C) = L$, see Figure~\ref{f:RP2}.
Therefore,
\begin{theorem}\label{t:RP2}
    If $L$ is a flag triangulation of $\rr\mathrm{P}^{2}$, then $b^{(2)}_{2}(W_L ; \F) = 1/2$ and $b^{(2)}_{\neq 2}(W_L ; \F) = 0$.
\end{theorem}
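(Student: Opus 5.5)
The plan mirrors the proof of Theorem~\ref{t:torus}. First I determine the Euler characteristic and the top degree. Since $P_L$ is a $3$-dimensional pseudo-manifold, $b^{(2)}_{>2}(W_L;\F)=0$. By Akita's computation, the orbihedral Euler characteristic of $W_L$ for a flag triangulation of a closed surface $\Sigma$ is $\chi(\Sigma)/2$ up to sign, so for $\rr\mathrm{P}^2$ it equals $1/2$. Concretely, $\sum_{\sigma}(-1/2)^{\dim\sigma+1}$ over the simplices of $L$ (including the empty one) is $1-V/2+E/4-F/8$. With $2E=3F$ and $V-E+F=1$ this gives $1/2$.

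Since $W_L$ is infinite, $b^{(2)}_0=0$. Hence $b^{(2)}_2-b^{(2)}_1=1/2$, and it suffices to show $b^{(2)}_1(W_L;\F)=0$. By Lemma~\ref{l:fields} it is enough to do this over $\F_2$.

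Next I reduce to minimal triangulations. In a surface, the link of every edge is two points, so $W_{\Lk e}=\Z/2\times\Z/2$ is finite and $\F_2$-$L^2$-acyclic. Theorem~\ref{t:inequality} then shows that contracting admissible edges does not decrease $b^{(2)}_1(\,\cdot\,;\F_2)$. Contractions preserve the surface and flagness, and they terminate. So it suffices to prove $b^{(2)}_1(W_L;\F_2)=0$ for the five minimal flag triangulations of $\rr\mathrm{P}^2$ listed by Fisk--Mohar--Nedela~\cite{fmn94}.

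For each of these five, I exhibit a $4$-cycle $C$ with $M(C)=L$. That is, I give an ordering of the remaining vertices such that each new vertex's link meets the previously built full subcomplex in a non-simplex. A full $4$-cycle $C$ gives $W_C=D_\infty\times D_\infty$, which is $\F$-$L^2$-acyclic by the K\"unneth formula. Adding a vertex $s$ whose link meets the current full subcomplex $N$ in $N\cap \Lk s$ uses the Mayer--Vietoris decomposition $N\cup s=N\cup_{N\cap\Lk s}\Cone(N\cap\Lk s)$. Here $\Cone$ gives a group $W\times\Z/2$, which has vanishing $b_0^{(2)}$ unless $N\cap\Lk s$ is a simplex. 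The Mayer--Vietoris sequence then preserves the vanishing of $b^{(2)}_1$. This is the "easy Mayer--Vietoris argument" already used for $M(A)$. Iterating gives $b^{(2)}_1(W_{M(C)})=0$, hence $b^{(2)}_1(W_L;\F_2)=0$.

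The main obstacle is this finite combinatorial verification on the five explicit triangulations, checking that the sating process starting from a suitable $4$-cycle exhausts $L$. I would pick $C$ to be a nontrivial (one-sided) $4$-cycle. These exist by Corollary~\ref{c:edge}, since none of the five is an octahedron. I would then check vertex by vertex, using the symmetry of each triangulation to cut down the cases; this is presumably what the figure displays.
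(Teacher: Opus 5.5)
Your proposal follows the paper's argument. The pseudo-manifold bound, Akita's Euler characteristic and Lemma~\ref{l:fields} reduce everything to $b^{(2)}_1(W_L;\F_2)=0$. Theorem~\ref{t:inequality} then reduces to the five Fisk--Mohar--Nedela minimal triangulations. For each of these the paper exhibits a $4$-cycle $C$ with $M(C)=L$ in Figure~\ref{f:RP2}, which is exactly the finite check you defer.

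Two slips need fixing.

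First, the link of an edge in a flag surface is two \emph{non-adjacent} vertices. So $W_{\Lk e}\cong \Z/2*\Z/2=D_\infty$, which is infinite and virtually $\Z$, hence $\F$-$L^2$-acyclic. $\Z/2\times\Z/2$ is the group of an edge, and as a finite group it has $b^{(2)}_0=1/4\neq 0$, so ``finite and acyclic'' cannot be the reason.

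Second, Akita's value for a closed surface $\Sigma$ is $1-\chi(\Sigma)/2$ (e.g.\ $1$ for the torus), not $\chi(\Sigma)/2$. The two agree here only because $\chi(\rr\mathrm{P}^2)=1$. Your explicit count $1-V/2+E/4-F/8=1/2$ is correct.

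A smaller point on the Mayer--Vietoris step: $b^{(2)}_1$ of the cone piece need not vanish. For example, three isolated vertices give $\Z/2*\Z/2*\Z/2$ with $b^{(2)}_1=1/2$. What makes the step work is $b^{(2)}_0(W_{N\cap\Lk s})=0$ together with one further fact. The cone piece is $P_{N\cap\Lk s}\times[0,2]$, attached along its two ends, so $H_1$ of the intersection term surjects onto $H_1$ of the cone term.
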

\begin{figure}[htb]
    \begin{center}
        \begin{tikzpicture}
            \tikzmath{ \R=2.5; \s=1.7; \r=\R/\s; }
            \begin{scope}[xshift=1.75 cm]
                \node[regular polygon, regular polygon sides=10, minimum size=\R cm, draw, name=a, rotate=18] {};
                \node[star, star points=5, star point ratio=\s, minimum size=\R cm, draw, name=b] {};
                \node[regular polygon, regular polygon sides=5, minimum size=\r cm, rotate=36, name=c, draw] {};
                \foreach \i in {2,4,...,10}
                	   \draw (a.corner \i ) -- (a.center);
                \draw[very thick] (a.corner 4) -- (a.center) -- (a.corner 10) ;
            \end{scope}
            \begin{scope}[xshift=4.75 cm]
                \node[regular polygon, regular polygon sides=10, minimum size=\R cm, draw, name=a, rotate=18] {};
                \node[star, star points=5, star point ratio=\s, minimum size=\R cm, draw, name=b] {};
                \node[regular polygon, regular polygon sides=5, minimum size=\r cm, rotate=36, name=c, draw] {};
                \foreach \i in {2,4,8,10}
                     \draw (a.corner \i ) -- (a.center);
                \draw (a.corner 5) -- (a.corner 7);
                \draw (c.center) --(c.corner 3) ;
                \draw[very thick] (a.corner 4) -- (a.center) -- (a.corner 10) ;
            \end{scope}
            \begin{scope}[xshift=0 cm, yshift=-3 cm]
                \node[regular polygon, regular polygon sides=12, minimum size=\R cm, draw, name=a, rotate=15] {};
                \node[star, star points=3, star point ratio=\s, minimum size=\R cm, draw, name=b] {};
                \node[star, star points=3, star point ratio=\s, minimum size=\r cm, rotate=60, name=c, draw] {};
                \foreach \i in {1,3,...,12}
                     \draw (a.corner \i ) -- (a.center);
                \foreach \i in {1,2,3} \foreach \j in {1,2,3} { 
                    \pgfmathtruncatemacro{\k}{mod(\i * 4-\j, 12)+1}
               	    \draw (c.outer point \i ) -- (a.corner \k ); 
	        } 	    
                \draw[very thick] (a.corner 5) -- (a.corner 11) ;
            \end{scope}

            \tikzmath{ \sa=\s*2.2; \r=\R/\s; \ra=\R/\sa; }
            \begin{scope}[xshift=3 cm, yshift=-3 cm]
                \node[regular polygon, regular polygon sides=12, minimum size=\R cm, draw, name=a, rotate=15] {};
                \node[star, star points=3, star point ratio=\s, minimum size=\R cm, draw, name=b] {};
                \node[regular polygon, regular polygon sides=3, minimum size=\ra cm, rotate=60, name=c, draw] {};
                \node[star, star points=3, star point ratio=\sa, minimum size=\R cm, draw, name=d] {};
                \foreach \i in {1,2,3} {
                    \foreach \j in {1,2,3} { 
                    	\pgfmathtruncatemacro{\k}{mod(\i * 4-\j, 12)+1}
                     	\draw (b.inner point \i ) -- (a.corner \k ); 
	             }
                     \draw (b.inner point \i ) -- (c.corner \i ); 
                }
                \draw[very thick] (a.corner 5) -- (c.corner 1) -- (c.corner 3) -- (a.corner 11) ;
            \end{scope}
            \tikzmath{ \sa=\s*3; \ra=\R/\sa; \ys= 0.66; \y= (1-\ys)*.45 ; }
            \begin{scope}[xshift=6 cm, yshift= -3 cm]
                \node[regular polygon, regular polygon sides=12, minimum size=\R cm, draw, name=a, rotate=15] {};
                \node[star, star points=3, star point ratio=\s, minimum size=\R cm, draw, name=b] {};
                \node[regular polygon, regular polygon sides=3, minimum size=\ra cm, rotate=60, name=c] {};
                \node[star, star points=3, star point ratio=1.4, minimum size=\r cm, draw, yscale=\ys, rotate=60, name=d] at (0,\y cm) {};
                \foreach \i in {1,2,3} \foreach \j in {1,2,3} {
                    \pgfmathtruncatemacro{\k}{mod(\i * 4-\j, 12)+1}
                    \draw (b.inner point \i ) -- (a.corner \k ); 
                }
                \foreach \i in {1,2}{
                    \draw (d.inner point \i ) -- (d.inner point 3); \pgfmathtruncatemacro{\k}{mod(\i * 4, 12)+1}
                    \draw (d.inner point \i ) -- (a.corner \k);
                    \draw (d.outer point 2) -- (a.corner \k); 
                }
                \draw (b.inner point 2) -- (a.corner 1);
                \draw[very thick] (a.corner 5) -- (d.inner point 1) -- (d.inner point 3) --(b.inner point 3)-- (a.corner 11) ;
            \end{scope}
        \end{tikzpicture}
    \end{center}
    \caption{The five minimal flag triangulations of $\rr \mathrm{P}^2$, with the highlighted $4$-cycle $C$ with $M(C) = L$.}\label{f:RP2}
\end{figure}

\section{Higher dimensions}

Let $\mathcal{M}$ be the family of minimal PL triangulations of even-dimensional spheres.
\begin{theorem}
    The $\F_2$-Singer Conjecture holds for all $W_L$ for PL spheres $L$ if and only if it holds for all $W_L$ with $L \in \mathcal{M}$.
\end{theorem}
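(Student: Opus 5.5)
The easy direction is immediate, since $\mathcal{M}$ consists of PL spheres. For the converse I would induct on the dimension $d=n-1$ of the sphere $L$, and within a fixed dimension on the number of vertices. The two parities of $d$ need different arguments: even-dimensional spheres are reduced to minimal ones using Theorem~\ref{t:inequality}, and odd-dimensional spheres are handled by a Davis--Okun type Mayer--Vietoris argument that uses the even-dimensional case one dimension lower.

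\emph{Even-dimensional case.} Let $d=n-1$ be even and let $L$ be a flag PL $d$-sphere that is not minimal, with an admissible edge $e$. Admissibility gives $\Lk v\cap\Lk w=\Lk e$, which is the link condition, so $L/e$ is again a flag PL $d$-sphere with fewer vertices. By induction on vertices, $W_{L/e}$ satisfies the $\F_2$-Singer conjecture. The link $\Lk e$ is a flag PL sphere of dimension $d-2$, so $P_{\Lk e}$ is a closed manifold of dimension $d-1=n-2$, which is odd. By the induction on dimension, $W_{\Lk e}$ satisfies the $\F_2$-Singer conjecture, and in odd dimension this means $b^{(2)}_*(W_{\Lk e};\F_2)=0$. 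Theorem~\ref{t:inequality} then gives $b^{(2)}_i(W_L;\F_2)\le b^{(2)}_i(W_{L/e};\F_2)=0$ for all $i\neq n/2$. Since $n$ is odd here, this says $W_L$ is $\F_2$-$L^2$-acyclic. Thus even-dimensional spheres reduce to $\mathcal{M}$, with the inductive hypothesis supplying odd-dimensional links.

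\emph{Odd-dimensional case.} Let $d=n-1$ be odd, so $n$ is even and we must show $b^{(2)}_i(W_L;\F_2)=0$ for $i\neq n/2$. Here I would not contract edges. Instead, for a vertex $v$, I would use the Mayer--Vietoris decomposition $L=(L-v)\cup_{\Lk v}\St v$. The link $\Lk v$ is a flag PL sphere of even dimension $d-1$, so by the previous case (one dimension lower) $W_{\Lk v}$ is $\F_2$-$L^2$-acyclic. The group $W_{\St v}=W_{\Lk v}\times\Z/2$ is then also acyclic, by the K\"unneth formula. Hence $b^{(2)}_*(W_L)=b^{(2)}_*(W_{L-v})$.

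Iterating this, I would remove vertices one at a time, each time checking the link in the current full subcomplex, and argue as in Davis--Okun that $b^{(2)}_i$ vanishes below the middle dimension. The half-dimension bound should come from Poincar\'e--Lefschetz duality for $P_{L-v}$, a manifold with boundary whose boundary pieces are acyclic copies of $P_{\Lk v}$. This duality gives $b_i=b_{n-i}$ for $P_{L-v}$, and combined with the vanishing below the middle it yields vanishing away from $n/2$. All the tools used here (Mayer--Vietoris, K\"unneth, and Poincar\'e duality with $D_{\F_2C_L}$-coefficients) are available over skew fields, so the argument should transfer verbatim to $\F_2$.

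\emph{Main obstacle.} The delicate step is the odd-dimensional case: making the Davis--Okun ``even implies next odd'' argument work over $\F_2$ with only the hypotheses available here. First I would check that their deletion argument uses only exactness of Mayer--Vietoris and duality, and not $L^2$-specific facts such as the Hodge decomposition or the use of $\Q$. The even case is essentially formal given Theorem~\ref{t:inequality} and the PL-invariance of admissible contractions.
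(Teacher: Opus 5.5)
\textbf{Even-dimensional step: correct, same as the paper.} Your easy direction and your even-dimensional step are exactly the paper's argument. You contract an admissible edge, note that $\Lk e$ is an even-dimensional flag PL sphere so $W_{\Lk e}$ is $\F_2$-$L^2$-acyclic by induction on dimension, apply Theorem~\ref{t:inequality}, and induct on the number of vertices.

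\textbf{Odd-dimensional step: the mechanism you sketch does not work.} Here the paper, like you, defers to the Davis--Okun Mayer--Vietoris argument, via Lemma 12 of \cite{aos21}, which it applies directly over $\F_2$. So the Hodge-theory concern you raise is not where the difficulty lies. Write $b_i(W_A)$ for $b^{(2)}_i(W_A;\F_2)$.

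Your single deletion $b_*(W_L)=b_*(W_{L-v})$ is correct. After it, though, the links in the current subcomplex are arbitrary full subcomplexes of even-dimensional spheres. These can have nonzero low-degree Betti numbers: a vertex has $b_0=\tfrac12$, and three pairwise non-adjacent vertices have $b_1=\tfrac12$. So vanishing below the middle cannot be pushed along a vertex filtration.

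What does propagate is vanishing above the middle. For $L\cong S^{2k-1}$, adding a vertex gives $b_i(W_{B\cup v})\le b_i(W_B)+\tfrac12 b_{i-1}(W_{\Lk_L v\cap B})$. So one needs $b_j(W_A)=0$ for $j\ge k$ for every full subcomplex $A$ of a vertex link $N=\Lk_L v\cong S^{2k-2}$. Poincar\'e duality for the closed manifold $P_L$ then handles degrees below $k$; your duality for $P_{L-v}$ is equivalent but unnecessary.

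\textbf{Missing input.} The vanishing just described does not follow from acyclicity of $W_N$, which is all your sketch feeds in. Already for $A=N-u$, the exact sequence of the pair $(\Sigma_N,\Sigma_{N-u})$, together with acyclicity of $W_N$, gives $b_k(W_{N-u})=\tfrac12 b_k(W_{\Lk_N u})$. So you need the $\F_2$-Singer conjecture for the odd-dimensional sphere $\Lk_N u=\Lk_L(uv)$.

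In general, Davis--Okun use duality in the manifold $\Sigma_N$ (with $W_N$ acyclic) to turn $b_j(W_A)$ into $b_{2k-2-j}$ of $\mathcal{U}(W_N,\bigcup_{s\in N-A}K_s)$. They then run a Mayer--Vietoris spectral sequence over this union of mirrors, whose $E^1$-terms involve $b_q(W_{\Lk_N\sigma})$ for simplices $\sigma$. These terms vanish in total degree $<k-1$ because the links of all simplices satisfy the conjecture. This is the content of the paper's remark that a counterexample for $L$ yields a counterexample for the link of some simplex.

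So what is missing is the $\F_2$-Singer conjecture for links of simplices of every dimension and both parities, not just acyclicity of vertex links. Your induction on dimension can supply it, provided the inductive hypothesis is taken to be all lower-dimensional PL spheres and the argument is run in this above-the-middle form.
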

\begin{proof}
    We first assume $L$ is a flag piecewise linear triangulation of $S^{2n}$.
    By induction on $n$, we can assume that the $\F_2$-Singer Conjecture holds for all $W_L$ if $L$ is a piecewise linear triangulation of $S^{2n-2}$.
    Therefore, by Theorem \ref{t:inequality}, contracting an admissible edge in $L$ can only raise $b^{(2)}_{*}(W_{L}; \F_{2})$.

    To get from flag triangulations of $S^{2n}$ to flag triangulations of $S^{2n+1}$, we use the same Mayer--Vietoris arguments from \cite{do01} and \cite{aos21}.
    In particular, iteratively applying \cite{aos21}*{Lemma 12} shows that a counterexample to the $\F_2$-Singer Conjecture for $L$ a flag triangulation of $S^{2n+1}$ would imply a counterexample to the $\F_2$-Singer Conjecture for the link of a simplex in $L$.
\end{proof}

In \cite{aos25} we proved that for (the inverses of) edge subdivisions, the inequality of Theorem~\ref{t:inequality} holds over any field.
In the case where $L$ is an even-dimensional sphere, the $\F$-version of the Singer Conjecture would predict that the above inequality is an equality.
However, given an odd $p$ and odd $n > 7$, we constructed in \cite{aos21} flag PL triangulations $L$ of $S^{n-1}$ so that $P_L$ contradicted the $\Fp$-Singer Conjecture.
By Lutz and Nevo's result~\cite{ln16}, these triangulations can be connected to the $n$-octahedron by a sequence of edge subdivisions and their inverses.
Therefore, at some point in this sequence, $\F_p$-$L^2$-Betti numbers must change, and hence in odd characteristic, we can have a strict inequality even when $L$ is a flag PL triangulation of $S^{n-1}$.
Corollary \ref{t:inequality} fails for other $\F_p$-$L^2$-Betti numbers, even when $L$ is a flag PL triangulation of $S^{n-1}$.
It is still possible that the equality holds over $\F_{2}$ or $\Q$ for spheres, but the following example shows that in general, subdividing edges with $\F$-$L^2$-acyclic links can affect the $\F$-$L^2$-Betti numbers for any field $\F$.
\begin{example}
    Let $L$ be the flag complex obtained from $K_{3,3}$ by suspending an edge $e$.
    The link of $e$ in $L$ is two suspension points, so $W_{\Lk e}$ is $\F$-$L^2$-acyclic.
    Then easy Mayer--Vietoris arguments show that $b^{(2)}_{1}(W_{L}; \F)=b^{(2)}_2(W_{L}; \F)=1/4$, but subdividing $e$ results in a $\F$-$L^2$-acyclic group.
\end{example}
\begin{remark}\label{r:1}
    If $n > 2$, then there are infinitely many minimal flag triangulations of $S^{n}$.
    The easiest way to construct these is by doubling; if $L$ is such a triangulation and $v \in L$, then $D_v(L) := (L - v) \mathop{\cup}\limits_{\Lk v} (L - v)$ is another minimal flag triangulation.
    If $v$ is not a suspension point, $L \neq \{v, v'\} * L'$, then $D_v(L)$ is distinct from $L$.
    For instance, the minimal flag triangulation of $S^3$ constructed in \cite{v22} gives infinitely many minimal flag $S^3$'s, and taking suspensions yields higher-dimensional examples.
    These examples are not interesting from the perspective of $\F$-$L^2$-Betti numbers, since $W_{D_v(L)}$ is an index-two subgroup of $W_L$.

    Karim Adiprasito pointed out to us that the unsolvability of the PL sphere recognition problem yields infinitely many minimal flag triangulations of $S^n$ for $n \geq 5$, and these flag triangulations do not come from iteratively doubling from a finite number of seed minimal triangulations.
    The unsolvability of the problem arises from the fact that no algorithm can decide which of the homology spheres have trivial $\pi_1$.
    If all minimal flag triangulations were iterated doubles of a finite collection of minimal flag spheres, then one could solve the recognition problem by first taking the barycentric subdivision of a homology sphere $S$ so that it becomes flag, then contracting it to a minimal flag $M$, and then finally constructing algorithmically all doubles with the same number of vertices as $M$ and checking for each double if it is $M$.

    Note that one can construct all minimal flag PL triangulations of $S^{n}$ algorithmically, for example by starting with an $n$-octahedron, performing all possible sequences of edge subdivisions and their inverses in the order of increasing length and contracting each resulting sphere to a minimal one.
    By Lutz--Nevo this algorithm will eventually produce all minimal flag PL spheres.
    However, in any such algorithm, one would not know in a finite time if all minimal spheres of a given size were constructed, since otherwise one could solve the recognition problem as above.
    In other words, the number of minimal flag PL $n$-spheres as a function of the number of vertices is not computable.
\end{remark}

\noindent Grigori Avramidi, Mathematical Institute, Andrew Wiles Building, Observatory Quarter, University of Oxford, Oxford, United Kingdom\\
\url{grigori.avramidi@maths.ox.ac.uk}

\noindent Boris Okun, University of Wisconsin-Milwaukee, Department of Mathematical Sciences, PO Box 413, Milwaukee, WI 53201-0413\\
 \url{okun@uwm.edu}

\noindent Kevin Schreve, Louisiana State University, Department of Mathematics, Baton Rouge, LA, 70806\\
 \url{kschreve@lsu.edu}


\begin{bibdiv}
    \begin{biblist}

 \bib{a08}{article}{
    author = {Agol, Ian},
    title = {Criteria for virtual fibering},
    date = {2008},
    issn = {1753-8416},
    journal = {J. Topol.},
    volume = {1},
    number = {2},
    pages = {269\ndash 284},
    url = {https://mathscinet.ams.org/mathscinet-getitem?mr=2399130},
    review = {\MR{2399130}}, }

 \bib{a00}{article}{
    author = {Akita, Toshiyuki},
    title = {Euler characteristics of Coxeter groups, PL-triangulations of closed manifolds, and cohomology of subgroups of Artin groups},
    date = {2000},
    issn = {1753-8416},
    journal = {Journal of the LMS},
    volume = {61},
    number = {2},
    pages = {721\ndash 736},
}

\bib{a70}{article}{
    author = {Andreev, E.~M.},
    title = {On convex polyhedra in {L}oba\v{c}evski\u{i} spaces},
    date = {1970},
    journal = {Math.USSR Sb.},
    volume = {10},
    pages = {413\ndash 440}, 
}

\bib{a70a}{article}{
    author = {Andreev, E.~M.},
    title = {On convex polyhedra of finite volume in {L}oba\v{c}evski\u{i} space},
    date = {1970},
    journal = {Math.USSR Sb.},
    volume = {12},
    pages = {255\ndash 259}, }

\bib{aos21}{article}{
    author = {Avramidi, Grigori},
    author = {Okun, Boris},
    author = {Schreve, Kevin},
    title = {Mod {$p$} and torsion homology growth in nonpositive curvature},
    date = {2021},
    issn = {0020-9910},
    journal = {Invent. Math.},
    volume = {226},
    number = {3},
    pages = {711\ndash 723},
    url = {https://mathscinet.ams.org/mathscinet-getitem?mr=4337971},
    review = {\MR{4337971}}, }

 \bib{aos24}{article}{
    author = {Avramidi, Grigori},
    author = {Okun, Boris},
    author = {Schreve, Kevin},
    title = {Homology growth, hyperbolization, and fibering},
    date = {2024},
    journal = {GAFA},
    volume = {34},
    pages = {303\ndash376}, }

 \bib{aos25}{article}{
    author = {Avramidi, Grigori},
    author = {Okun, Boris},
    author = {Schreve, Kevin},
    title = {Edge subdivisions and the $L^2$-homology of right-angled Coxeter groups},
    volume = {114},
    number = {1},
    journal = {Journal of the LMS},
    date = {2026}, }

\bib{blls14}{article}{
    author = {Bergeron, Nicolas},
    author = {Linnell, Peter},
    author = {L{\"u}ck, Wolfgang},
    author = {Sauer, Roman},
    title = {On the growth of Betti numbers in $p$-adic analytic towers},
    date = {2014},
    journal = {Groups, Geometry and Dynamics},
    volume = {8},
    pages = {311\ndash 329},

}

\bib{bowwzz20}{article}{
    author = {Bibby, Christin},
    author = {Odesky, Andrew},
    author = {Wang, Mengmeng},
    author = {Wang, Shuyang},
    author = {Zhang, Ziyi},
    author = {Zheng, Hailun},
    title = {Minimal flag triangulations of lower-dimensional manifolds},
    journal = {Involve},
    volume = {13},
    date = {2020},
    number = {4},
    pages = {683--703},
    issn = {1944-4176},
    review = {\MR{4190432}}, doi={10.2140/involve.2020.13.683}, }

\bib{dbook}{book}{
    author = {Davis, Michael W.},
    title = {The geometry and topology of Coxeter groups},
    series = {London Mathematical Society Monographs Series},
    volume = {32}, publisher={Princeton University Press, Princeton, NJ},
    date = {2008},
    pages = {xvi+584} }

 \bib{do01}{article}{
    author = {Davis, Michael~W.},
    author = {Okun, Boris},
    title = {Vanishing theorems and conjectures for the {$\ell^2$}-homology of right-angled {C}oxeter groups},
    date = {2001},
    issn = {1465-3060},
    journal = {Geom. Topol.},
    volume = {5},
    pages = {7\ndash 74},
    url = {http://dx.doi.org/10.2140/gt.2001.5.7},
    review = {\MR{1812434 (2002e:58039)}}, }

 \bib{d79}{article}{
    author = {Dodziuk, Jozef},
    title = {$L^2$-harmonic forms on rotationally symmetric Riemannian manifolds},
    date = {1979},
    journal = {Proceedings of the AMS},
    volume = {77},
    number = {3},
    pages = {395\ndash 401}, }

 \bib{fmn94}{article}{
    author = {Fisk, Steve},
    author = {Mohar, Bojan},
    author = {Nedela, Roman},
    title = {Minimal Locally Cyclic Triangulations of the Projective Plane},
    date = {1994},
    journal = {Journal of Graph Theory},
    volume = {18},
    pages = {25\ndash 35}, }

 \bib{hr91}{article}{
    author = {Hartsfield, Nora},
    author = {Ringel, Gerhard},
    title = {Clean triangulations},
    date = {1991},
    issn = {0025-5521},
    journal = {Combinatorica},
    volume = {11},
    pages = {145\ndash 155}, }

 \bib{k20}{article}{
    author = {Kielak, Dawid},
    title = {Residually finite rationally solvable groups and virtual fibring},
    date = {2020},
    issn = {0894-0347},
    journal = {J. Amer. Math. Soc.},
    volume = {33},
    number = {2},
    pages = {451\ndash 486},
    url = {https://doi.org/10.1090/jams/936},
    review = {\MR{4073866}}, }

\bib{l94a}{article}{
    author = {L{\"u}ck, Wolfgang},
    title = {${L}\sp 2$-{B}etti numbers of mapping tori and groups},
    date = {1994},
    issn = {0040-9383},
    journal = {Topology},
    volume = {33},
    number = {2},
    pages = {203\ndash 214},
    review = {\MR{95g:58235}}, }

 \bib{ln16}{article}{
    author = {Lutz, Frank~H.},
    author = {Nevo, Eran},
    title = {Stellar theory for flag complexes},
    date = {2016},
    issn = {0025-5521},
    journal = {Math. Scand.},
    volume = {118},
    number = {1},
    pages = {70\ndash 82},
    url = {https://doi.org/10.7146/math.scand.a-23297},
    review = {\MR{3475102}}, }

\bib{mm92}{article}{
    author = {Malni{\v c}, Aleksander},
    author = {Mohar, Bojan},
    title = {Generating locally cyclic triangulations of surfaces},
    date = {1992-11},
    issn = {0095-8956},
    journal = {Journal of Combinatorial Theory, Series B},
    volume = {56},
    number = {2},
    pages = {147\ndash 164},
    url = {http://dx.doi.org/10.1016/0095-8956(92)90015-P}, }

 \bib{mn95}{article}{
    author = {Malni\v{c}, A.},
    author = {Nedela, R.},
    title = {{$k$}-minimal triangulations of surfaces},
    date = {1995},
    issn = {0862-9544,1336-0310},
    journal = {Acta Math. Univ. Comenian. (N.S.)},
    volume = {64},
    number = {1},
    pages = {57\ndash 76},
    review = {\MR{1360987}}, }

\bib{m19}{thesis}{
    author = {Melzer, Sebastian},
    title = {$k$-irreducible triangulations of $2$-manifolds}, type={Ph.D. Thesis},
    date = {2019}, 
    eprint={https://nbn-resolving.org/urn:nbn:de:bsz:14-qucosa2-356439} }

 \bib{v22}{article}{
    author = {Venturello, Lorenzo},
    title = {On flag spheres with few equators},
    date = {2022}, eprint={https://arxiv.org/pdf/2203.10003.pdf}, }

 \bib{v15}{article}{
    author = {Volodin, V.~D.},
    title = {Combinatorics of flag simplicial 3-polytopes},
    date = {2015},
    issn = {0042-1316},
    journal = {Uspekhi Mat. Nauk},
    volume = {70},
    number = {1(421)},
    pages = {181\ndash 182},
    url = {https://doi.org/10.4213/rm9645},
    review = {\MR{3353120}}, }

\end{biblist}
\end{bibdiv}
\end{document}